\theoremstyle{plain}
\newtheorem{lemma}{Lemma}[section]
\newtheorem{theorem}[lemma]{Theorem}
\newtheorem{corollary}[lemma]{Corollary}
\newtheorem{proposition}[lemma]{Proposition}
\theoremstyle{definition}
\newtheorem{definition}[lemma]{Definition}
\newtheorem{remark}[lemma]{\sc Remark}
\newcommand{\ac}{\scriptstyle \text{\rm !`}}
\DeclareMathAlphabet{\pazocal}{OMS}{zplm}{m}{n}
\def\calC{\pazocal{C}}
\def\calE{\pazocal{E}}
\def\calF{\pazocal{F}}
\def\calG{\pazocal{G}}
\def\calK{\pazocal{K}}
\def\calL{\pazocal{L}}
\def\calM{\pazocal{M}}
\def\calO{\pazocal{O}}
\def\calT{\pazocal{T}}
\def\calV{\pazocal{V}}
\DeclareMathOperator{\Lie}{Lie}
\DeclareMathOperator{\Com}{Com}
\DeclareMathOperator{\HyperCom}{HyperCom}
\DeclareMathOperator{\Grav}{Grav}
\DeclareMathOperator{\Gerst}{Gerst}
\DeclareMathOperator{\Conf}{Conf}
\DeclareMathOperator{\Hom}{Hom}
\DeclareMathOperator{\Der}{Der}
\DeclareMathOperator{\colim}{colim}
\DeclareMathOperator{\CGK}{\calC\calG\calK}
\DeclareMathOperator{\bCGK}{\overline{\calC\calG\calK}}
\DeclareMathOperator{\hCGK}{\widehat{\calC\calG\calK}}
\DeclareMathOperator{\D}{\mathsf{D}}
\DeclareMathOperator{\fD}{\mathsf{fD}}
\DeclareMathAlphabet{\mathbbold}{U}{bbold}{m}{n}
\def\k{\mathbbold{k}}
\begin{document}

\title{Operads of moduli spaces of points in $\mathbb{C}^d$ revisited}
\author{Vladimir Dotsenko}
\address{Institut de Recherche Math\'ematique Avanc\'ee, UMR 7501, Universit\'e de Strasbourg et CNRS, 7 rue Ren\'e-Descartes, 67000 Strasbourg, France}
\email{vdotsenko@unistra.fr}

\author{Eduardo Hoefel}
\address{Departamento de Matemática, Universidade Federal do Paraná, C.P. 019081, 81531-990 Curitiba, Paraná, Brazil}
\email{hoefel@ufpr.br}

\author{Sergey Shadrin}
\address{Korteweg-de Vries Institute for Mathematics, University of Amsterdam, P. O. Box 94248, 1090 GE Amsterdam, The Netherlands}
\email{s.shadrin@uva.nl}

\author{Grigory Solomadin}
\address{Department of Mathematics and Computer Science, Philipps-Universität Marburg, Hans-Meerwein-Strasse 6,
D-35043 Marburg}
\email{grigory.solomadin@gmail.com}

\date{}

\begin{abstract} 
We study the operad structure on the homology of moduli spaces of pointed rooted trees of $d$-dimensional projective spaces, introduced by Chen, Gibney and Krashen a couple of decades ago. We describe this operad by generators and relations, show that it is homotopy Koszul, exhibit a Givental-type action on representations of that operad, and prove that this operad represents the homotopy quotient of the operad of chains of $S^1$-framed little $2d$-disks by its natural circle action. Our approach also sheds new light on the $d=1$ case, revealing a new combinatorial way to write the original Givental formulas.  
\end{abstract}

\dedicatory{To the memory of Yuri Ivanovich Manin}
\maketitle

\tableofcontents

\section*{Introduction}

In late 1960s, a remarkable mathematical structure emerged independently in many different corners of the world: various versions of it were considered by Boardman and Vogt \cite{MR236922} under the name ``categories of operators in standard form'', by Artamonov \cite{MR0237408} in universal algebra under the name ``clones of multilinear operations'', by Kelly \cite{MR340372,MR340371} under the name ``club'', and, last but not the least, by May \cite{MR0420610} who gave it a beautiful name ``operad''. An operad abstracts the properties exhibited by the collection $\{\mathrm{Map}(X^n,X)\}$ under substitutions of multi-variable operations into each other and permutations of arguments. 

\subsection*{Some celebrated topological operads}
Among operads of topological origin, there are two of particular notoriety: the operad of little disks $\D_2$, known already to Boardman and Vogt \cite{MR236922} who however used cubes instead of disks, and the Deligne--Mumford operad $\overline{\calM}$ made of compactifications of the moduli spaces of genus zero curves with marked points, where the operad structure seems to have first appeared in print in the work of Beilinson and Ginzburg \cite{MR1159447}. These operads turn out to be related to each other and to the compactifications of configuration spaces of Axelrod--Singer \cite{MR1258919} and of Fulton--MacPherson \cite{MR1259368}; one may say that the work of many authors who unravelled various aspects of that relationship (such as Beilinson and Ginzburg \cite{MR1159447}, Getzler and Jones \cite{getzler1994operadshomotopyalgebraiterated}, Ginzburg and Kapranov \cite{MR1301191}, Kontsevich \cite{MR1718044}, and Markl \cite{MR1684178} to name but a few), was an important driving force behind what Loday \cite{MR1423619} called the ``renaissance of operads'' of the 1990s. The main aspect of the relationship between these operads, as understood nowadays, can be summarized by the diagram
 \[
 \xymatrix@M=6pt{
    \overline{\calM}\ar@{<--}^{/_h S^1}[rr]  & &  \fD_2  \\ 
    \calM\ar@{^{(}->}[u]\ar@{<<-}^{/ S^1}[rr]  & & \D_2 \ar@{^{(}->}[u]     
 }
 \]
The two objects in this diagram whose definitions we have not yet recalled are the operad of framed little disks $\fD_2$, which is the extension $\D_2\rtimes S^1$ of the operad $\D_2$ by the natural circle action on it, and the open moduli spaces of genus zero curves with marked points $\calM=\big\{\calM_{0,n+1}\big\}$. The relationships between these objects are as follows. Each open moduli space $\calM_{0,n+1}$ is homotopic to the quotient of the component $\D_2(n)$ by the circle action. That open moduli space is contained in its compactification $\overline{\calM}_{0,n+1}$, and the operad structure on $\overline{\calM}$ comes from particular geometric features of the boundary divisor. Finally, the operad $\overline{\calM}$ turns out to represent the homotopy quotient of $\fD_2$ by the natural action of $S^1$.

\subsection*{New results}
In this paper, we exhibit a generalization of all the above phenomena for arbitrary even dimension. For that, we have to replace the open moduli spaces $\calM_{0,n+1}$ and their Deligne--Mumford compactifications $\overline{\calM}_{0,n+1}$ by the spaces $\CGK_d(n)$ that parametrize configurations of $n$ points in $\mathbb{C}^d$ modulo translations and homotheties, as well as their compactifications $\bCGK_d(n)$ that parametrize moduli of stable rooted trees of $d$-dimensional projective spaces, where each projective space $\mathbb{C}P^d$ has a distinguished hyperplane $H$ and a few marked points in $\mathbb{C}P^d\setminus H$; these geometric objects were first studied by Chen, Gibney and Krashen \cite{MR2496455}, which we reflect in our choice of notation. In the paper \cite{MR3068956}, which as the title of the present paper suggests, was one of important inspirations for some parts of our work, Westerland gave a description of the operad structure on $\{H_{\bullet-1}(\CGK_d)\}$, analogous to the gravity operad of Getzler \cite{MR1284793,MR1363058}, and remarked ``We expect this computation to be useful in determining the structure of the homology of the operad $\{H_{\bullet}(\overline\CGK_d)\}$''. We bring this expectation to life. The diagram 
 \[
 \xymatrix@M=6pt{
    \bCGK_{d}\ar@{<<--}^{/_h S^1}[rr]  & &  \D_{2d}\rtimes S^1   \\ 
    \CGK_d\ar@{^{(}->}[u]\ar@{<<-}^{/ S^1}[rr]  & & \D_{2d} \ar@{^{(}->}[u]     
 }
 \]
looks deceptively similar, yet it comes with a lot of surprises. First, the ``usual'' framed little disk operad, first studied in detail by Salvatore and Wahl \cite{MR1989873}, is $\fD_m:=\D_m\rtimes SO(m)$, where the full rotational symmetry of the disk is used, so it is not at all apparent why considering only the diagonal embedding $S^1=SO(2)\hookrightarrow SO(2d)$ is the right thing to do. Moreover, the left column of this diagram behaves much more subtly on an algebraic level for $d>1$, primarily due to the fact that the mixed Hodge structure of $\CGK_d(n)$ is not pure for such values of $d$; as we shall see, this is closely related to the fact that the operad $\D_{2d}\rtimes S^1$ is not formal, in contrast with the operad $\D_{2d}\rtimes SO(2d)$ that is shown to be formal by Khoroshkin and Willwacher \cite{khoroshkin2025realmodelsframedlittle}. The homotopy quotient result hints that there is a rich group of symmetries \emph{à la} Givental \cite{MR1901075,MR2115767} acting on representations of the operad 
 \[
\HyperCom_d:=H_\bullet(\bCGK_{d}),    
 \]
and indeed we were able to define such an action. Our approach has two noteworthy features. First of all, we were able to give a definition of Givental symmetries in higher generality, producing an infinitesimal action of the Lie algebra $\Der(\calO)[[z]]$ (as opposed to its Lie subalgebra $\calO(1)[[z]]$ for which the action is usually defined) on 
 \[
\Hom_{\mathrm{Op}}(\HyperCom_d,\calO).    
 \]
Second, our approach allows for new simple formulas even in the well understood $d=1$ case: in a sense, a lot of computations with the Givental formulas use nothing but the combinatorics of Boolean lattices. Similarly to how the Givental action in the classical case was shown by two of the authors and Vallette~\cite{MR3019721} to encode the Grothendieck--Koszul definition of the differential order of an operator on a commutative associative algebra, in our case we unravel a new notion of a ``homotopy circle action of differential order $\frac{d+1}{d}$''. It is worth indicating connections between our work and two other recent papers: a relationship between $\bCGK_{d}$ and $\D_{2d}\rtimes S^1$ in the context of logarithmic geometry was exhibited by Lindström~\cite{lindström2025loggeometricmodelslittle}, while the work of Nesterov \cite{nesterov2025hilbertschemespointsfultonmacpherson} defines $\psi$-classes on Fulton--MacPherson spaces, which, once restricted to a suitable stratum, can be shown to lead to an alternative definition of the Givental action.    
 
It turns out that our results admit an interesting limiting behaviour as $d\to\infty$, which in fact addresses one of the original motivations for our work. In \cite{MR4580527}, two of the authors of the present paper and Tamaroff considered the following question. Suppose that $A$ is a commutative differential graded algebra equipped with an action of the algebra $H_\bullet(S^1)$ such that the fundamental class of $S^1$ acts as a differential operator of order $k\in\mathbb{N}$, and suppose that this action is homotopically trivialized. What kind of structure does this induce on the homology of $A$? For $k=2$, this structure is precisely a structure of an algebra over the homology of the Deligne--Mumford operad mentioned above, but in fact already for the simpler case $k=1$ one obtains a very rich structure: there are infinitely many binary operations of all possible even degrees satisfying together certain ``associativity equations''. Analyzing those equations, we felt that there should be a topological operad inducing those equations, with the components being certain tree-shaped wedges of copies of $\mathbb{C}P^\infty$, particular cases of $\Gamma$-wedges of Anick~\cite{MR764587} and of the spaces studied by Davis and Januszkiewicz~\cite{MR1104531}. As a byproduct of our work, we were able to confirm this expectation.  

\subsection*{Further questions}

In~\cite{MR4580527} two authors of the present paper and Tamaroff emphasized that the classical $d=1$ case is a part of an ``Arnold's trinity of algebraic gravity packages'': if one replaces the symmetric operad of Deligne--Mumford spaces by the nonsymmetric operad of toric varieties of associahedra~\cite{MR4072173} or by twisted associative algebra of Losev--Manin moduli spaces~\cite{MR1786500}, there is a suitable analogue of the operad of little disks and suitable diagrams of algebraic and geometric objects and their interrelationships. It seems likely that the results of the present paper can be generalized to extend this trinity for arbitrary $d$ (in fact, the corresponding higher dimensional Losev--Manin spaces appear in recent work of Gallardo, González-Anaya, González, and Routis \cite{gallardo2023higherdimensionallosevmaninspacesgeometry}); however, checking all the necessary details would make the present paper too long, so we chose to not discuss this extension here. 

We also wish to raise the following question. Examining the proof of Theorem \ref{th:CGK}, one sees that the beginning of our proof works \emph{mutatis mutandis} in the full generality of the Feichtner--Yuzvinsky operad of Coron \cite[Sec.~4.1.3]{MR4948093}. It is natural to ask whether that theorem holds in this level of generality; we leave exploring this direction to an interested reader. 

Finally, an important question that we have not addressed yet is to exhibit interesting algebras over the operad $H_\bullet(\bCGK_{d})$. For $d=1$, the most appealing class of examples comes from genus zero Gromov--Witten invariants, see, e.g., the beautiful paper of Manin \cite{MR1701927}. It is not entirely clear what are the most meaningful examples for $d>1$. One hint of promising direction to explore is proposed by Manin and Marcolli \cite[Sec.~2.6]{MR3261871}; they argue that for $d=3$, the space of trees of pointed three-dimensional projective spaces as a certain ``multiverse landscape'', where each individual $\mathbb{C}P^3$s is viewed as a twistor space of a $4$-dimensional complex spacetime. Note that the Chen--Gibney--Krashen spaces use both points and hyperplanes in $\mathbb{C}P^d$, and for $d=3$ we may use the Penrose transform (see Manin's monograph \cite[Sec.~1.3]{MR1632008} for details), under which points and hyperplanes  to ``$\alpha$-planes'' and ``$\beta$-planes'' of the Klein--Plücker quadric in $\mathbb{C}P^5=\mathbb{P}(\Lambda^2(\mathbb{C}^4))$, so that a choice of a hyperplane $H$ and several marked points in $\mathbb{C}P^3\setminus H$ corresponds to choosing several $\alpha$-planes that do not meet a chosen $\beta$-plane. Gluing together copies of $\mathbb{C}P^3$ into a rooted tree,  while not a twistor space itself, may be deformed to a twistor space \emph{à la} Donaldson--Friedman~\cite{MR994091}. Unravelling the kind of invariants arising from this viewpoint is a very interesting challenge that will be addressed elsewhere.

\subsection*{Structure of the paper}

This paper is organized as follows. In Section~\ref{sec:rec}, we give necessary recollections on operads and De Concini--Procesi wonderful compactifications. In Section~\ref{sec:DM-CGK}, we discuss the Chen--Gibney--Krashen spaces from the point of view of wonderful compactifications, in particular spelling out the induced operad structures on their (co)homology. In Section~\ref{sec:gravity}, we study in detail the $d$-dimensional version of the gravity operad; in particular, we give a presentation of that operad by generators and relations (Theorem \ref{th:GravIso}), and prove that it is Koszul. In Section~\ref{sec:CGK}, we give a presentation of the operad 
$\HyperCom_d$ by generators and relations (Theorem \ref{th:CGK}), and prove that it is a homotopy Koszul operad whose Koszul dual is obtained from the gravity operad by imposing a nontrivial higher structure (Theorem~\ref{th:MHShigher}). In Section~\ref{sec:S1framed}, we discuss the $S^1$-framed version of the operad $\D_{2d}$, and exhibit a small model of that operad (Proposition\ref{prop:d-model}), which plays the role of the operad of Batalin--Vilkovisky algebras for $d>1$. In Section~\ref{sec:Givental}, we introduce a Givental-type action on representations of the operad $\HyperCom_d$ (Proposition \ref{prop:GiventalDer}), explain how it is related to a ``more standard'' action defined using suitable $\psi$-classes, and use it to define ``differential operators of order $\frac{d+1}{d}$'' (Proposition~\ref{prop:GiventalDO}). In Section~\ref{sec:CGKHomotopyQuotient}, we prove that the Chen--Gibney--Krashen operad represents the homotopy quotient of the operad $\D_{2d}\rtimes S^1$ by its natural circle action (Theorem \ref{th:homotopy-q}). Finally, in Section~\ref{sec:DJcolimit} we discuss what happens as $d\to\infty$ and, in particular, express the Davis--Januszkiewicz spaces as colimits of Chen--Gibney--Krashen spaces, up to a homotopy (Theorem \ref{thm:spheq}).

\subsection*{Conventions}

We use the ``topologist's notation'' $\underline{n}$ for $\{1,2,\ldots,n\}$. For $I\subseteq\underline{n}$, we denote $I^c:=\underline{n}\setminus I$. All vector spaces are defined over the ground field $\k$ of zero characteristic. Vector spaces we consider usually carry homological degrees, and we view them as chain complexes with zero differential; an important consequence of the presence of homological degrees is the Koszul sign rule for symmetry isomorphisms of tensor products. All chain complexes are homologically graded, with the differential of degree $-1$; in particular, cohomology of a topological space is placed in \emph{non-positive} degrees.  Homology and cohomology where coefficients are not specified is always assumed to be computed with rational coefficients. For an oriented manifold $M$, we denote by $\mathrm{PD}\colon H_c^\bullet(M)\to H_{\dim(M)+\bullet}(M)$ the Poincaré duality isomorphism (note that the plus sign in $\dim(M)+\bullet$ comes from the abovementioned homological degree convention).

To handle suspensions of chain complexes, we use the formal symbol $s$ of degree $1$, and let $sC_\bullet=\k s\otimes C_\bullet$; similarly, the symbol $s^{-1}$ implements desuspensions. The suspension operad $\mathcal S$ and the desuspension operad $\mathcal S^{-1}$ as the linear species with components given by the respective formulas 
 \[
\mathcal S(n)=\Hom_\k((\k s)^{\otimes n}, \k s)\text{ and }\mathcal S^{-1}(n)=\Hom_\k((\k s^{-1})^{\otimes n}, \k s^{-1})    
 \]
with the operad structures given by substitution of multilinear maps into each other. The operadic suspension and desuspension are then given by the Hadamard product with $\mathcal S$ and $\mathcal S^{-1}$ respectively; note that the directions of degree shifts in operadic (de)suspension are opposite to those of (de)suspensions of chain complexes.

\subsection*{Conflicts of notation} Some of the notation of this paper clashes with various choices of notation present in the literature. First of all, there seems to be no uniform choice of notation for generators of Chow rings of De Concini--Procesi wonderful models. We choose to follow the convention of Pagaria--Pezzoli \cite{MR4675068} where the ``De Concini--Procesi generators'' are denoted by $x_g$ and the ``Yuzvinsky generators'' are denoted by $\sigma_g$ (some history of these generators is given in Section~\ref{sec:rec} below). Furthermore, Westerland in \cite{MR3068956} defines the $d$-dimensional version of the gravity operad of Getzler \cite{MR1284793,MR1363058} which he denotes $\Grav_d$, and then identifies defines an unnamed (though arguably more important) operad by removing from $\Grav_d$ extra unary operations. We decided to denote (a $2d$-fold suspension of) that latter operad $\widetilde\Grav_d$, and to denote the Koszul dual of the desuspension of $\widetilde\Grav_d$ by $\widetilde\HyperCom_d$, this way not departing extremely far from the notation of Getzler for $d=1$. Furthermore, we chose to denote the homology of the operad of Chen--Gibney--Krashen spaces by $\HyperCom_d$; thus, overall, the operads $\HyperCom_d$ and $\Grav_d$ are not really related in very direct way for $d>1$. It is also worth noting that two of the authors of the present paper and Tamaroff defined in~\cite{MR4580527}, for each integer $k\ge 1$, the operad $k\text{-}\HyperCom$ as the operad representing homotopy quotient by the circle action of the operad of commutative associative algebras with a circle action that acts as an operator of differential order $k+1$. From the point of view of this article, one should perhaps use the notation $\frac{d+1}{d}\text{-}\HyperCom$ for the operad $\HyperCom_d$, but we chose to emphasize the complex dimension of the question rather than push the philosophy of the differential order this far.  

\subsection*{Acknowledgements}

This work was supported by the ANR project HighAGT (ANR-20-CE40-0016, V.D. and G.S.), by the Math-AmSud project HHMA (23-MATH-06, V.D. and E.H.), by the Dutch Research Council project OCENW.M.21.233 (S.S.), by a DFG Walter Benjamin Fellowship (project number 561158824, G.S.), by University of Strasbourg Institute for Advanced Studies (E.H.) and by Institut Universitaire de France (V.D.).
We thank Basile Coron, Imma Gálvez Carrillo, Joana Cirici, Mikhail Kapranov, Sergei Merkulov, Andrew Tonks, Bruno Vallette, and Alexander Voronov for useful discussions of various results of the paper. Special thanks are due to Clément Dupont for pointing a gap in the proof of Proposition \ref{prop:split}, as well as for explaining how to fix that gap, and to Thomas Willwacher for detailed discussions of formality and non-formality of various operads. Finally, we would like to acknowledge the extraordinary impact that work of Yuri Ivanovich Manin had on the development of several research areas where this paper belongs, and dedicate our work to his memory.

\section{Recollections}\label{sec:rec}

\subsection{Operads and operadic Gröbner bases}
We refer the reader to the monograph \cite{LV} for a systematic treatment of operads. In discussing operads, we use the language of combinatorial species \cite{MR2724388,MR1629341}. In particular, we mostly think of an operad on a species $\calO$ in terms of partial compositions \cite[Sec.~5.3.7]{LV}
 \[
\circ_\star\colon\calO(I\sqcup\{\star\})\otimes\calO(J)\to\calO(I\sqcup J),     
 \]
which, if one uses the notion of the derivative species $\partial(\calO)$ and the Cauchy product of species $\otimes$, assemble into a map
 \[
\circ_\star\colon\partial(\calO)\otimes\calO\to\calO.
 \]
The ``sequential'' and the ``parallel'' axioms satisfied by the parallel compositions are given by the commutative diagrams
 \[
 \xymatrix@M=6pt{
\partial(\calO)\otimes\partial(\calO)\otimes\calO\ar@{->}^{\mathrm{id}\otimes \circ_\star}[rr] \ar@{->}_{\mu\otimes \mathrm{id}}[d] & & \partial(\calO)\otimes\calO \ar@{->}^{\circ_\star}[d]  \\ 
\partial(\calO)\otimes\calO\ar@{->}_{\circ_\star}[rr]  & & \calO      
 } 
 \]
and  
 \[
 \xymatrix@M=6pt{
\partial(\partial(\calO))\otimes\calO\otimes\calO\ar@{->}^{(\rho\otimes\mathrm{id})(\mathrm{id}\otimes \sigma) }[rr] \ar@{->}_{\rho\otimes \mathrm{id}}[d] & & \partial(\calO)\otimes\calO \ar@{->}^{\circ_{\star_1}}[d]  \\ 
\partial(\calO)\otimes\calO\ar@{->}_{\circ_{\star_2}}[rr]  & & \calO      
 } .
 \]
Here the structure morphisms
 \[
\mu\colon\partial(\calO)\otimes\partial(\calO)\to\partial(\calO) \quad\text{ and }\quad \rho\colon\partial(\partial(\calO))\otimes\calO\to\partial(\calO)
 \]
are obtained by applying $\partial$ to $\circ_\star$, and $\sigma\colon\calO\otimes\calO\cong\calO\otimes\calO$ is the symmetry isomorphism of the Cauchy product.

Some of our arguments rely in a very substantial way on the theory of Gröbner bases for operads \cite{BD,MR2667136}. To assist the reader who is not fluent in the language of Gröbner bases, we summarize the fundamental features of that theory. In general, a theory of Gröbner bases is available when one considers an algebraic structure where free objects are ``combinatorial'': each free object has a $\k$-basis of ``monomials'', and the result of applying any structure operation to monomials is again a monomial. An ordering of monomials is said to be admissible if it is a total well ordering, and if each structure operation is an increasing function of its arguments: replacing one of the monomials to which one applies that structure operation by a greater one increases the result. Given an admissible ordering, one can associate to any ideal $I$ of the free object the ``ideal of leading terms'' $LT(I)$ which is spanned by leading terms of elements of $I$ with respect to the given ordering. 
The importance of that notion is that the cosets of monomials that do not belong to $LT(I)$ form a basis in the quotient by the ideal $I$. 
A Gröbner basis of an ideal $I$ is a subset $G\subset I$ whose leading terms generate the ideal $LT(I)$; knowing a Gröbner basis of an ideal $I$ is thus instrumental in efficiently working with the quotient by that ideal. Note that this formalism does not quite apply in the case for arbitrary symmetric operads (where structure operations are compositions along arbitrary trees), but there exist a notion of a ``shuffle operad'' which forgets the symmetric group action on an operad, retaining however all the essential structure, and free shuffle operads admit monomial bases.

\subsection{Wonderful models of subspace arrangements and their operad-like structures}\label{sec:dcp}

Let us recall the basics of the general theory of wonderful models of subspace arrangements was developed by De Concini and Procesi~\cite{MR1366622}. 

Suppose that $V$ is a vector space. Let $\calL$ be a collection of subspaces of $V$ closed under intersections, and let $\calC$ be a collection of subspaces of $V^*$ that are equations of the elements of $\calL$, so that each $H\in\calC$ corresponds to 
 \[
H^\bot=\{x\in V\colon f(x)=0 \text{ for all } f\in H\}\in\calL.     
 \]
Recall that a set $\calG\subseteq\calC$ is said to be a \emph{building set} for $\calL$ if every element $X\in\calC$ is the direct sum of the maximal elements of $\calG$ that it contains. We shall assume that $\calG$ contains $V^*$. Once we fix a building set $\calG$, we can define a \emph{nested set} relative to $\calG$ as a subset $\{G_1,\ldots,G_k\}\subseteq\calG$ such that for all subsets $\{G_{i_1},\ldots,G_{i_m}\}$ of pairwise non-comparable (with respect to inclusion) elements, then  $G_{i_1}+\cdots+G_{i_m}\notin\calG$, and the sum $G_{i_1}+\cdots+G_{i_m}$ is direct. This definition gives rise to the definition of the \emph{nested set complex} $N(\calL,\calG)$, the simplicial complex with vertices $\calG$ whose faces are nested sets relative to $\calG$. 

Given a building set $\calG$ of a subspace arrangement $\calL$, one may define a \emph{wonderful model} $Y_{\calL,\calG}$ as the closure in 
 \[
\prod_{G\in\calG} \mathbb{P}(V/G^\bot)     
 \]
of $\mathbb{P}(V\setminus\bigcup_{G\in \calG} G^\bot)$. De Concini and Procesi \cite{MR1366622} proved that $Y_{\calL,\calG}$ is smooth, with the complement of $\mathbb{P}(V\setminus\bigcup_{G\in \calG} G^\bot)$ being a normal crossing divisor whose irreducible components $D_G$ correspond to elements $G\in\calG\setminus \{V^*\}$. They also computed the cohomology ring of $Y_{\calL,\calG}$, for an  arrangement of subspaces $\calL$ in a vector space $V$ over complex numbers. The answer is described as follows. Recall that to any simplicial complex $\Gamma$ on $m$ vertices one can associate \cite{MR725505} a commutative ring called the \emph{Stanley--Reisner ring} and denoted $SR(\Gamma)$. It is defined by the formula
 \[
SR(\Gamma)=\mathbb{Z}[x_1,\ldots,x_m]/(x_{i_1}\cdots x_{i_k}\colon \{i_1,\ldots,i_k\}\notin\Gamma).     
 \]  
In particular, we have
 \[
SR(N(\calL,\calG))=\mathbb{Z}[x_G\colon G\in\calG]/(x_{G_1}\cdots x_{G_k}\colon \{G_1,\ldots,G_k\}\notin N(\calL,\calG)).     
 \] 
In \cite[Th.~5.2]{MR1366622}, De Concini and Procesi establish a ring isomorphism
 \[
H^\bullet(Y_{\calL,\calG},\mathbb{Z})\cong SR(N(\calL,\calG))/\left(\prod_{i=1}^kx_{G_i}\bigl(\sum_{H\supseteq G}x_H\bigr)^{d_{G_1,\ldots,G_k,G}} \colon G_i\subsetneq G, \{G_1,\ldots,G_k\}\in N(\calL,\calG)\right) ,    
 \]
where $d_{G_1,\ldots,G_k,G}=\dim G-\dim \sum_{i=1}^k G_i$. It is worth noting that the generators $x_G$ for $G\ne V^*$ are the Poincaré duals of the divisors $D_G$, and the generator $x_{V^*}$ is the pullback of the hyperplane class of $\mathbb{P}(V)=\mathbb{P}(V/(V^*)^\bot)$.

It will also be useful for us to consider another set of generators of the cohomology rings, namely the generators $\sigma_G$, $G\subseteq\calG$, defined by the formula 
 \[ 
\sigma_G=\sum_{H\supseteq G} x_H.      
 \]
These elements appear already in the original work of De Concini and Procesi \cite[p.~486]{MR1366622}, where it is indicated that $\sigma_G$ is the pullback of the hyperplane class in $H^2(\mathbb{P}(V/G^\bot))$ under the canonical map 
 \[
Y_{\calL,\calG} \to \mathbb{P}(V/G^\bot).    
 \] 
It seems that the importance of viewing these elements as a different choice of generators goes back to the work of Yuzvinsky \cite[Sec.~3]{MR1881024}. (More recently, they have been studied for Chow rings of matroids by Backman, Eur and Simpson \cite{MR4780488} under the name ``simplicial generators'', and for Chow rings of polymatroids by Pagaria and Pezzoli in \cite{MR4675068}.) It is known that under that  change of generators, we have
 \[
SR_\sigma(N(\calL,\calG))\cong\mathbb{Z}[\sigma_G\colon G\in\calG]/((\sigma_{G_1}-\sigma_G)\cdots (\sigma_{G_k}-\sigma_G)\colon \{G_1,\ldots,G_k\}\notin N(\calL,\calG), G=G_1+\cdots+G_k),      
 \]
and 
 \[
H^\bullet(Y_{\calL,\calG},\mathbb{Z})\cong SR_\sigma(N(\calL,\calG))/\left(\prod_{i=1}^k(\sigma_{G_i}-\sigma_G)\sigma_G^{d_{G_1,\ldots,G_k,G}} \colon  G_i\subsetneq G, \{G_1,\ldots,G_k\}\in N(\calL,\calG)\right) .    
 \]

An important ingredient in the arguments of De Concini and Procesi is the fact that the boundary of each wonderful model is made of products of smaller wonderful models \cite[Th.~1.4]{MR3687430}. It was first observed by Rains \cite{MR2746338} that this gives an operad-like structure on all wonderful models considered at the same time. For us, a slightly different viewpoint will prove advantageous, namely that of the Feynman category $\mathfrak{LBS}$ of \emph{built lattices} of Coron \cite{MR4948093}. Within this formalism, a generalized operad $\calO$ has components $\calO(\calL,\calG)$, where the ``arity'' $(\calL,\calG)$ is a pair consisting of a geometric lattice $\calL$ and its building set $\calG$ \cite[Sec.~2]{MR4948093}. Infinitesimal compositions 
 \[
\circ_\star\colon\calO(I\sqcup\{\star\})\times\calO(J)\to\calO(I\sqcup J)    
 \]
of an operad are generalized to the structure operations 
 \[
\calO(\calL_G,\calG_G)\times \calO(\calL^G,\calG^G)\to\calO(\calL,\calG),     
 \]
that should be available for every $G\in\calG$. Here we use the arrangements 
 \[
\calL_G:=\{H^\bot\in\calL\colon G\subseteq H\}, \quad \calL^G:=\{H^\bot\in\calL\colon H\subseteq G\}   
 \]
and their building sets
 \[
\calG_G:=\{H+G\colon H\in\calG, H\not\subseteq G\}, \quad \calG^G:=\{H \in\calG\colon H\subseteq G\},     
 \]
which are particular cases of induced building sets \cite[Def.~2.10]{MR4948093}. These maps satisfy certain relations \cite[Prop.~3.19]{MR4948093} that generalize the usual relations satisfied by infinitesimal compositions in an operad. (Strictly speaking, for arbitrary arrangements it is not enough to consider geometric lattices, and one needs a generalization of the formalism of Coron to polymatroids \cite[Sec.~7.2]{MR4948093}, but throughout this paper all lattices we consider turn out to be geometric, so we may allow ourselves to not over-complicate the situation.)

Let us record a simple result which is at the heart of the definitions of \cite{MR4948093}, though it is not stated or proved as such.
\begin{proposition}
For every $G\in\calG$, there is a map 
 \[
\prod_{H\in\calG_G} \mathbb{P}(V/H^\bot) \times      
\prod_{H\in\calG^G} \mathbb{P}(G^\bot/H^\bot)   
\to    
\prod_{H\in\calG} \mathbb{P}(V/H^\bot)    
 \]
which restricts to a map  
\begin{equation}\label{eq:operad-DCP}
Y_{\calL_G,\calG_G}\times Y_{\calL^G,\calG^G}\to Y_{\calL,\calG}.     
\end{equation}
These maps satisfy the axioms of an $\mathfrak{LBS}$-operad.  
\end{proposition}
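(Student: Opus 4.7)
The plan is to construct the composition map at the level of the ambient products of projective spaces, then argue that it restricts to a regular morphism on the wonderful models, and finally to verify the axioms of an $\mathfrak{LBS}$-operad.

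For the ambient map, observe first that $\calG$ decomposes as the disjoint union of $\calG^G$ and $\{H\in\calG\colon H\not\subseteq G\}$, the latter being in bijection with $\calG_G$ via $H\mapsto H+G$ (well-defined because $\calG$ is a building set). We construct the desired map componentwise. For each $K\in\calG^G$, we use the $K$-factor of the second product together with the natural linear map between the relevant quotient vector spaces that yields a morphism $\mathbb{P}(G^\bot/K^\bot)\to\mathbb{P}(V/K^\bot)$. For each $K\in\calG$ with $K\not\subseteq G$, we take the $(K+G)$-factor of the first product, which is a point of $\mathbb{P}(V/(K+G)^\bot)=\mathbb{P}(V/(K^\bot\cap G^\bot))$, and map it to $\mathbb{P}(V/K^\bot)$ via the linear projection induced by the surjection $V/(K^\bot\cap G^\bot)\twoheadrightarrow V/K^\bot$; this is a priori only a rational map.

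To show that the combined map restricts to a regular morphism of wonderful models landing inside $Y_{\calL,\calG}$, we use the iterated-blowup description of $Y_{\calL,\calG}$ as the closure of the open stratum inside $\prod_{H\in\calG}\mathbb{P}(V/H^\bot)$, obtained by blowing up $\mathbb{P}(V)$ along the proper transforms of the projective strata $\mathbb{P}(G^\bot)$ in an order refining the poset structure of $\calG$. The indeterminacy loci of the rational projections introduced above are resolved precisely by these blowups. Equivalently, the ambient map is clearly defined on the dense open subset of the source where all relevant coordinates lie outside the corresponding boundary divisors, so by properness of both the source and $Y_{\calL,\calG}$ it extends uniquely to the whole product. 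The image lies inside the boundary divisor $D_G\subset Y_{\calL,\calG}$ and in fact surjects onto it, by De Concini--Procesi's structure theorem for boundary strata of wonderful models \cite[Thm.~1.4]{MR3687430}, which inductively identifies $D_G$ with a product of two smaller wonderful models $Y_{\calL_G,\calG_G}\times Y_{\calL^G,\calG^G}$.

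Verification of the $\mathfrak{LBS}$-operad axioms reduces to commutativity of a diagram for each pair of nested elements $\{G_1,G_2\}\in N(\calL,\calG)$: composing first at $G_1$ and then at the residual element induced by $G_2$ yields the same morphism as the analogous iteration starting from $G_2$. This is a combinatorial identity for iterated decompositions (e.g.\ identifying $(\calL_{G_1})^{G_2}$ with $(\calL_{G_2})^{G_1}$ and likewise for the other three mixed decorations), which is part of the Feynman-category formalism of Coron \cite[Sec.~3]{MR4948093}, combined with the commutativity of the corresponding iterated blowups along disjoint or transversal centers.

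The main obstacle is the regularity and image-identification step: verifying that the ambient rational map becomes regular on the product of wonderful models and that its image lies inside $Y_{\calL,\calG}$ rather than merely in the ambient product. This is essentially the content of the boundary description of wonderful models, and can be established either by invoking the universal property of the iterated blowup, or by an explicit computation in local coordinates adapted to a maximal nested set containing $G$.
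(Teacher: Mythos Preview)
Your construction of the ambient map is exactly the paper's: for each target factor $\mathbb{P}(V/K^\bot)$ use the injection $G^\bot/K^\bot\hookrightarrow V/K^\bot$ when $K\in\calG^G$, and the surjection $V/(K+G)^\bot\twoheadrightarrow V/K^\bot$ when $K\not\subseteq G$. The paper then says only ``by direct inspection, these restrict to wonderful compactifications and satisfy the relevant axioms,'' so your write-up is already more detailed than what appears there, and your observation that the second family of maps is only rational at the ambient level is a genuine subtlety the paper's phrasing glosses over.

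Two corrections. First, the assignment $H\mapsto H+G$ from $\{H\in\calG:H\not\subseteq G\}$ to $\calG_G$ is surjective but not a bijection: already for the minimal building set of the partition lattice with $G=\{1,2\}$, the elements $\{1,3\}$, $\{2,3\}$, $\{1,2,3\}$ all map to $\{1,2,3\}$. This does no harm to the construction, since you simply reuse one source factor for several target factors, but the word ``bijection'' should go.

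Second, and more seriously, the sentence ``by properness of both the source and $Y_{\calL,\calG}$ it extends uniquely to the whole product'' is not a valid argument. Properness of source and target does not force a rational map to extend; the inverse of any nontrivial blowup is a counterexample. Regularity cannot be obtained this way. The correct mechanism is precisely the one you invoke immediately afterwards and again in your closing paragraph: the De Concini--Procesi identification of the boundary divisor $D_G\subset Y_{\calL,\calG}$ with the product $Y_{\calL_G,\calG_G}\times Y_{\calL^G,\calG^G}$. That identification, read as an inclusion $D_G\hookrightarrow Y_{\calL,\calG}$, \emph{is} the regular morphism you want, and once you have it there is no separate extension step to carry out. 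Delete the properness clause and lead directly with the boundary-divisor description (or the local-coordinate check in a chart adapted to a maximal nested set containing $G$, as you also suggest).
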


\begin{proof}
To define a map into the product 
 \[
\prod_{H\in\calG} \mathbb{P}(V/H^\bot),         
 \]
we should define maps into each factor. There are two types of factors: those for which $H$ is contained in $G$, and those for which $H$ is not contained in $G$. For the first one, we shall use the map from the corresponding factor in $\prod_{H\in\calG^G} \mathbb{P}(G^\bot/H^\bot)$ induced by the map $G^\bot/H^\bot\hookrightarrow V/H^\bot$. For the second one, we shall use the map from the factor $\mathbb{P}(V/(H+G)^\bot)$ in $\prod_{H\in\calG^G} \mathbb{P}(G^\bot/H^\bot)$ induced by the map $V/(H+G)^\bot\to V/H^\bot$. By direct inspection, these restrict to wonderful compactifications and satisfy the relevant axioms.
\end{proof}

Examining the above proof and the proof of \cite[Th.~3.2]{MR1366622}, we immediately see that the product $Y_{\calL_G,\calG_G}\times Y_{\calL^G,\calG^G}$ is precisely the irreducible component $D_G$ of the boundary divisor of the wonderful compactification. This allows us to describe the operations induced by the maps \eqref{eq:operad-DCP} on the cohomology. 

\begin{proposition}\label{prop:operad-on-cohomology}
The map
 \[
Y_{\calL_G,\calG_G}\times Y_{\calL^G,\calG^G}\to Y_{\calL,\calG}     
 \]
combined with the Poincaré duality isomorphism, induces the operation on the cohomology
 \[
H^\bullet(Y_{\calL_G,\calG_G})\otimes H^\bullet(Y_{\calL^G,\calG^G})\to 
H^{\bullet-2}(Y_{\calL,\calG})
 \]
obtained by composition of the algebra homomorphism 
 \[
H^\bullet(Y_{\calL_G,\calG_G})\otimes H^\bullet(Y_{\calL^G,\calG^G})\to H^{\bullet}(Y_{\calL,\calG})/\mathrm{Ann}(x_G)     
 \] 
defined by
\begin{align*}
\sigma_{H+G}\otimes 1&\mapsto \sigma_H\\
1\otimes \sigma_H&\mapsto \sigma_H,
\end{align*}
and the obvious identification $H^{\bullet}(Y_{\calL,\calG})/\mathrm{Ann}(x_G)\cong x_G H^{\bullet}(Y_{\calL,\calG})\subseteq H^{\bullet-2}(Y_{\calL,\calG})$.
\end{proposition}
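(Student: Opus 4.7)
The plan is to interpret the structure map of the proposition as the Poincaré-dual pushforward $\iota_*$ along the closed inclusion $\iota\colon D_G = Y_{\calL_G,\calG_G}\times Y_{\calL^G,\calG^G}\hookrightarrow Y_{\calL,\calG}$ of the irreducible boundary divisor, and then to factor $\iota_*$ through the pullback $\iota^*$. By the De Concini--Procesi description recalled in Section~\ref{sec:dcp}, the Poincaré dual of $[D_G]$ is the generator $x_G\in H^\bullet(Y_{\calL,\calG})$, so the projection formula gives $\iota_*\iota^*(\alpha) = \alpha \cup x_G$. Using surjectivity of $\iota^*$ (a consequence of the Stanley--Reisner presentations of both cohomology rings together with the compatibility of their generators under the boundary restriction), one obtains a ring isomorphism $\iota^*\colon H^\bullet(Y_{\calL,\calG})/\mathrm{Ann}(x_G)\xrightarrow{\sim} H^\bullet(D_G)$. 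After Künneth, the pushforward $\iota_*$ then factors exactly as the composition of the inverse of $\iota^*$ with multiplication by $x_G$, matching the identification $H^\bullet(Y_{\calL,\calG})/\mathrm{Ann}(x_G)\cong x_G H^\bullet(Y_{\calL,\calG})$ given in the statement. The proposition therefore reduces to checking that the algebra homomorphism defined in the statement really is the inverse of $\iota^*$.

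For that matching, I would verify the two pullback formulas $\iota^*(\sigma_H) = 1\otimes\sigma_H$ for $H\in\calG^G$ (i.e.\ $H\subseteq G$) and $\iota^*(\sigma_H) = \sigma_{H+G}\otimes 1$ for $H\in\calG$ with $H\not\subseteq G$. For this I use the geometric description of the Yuzvinsky generator recalled in Section~\ref{sec:dcp}: $\sigma_H$ is the pullback of the hyperplane class under the canonical map $\pi_H\colon Y_{\calL,\calG}\to\mathbb{P}(V/H^\bot)$. For $H\subseteq G$, the restriction $\pi_H\circ\iota$ factors through the second Künneth factor $Y_{\calL^G,\calG^G}$ and its own canonical map to $\mathbb{P}(G^\bot/H^\bot)\hookrightarrow \mathbb{P}(V/H^\bot)$; the hyperplane class restricts to the hyperplane class along this linear subspace, producing $1\otimes\sigma_H$. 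For $H\not\subseteq G$, the composition $\pi_H\circ\iota$ factors through the first Künneth factor $Y_{\calL_G,\calG_G}$ and its canonical map to $\mathbb{P}(V/(H+G)^\bot)$ (since $H+G\in\calG_G$), followed by the linear projection $\mathbb{P}(V/(H+G)^\bot)\dashrightarrow \mathbb{P}(V/H^\bot)$, and the hyperplane class pulls back to the hyperplane class, yielding $\sigma_{H+G}\otimes 1$.

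The main obstacle is the careful geometric analysis in the $H\not\subseteq G$ case, where the linear projection $\mathbb{P}(V/(H+G)^\bot)\dashrightarrow\mathbb{P}(V/H^\bot)$ is only rational, with indeterminacy locus $\mathbb{P}(H^\bot/(H+G)^\bot)$; one has to verify that the resulting rational composition resolves on the wonderful model and agrees with $\pi_H|_{D_G}$. This is precisely what the blow-up construction of De Concini--Procesi guarantees, and can be checked in the explicit coordinate charts of \cite{MR1366622}. A quicker route is to first observe that the assignment in the statement, extended multiplicatively, does define an algebra map on the tensor product, which is a direct verification on the Stanley--Reisner relations (both quadratic and the ``higher'' monomial relations stemming from codimension constraints), and then to note that any algebra map that agrees with $\iota^{*-1}$ on the generators $\sigma_H$ must agree with it everywhere because those classes generate the tensor product of cohomology rings. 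Once the pullback formulas are established, composition with multiplication by $x_G$ delivers the stated description of $\iota_*$, completing the proof.
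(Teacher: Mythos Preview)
Your proposal is correct and takes essentially the same approach as the paper: both identify the induced map as the Gysin pushforward $\iota_*$ for the divisor inclusion, use that $x_G$ is the Poincaré dual of $D_G$ so that $\iota_*\iota^* = (-)\cup x_G$, and then read off the algebra homomorphism from the geometric description of the $\sigma_H$ as pullbacks of hyperplane classes. The paper's proof is much terser and simply asserts the last step, whereas you spell out the two cases $H\subseteq G$ and $H\not\subseteq G$ and flag the rational-map subtlety in the latter; this extra care is a virtue rather than a divergence in strategy.
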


\begin{proof}
The map \eqref{eq:operad-DCP} induces, by virtue of the Künneth formula, a map composition
 \[
H_{\bullet}(Y_{\calL_G,\calG_G})\otimes H_{\bullet}(Y_{\calL^G,\calG^G})\cong
H_{\bullet}(Y_{\calL_G,\calG_G}\times Y_{\calL^G,\calG^G})\to
H_{\bullet}(Y_{\calL,\calG}).     
 \]
Using the Poincaré duality of the cohomology algebras, one can view these maps as maps 
 \[
H^{-2\dim(Y_{\calL_G,\calG_G})+\bullet}(Y_{\calL_G,\calG_G})\otimes H^{-2\dim(Y_{\calL^G,\calG^G})+\bullet}(Y_{\calL^G,\calG^G})\to
H^{-2\dim(Y_{\calL,\calG})+\bullet}(Y_{\calL,\calG}).     
 \]
Their description given above follows from the fact that the map of homology induced by the inclusion of the component $D_G$ of boundary divisor gets identified by the Poincaré duality isomorphism with the intersection product with $x_G$, and from the description of the elements $\sigma_H$ as the pullbacks of the hyperplane class of the respective projective spaces. 
\end{proof}

\begin{remark}
The same formulas are used to define an \emph{ad hoc} operad-like structure for arbitrary Chow rings of polymatroids in \cite[Lemma 4.14]{MR4675068}; as we indicated above, this fits the generalization of the formalism of Coron to polymatroids \cite[Sec.~7.2]{MR4948093}.  
\end{remark}

\section{The Deligne--Mumford spaces and the Chen--Gibney--Krashen spaces}\label{sec:DM-CGK}

\subsection{The braid arrangement}

The arrangement of subspaces that is of utmost importance for us is the \emph{braid arrangment}. It is a subspace arrangement in the vector space $V_n=\mathbb{C}^n/\mathbb{C}(1,1,\ldots,1)$ which we shall denote by $\calL(\Delta_n)$, whose collection of dual subspaces is
 \[
\calC(\Delta_n)=
\left\{  
\bigoplus_{\pi \text{ a partition of } \{1,\ldots,n \}}\mathrm{span}\{z_i-z_j : i\sim_\pi j \}
\right\} .    
 \]
In plain words, $\calL(\Delta_n)$ is the collection of all possible intersections of diagonal hyperplanes $z_i=z_j$ in the quotient $\mathbb{C}^n/\mathbb{C}(1,1,\ldots,1)$. Note that if we consider the projectivization of the complement $\mathbb{P}(V_n\setminus \calL(\Delta_n))$, it can be identified with the space of $n$-tuples of complex numbers modulo the action of the group $z\mapsto az+b$ of affine transformations, which is the same as $\calM_{0,n+1}$, the space of $n$-tuples of points of $\mathbb{C}P^1$ modulo projective transformations. 

The minimal building set $\calG_{\min}(\Delta_n)$ of $\calL(\Delta_n)$ consisting of all subspaces 
 \[
H_I=\{z_i=z_j \text{ for } i,j\in I \} \text{ for } I\subsetneq \{1,\ldots,n \} ,  |I|\geq 2.  
 \]
It is well known that the wonderful compactification $Y_{\calL(\Delta_n),\calG_{\min}(\Delta_n)}$ is isomorphic to the genus zero Deligne--Mumford space $\overline{\calM}_{0,n+1}$. 

Let us explain what the maps \eqref{eq:operad-DCP} give in this case. For $I\subsetneq \{1,\ldots,n \}$,  $|I|\geq 2$, we have 
$\calL_I(\Delta_n)$ consists of the intersections that refine the diagonal $H_I$, so we may regard all elements from $I$ as one single element, which we denote $\star$. This allows us to identify $\calL_I(\Delta_n)\cong \calL(\Delta_{I^c\sqcup\{\star\}})$. Clearly, we have $\calL^I(\Delta_n)\cong \calL(\Delta_{I})$, so the map \eqref{eq:operad-DCP} becomes  
 \[
Y_{\calL(\Delta_{I^c\sqcup\{\star\}}),\calG_{\min}(\Delta_{I^c\sqcup\{\star\}})}\times Y_{\calL(\Delta_I),\calG_{\min}(\Delta_I)}\to Y_{\calL(\Delta_n),\calG_{\min}(\Delta_n)} ,
 \]
which corresponds to the usual operad structure on $\{\overline{\calM}_{0,n+1}\}$. 

\subsection{Stable rooted trees of projective spaces}

Let us now consider the space of $n$-tuples of vectors of $\mathbb{C}^d$ modulo the action of the group $z\mapsto az+b$ of translations and homotheties. That space was considered by Chen, Gibney and Krashen \cite{MR2496455} who denoted it $TH_{d,n}$, and constructed a particular compactification of it, which they denoted $T_{d,n}$. They described the spaces $T_{d,n}$ as moduli spaces of  stable rooted trees of $d$-dimensional projective spaces with $n$ marked points. We shall now explain what this means. We adopt a different notational convention which we find more suggestive: we denote $\CGK_d(n):=TH_{d,n}$ and $\bCGK_d(n):=T_{d,n}$.

Suppose that $\tau$ is a usual combinatorial rooted tree with $n$ leaves, with edges directed towards the root; we assume that $\tau$ is \emph{stable}, so that each vertex has at least two incoming edges (note that contrary to the classical graph theory, we do not regard the leaves as vertices of a tree). A $\tau$-shaped stable rooted tree of $d$-dimensional projective spaces is a connected variety constructed in the following way. One takes, for each vertex $v$ of $\tau$, a variety $X_v$ isomorphic to the $d$-dimensional projective space and  equipped with a closed embedding $f_v\colon\mathbb{P}^{d-1}\to X_v$ (corresponding, in a sense, to the outgoing edge of $v$). We also require a choice of marked points outside the image of $f_v$, which are in bijection with the incoming edges of the corresponding vertex $v$ (thus, each component has at least two marked points). The varieties $X_v$ are glued together via blowups: to glue the $X_v$ corresponding to the vertex $v$ to the component $X_w$ corresponding to the vertex $w$ such that there is a directed edge $e\colon v\to w$ in $\tau$, one blows up the component $X_w$ at the marked point corresponding to $e$, and identifies the divisor $f_v(\mathbb{P}^{d-1})$ on $X_v$ with the exceptional divisor of the blowup. Note that the open part corresponds to the corolla (a single-vertex tree with $n$ leaves) and is naturally identified with $\CGK_d(n)$ (a configuration of points in $\mathbb{P}^{d}$ outside a hyperplane is, up to isomorphism preserving the hyperplane, the same as configuration of points in an affine space modulo translations and homotheties). 

Even though  the word ``operad'' does not appear in \cite{MR2496455}, their construction is very much of operadic nature. Indeed, by the very nature of the definition, the collection $\bCGK_d$ for a fixed $d$ is an algebra over the monad of rooted trees, hence an operad \cite{LV}. We can then compute the (say, rational) homology of this operad, obtaining an operad in chain complexes $H_\bullet(\bCGK_d)$. One of the main results of this paper is an explicit description of this operad. 

\begin{remark}
A reader with a particular passion for Chen--Gibney--Krashen spaces may be interested to know that the conjecture about pairing of cycles posed in \cite[Sec.~6.1]{MR2496455} is now a theorem: once one views the spaces $\bCGK_d(n)$ as wonderful models of subspace arrangements, this conjecture is a direct consequence of a much more general result of Pagaria--Pezzoli \cite[Lemma 4.4]{MR4675068}.
\end{remark}

\subsection{Multiples of an arrangement}

To place the Chen--Gibney--Krashen spaces in the context of wonderful models, it will be convenient for us to discuss a general operation on subspace arrangements which we shall call ``multiplication by $d$''. Namely, if $\calL$ is an arrangement of subspaces in a vector space $V$ with the corresponding to it collection of subspaces $\calC$ in $V^*$, we can define the collection $\calC^{(d)}$ of subspaces in $(V^*)^{\oplus d}$ by setting
 \[
\calC^{(d)}:=\{ X^{\oplus d}\colon X\in\calC \} ,
 \]
and the subspace arrangement $\calL^{(d)}$ in $V^{\oplus d}$ by setting 
 \[
\calL^{(d)}:=\{ U^\bot\colon U\in \calC^{(d)} \}.     
 \] 
This notion goes back to von Neumann \cite{Neumann}. 

\begin{remark}
If $V$ is a vector space over $\mathbb{R}$ and $d=2$, this construction can be viewed as the complexification of the collection of subspaces, which explains why it is common to use $c$ instead of $d$ in the formulas, and call the process ``$c$-plexification'', capitalizing on the phonetic similarity to ``complexification''.
\end{remark}

If $\calG$ is a building set for $\calL$, then
 \[
\calG^{(d)} :=\{ G^{\oplus d}\colon G\in\calG \} 
 \]
is easily seen to be a building set of $\calL^{(d)}$. Additionally, there is an obvious map $\imath_{d,d'}\colon V^{\oplus d}\to V^{\oplus d'}$ for $d<d'$ that sends $v$ to $v\oplus 0^{\oplus (d'-d)}$ which satisfies $\imath_{d,d'}^*(\calG^{(d')})=\calG^{(d)}$. This map induces a map 
 \[
\prod_{G\in\calG^{(d)}} \mathbb{P}(V^{\oplus d}/G^\bot) \to     
\prod_{G\in\calG^{(d')}} \mathbb{P}(V^{\oplus d'}/G^\bot)     
 \]
that restricts to a well defined map
 \[
Y_{\calL^{(d)},\calG^{(d)}} \to Y_{\calL^{(d')},\calG^{(d')}} .     
 \] 
By a direct inspection, one sees that this map is compatible with the presentations of the cohomology discussed above; specifically, it induces the map
 \[
H^\bullet(Y_{\calL^{(d')},\calG^{(d')}},\mathbb{Z})\to H^\bullet(Y_{\calL^{(d)},\calG^{(d)}},\mathbb{Z})      
 \]
sending each generator $x_G$ to the same generator $x_G$ (or, equivalently, each generator $\sigma_G$ to the same generator $\sigma_G$).

If we consider the braid arrangement $\calL(\Delta_n)$ and its minimal building set $\calG_{\min}(\Delta_n)$, multiplication by $d$ produces a subspace arrangement $\calL^{(d)}(\Delta_n)$ and its minimal building set $\calG^{(d)}_{\min}(\Delta_n)$. In this case, $\mathbb{P}(V_n^{\oplus d}\setminus \calL^{(d)}(\Delta_n))$ is naturally identified with $\CGK_d(n)$, and, moreover, Gallardo and Routis \cite{MR3687430} proved that there is an isomorphism 
 \[
Y_{\calL^{(d)}(\Delta_n),\calG^{(d)}_{\min}(\Delta_n)}\cong \bCGK_d(n).     
 \]
The lattice of intersections of the subspace arrangement $\calL^{(d)}(\Delta_n)$ is the same as the lattice of intersections of the hyperplane arrangement $\calL(\Delta_n)$, that is the partition lattice. It follows that for each fixed $d\ge 1$, there is an operad structure on the collection of spaces $\{Y_{\calL^{(d)}(\Delta_n),\calG^{(d)}_{\min}(\Delta_n)}\}$. Similarly to how one proves that the operad $\{\overline{\calM}_{0,n+1}\}$ is isomorphic to $\bCGK_1$, one can show that the operad structure on $\bCGK_d$ obtained via the maps \eqref{eq:operad-DCP} coincides with the operad structure coming from the rooted tree nature of the Chen--Gibney--Krashen construction.

Chen, Gibney and Krashen give in \cite[Sec.~6]{MR2496455} a presentation of the Chow ring of $\bCGK_d(n)$, and further establish in \cite[Sec.~7.3]{MR2496455} that the Chow ring $A^\bullet(\bCGK_d(n))$ is isomorphic to the integral cohomology ring $H^{\bullet}(\bCGK_d(n),\mathbb{Z})$, with the isomorphism multiplying degrees by two. Their argument uses the results of Fulton and MacPherson \cite{MR1259368}, but since we know that these spaces are in fact wonderful compactifications, we can deduce the same result from the De Concini--Procesi presentation. In fact, it is a consequence of the following general statement.

\begin{proposition}
Let $\calL$ be an arrangement of hyperplanes. We have
 \[
H^\bullet(Y_{\calL^{(d)},\calG^{(d)}},\mathbb{Z})\cong SR(N(\calL,\calG))/\left(\bigl(\sum_{H\supseteq G}x_G\bigr)^d \colon G\in\calG, \dim G=1\right).    
 \]
\end{proposition}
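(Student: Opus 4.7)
The plan is to apply the De Concini--Procesi presentation of $H^\bullet(Y_{\calL^{(d)},\calG^{(d)}})$ recalled in Section~\ref{sec:dcp} and reduce the resulting ideal of relations to the one in the statement. I would first observe that the operation $G\mapsto G^{\oplus d}$ is an isomorphism of posets $\calC\to\calC^{(d)}$ preserving direct sums, so $\calG^{(d)}$ is a building set for $\calL^{(d)}$ and the nested set complexes coincide combinatorially: $N(\calL^{(d)},\calG^{(d)})=N(\calL,\calG)$. The only quantitative effect of the multiplication by $d$ is $d^{(d)}_{G_1,\ldots,G_k,G}=d\cdot d_{G_1,\ldots,G_k,G}$. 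The De Concini--Procesi presentation therefore gives
\[
H^\bullet(Y_{\calL^{(d)},\calG^{(d)}},\mathbb{Z})\cong SR(N(\calL,\calG))/J,
\]
where $J$ is generated by $\prod_{i=1}^{k}x_{G_i}\cdot\sigma_G^{d\cdot d_{S,G}}$ over nested sets $S=\{G_1,\ldots,G_k\}$ and $G\in\calG$ with $G_i\subsetneq G$.

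Denoting $I=(\sigma_L^d\colon L\in\calG,\,\dim L=1)$, the task is then to prove $J=I$ in $SR(N(\calL,\calG))$. The inclusion $I\subseteq J$ is immediate: taking $S=\emptyset$ and $G=L$ an atom yields $d_{\emptyset,L}=1$, so $\sigma_L^d\in J$. The substantive direction $J\subseteq I$ reduces to the key lemma that $\sigma_G^{d\dim G}\in I$ for every $G\in\calG$, since the DCP generators with $k\ge 1$ then follow from the same argument applied to the sub-arrangement $(\calL^G,\calG^G)$ governing the corresponding stratum, via the operad-like structure of wonderful models recorded in Proposition~\ref{prop:operad-on-cohomology}.

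The key lemma uses the hyperplane arrangement assumption crucially: each $G\in\calG$ decomposes as a direct sum $L_1\oplus\cdots\oplus L_m$ of atoms of $\calC$, all of which lie in $\calG$ because atoms are indecomposable. The relation $\sigma_{L_i}^d\in I$ furnishes a monic identity of degree $d$ in the variable $x_{L_i}$, and combined with the Stanley--Reisner relations $x_L x_{L'}=0$ for non-nested atom pairs, it allows one to express $\sigma_G^{dm}$ as an element of $I$. The prototype computation is the identity
\[
\sigma_{L_1}^d\sigma_{L_2}^d=\sigma_G^d\sigma_{L_1}^d+\sigma_G^d\sigma_{L_2}^d-\sigma_G^{2d}
\]
valid in $SR(N(\calL,\calG))$ for a rank-two $G$ sitting at the top of $\calG$ with non-nested atoms $L_1,L_2$ (the off-diagonal cross terms being killed by $x_{L_1}x_{L_2}=0$), which exhibits $\sigma_G^{2d}$ as an element of $I$ after rearrangement.

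The main obstacle will be the bookkeeping in two situations: when the atoms of $G$ fail to be pairwise non-nested (so the cancellation $x_Lx_{L'}=0$ is unavailable), and when $G$ is not at the top of $\calG$ so that $\sigma_G$ acquires extra ``tail'' terms indexed by elements properly containing $G$. Both cases are handled by an induction on $\dim G$, at each step using additional Stanley--Reisner cancellations and replacing nested subcollections of atoms by suitable intermediate elements of the building set in order to iterate the reduction.
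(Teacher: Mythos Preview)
Your overall strategy matches the paper's: both observe that the bijection $G\mapsto G^{\oplus d}$ identifies the nested set complexes and scales the De Concini--Procesi exponents by $d$, and both then argue that the full DCP ideal is generated by the atom relations $\sigma_L^d$. The paper dispatches this last step by citing the $d=1$ case (Feichtner--Yuzvinsky) and remarking that the argument goes through verbatim, since it only uses Stanley--Reisner vanishing of certain products $x_{G_1}\cdots x_{G_k}x_H$, a purely combinatorial fact insensitive to~$d$. Your proposal instead attempts to reconstruct that argument by hand, and the prototype identity you write down is correct.

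There is, however, one genuine misstep. Your reduction of the $k\ge 1$ DCP generators ``via the operad-like structure of Proposition~\ref{prop:operad-on-cohomology}'' does not work as stated. That proposition describes maps into $H^\bullet(Y_{\calL,\calG})\cong SR/J$, so it presupposes the very identification $I=J$ you are trying to establish; and the restriction of $\sigma_L$ to the sub-arrangement $(\calL^G,\calG^G)$ is not $\sigma_L$ but $\sigma_L^{(G)}=\sum_{L\subseteq H\subseteq G}x_H$, so relations in $I^{(G)}$ do not transport to $I$ under any obvious ring map. The Feichtner--Yuzvinsky argument does not separate the cases $k=0$ and $k\ge 1$ at all: given a DCP generator $\prod_i x_{G_i}\cdot\sigma_G^{d\,d_{S,G}}$ one picks an atom $L\subseteq G$ with $L\not\subseteq\sum_iG_i$, observes that in $SR$ one has $\prod_i x_{G_i}\cdot\sigma_L=\prod_i x_{G_i}\cdot\sum_{H}x_H$ where only those $H\supseteq L$ with $\{G_1,\dots,G_k,H\}$ nested survive, and iterates. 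This is exactly the ``Stanley--Reisner cancellation and replacement by intermediate building-set elements'' you allude to in your final paragraph, and it subsumes the $k\ge 1$ case without any appeal to stratum geometry. So drop the operad-structure detour and run the single induction; what remains of your sketch is then correct but, as you acknowledge, incomplete on the bookkeeping.
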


\begin{proof}
For $d=1$, this simplification of the De Concini--Procesi presentation is well known, see, e.g. \cite[Th.~1]{MR2038195}. Moreover, for any hyperplane arrangement $\calL$, the combinatorics of the arrangement $\calL^{(d)}$ does not depend on $d$, and the argument of \cite[Th.~1]{MR2038195} showing that all the generators of the ideal of relations of \cite[Th.~5.2]{MR1366622} belong to the ideal generated by the given relations adapts \emph{mutatis mutandis}, since it only uses the fact that the Stanley--Reisner relations imply that some terms in the product of $\sum_{C\supseteq B}x_C$ and some monomials in the variables $x_G$ vanish, and the same will hold for the power $(\sum_{H\supseteq G}x_H)^d$. 
\end{proof}

Therefore, recalling that the minimal building set of the lattice of partitions of $\underline{n}$ is in a bijection with the subsets of $\underline{n}$ of cardinality at least two and making the relations of the Stanley--Reisner ring explicit, we have 
 \[
H^\bullet(\bCGK_d(n),\mathbb{Z}) \cong \frac{\mathbb{Z}[x_I\colon I\subseteq\underline{n}, |I|\ge 2]}
{(\{x_Ix_J\colon I\cap J\ne\varnothing, I\not\subseteq J, J\not\subseteq I\}, \{(\sum_{\{i,j\}\subseteq I}x_I)^d\colon i,j\in\underline{n} \} )}.    
 \]
Note that the generators $x_G$ are denoted by $\delta_G$ by Chen, Gibney, and Krashen \cite[Sec.~6]{MR2496455}.  The generators that we denote $\sigma_G$ also appear in \cite{MR2496455} (under the name $\eta_G$) as instances of very ample divisor classes on $\bCGK_d(n)$. The presentations in terms of those generators is 
 \[
H^\bullet(\bCGK_d(n))\cong 
\frac{\mathbb{Z}[\sigma_I\colon I\subseteq\underline{n}, |I|\ge 2]}
{(\{(\sigma_I-\sigma_{I\cup J})(\sigma_J-\sigma_{I\cup J})\colon I\cap J\ne\varnothing, I\not\subseteq J, J\not\subseteq I\}, \{\sigma_{\{i,j\}}^d\colon i\ne j\in\underline{n} \} )}.     
 \]
Moreover, the operad-like structure from Proposition \ref{prop:operad-on-cohomology} is given by the maps

 \[
H^\bullet(\bCGK_d(I^c\sqcup\{\star\}))\otimes H^\bullet(\bCGK_d(I))\to 
H^{\bullet-2}(\bCGK_d(n))
 \]
obtained by composition of the algebra homomorphism 
 \[
H^\bullet(\bCGK_d(I^c\sqcup\{\star\}))\otimes H^\bullet(\bCGK_d(I))\to H^{\bullet}(\bCGK_d(n))/\mathrm{Ann}(x_G)   
 \] 
defined by
\begin{align*}
\sigma_K\otimes 1&\mapsto 
\begin{cases}
\quad\,\,\, \sigma_K, \qquad\, \star\notin K,\\
\sigma_{K\sqcup I\setminus \{\star\}}, \quad \star\in K,
\end{cases} \\    
1\otimes \sigma_K&\mapsto \sigma_K,
\end{align*}
and the obvious identification 
 \[
H^{\bullet}(\bCGK_d(n))/\mathrm{Ann}(x_G)\cong x_G H^{\bullet}(\bCGK_d(n))\subseteq H^{\bullet-2}(\bCGK_d(n)).    
 \]

\section{The gravity operad and its properties}\label{sec:gravity}

\subsection{The operad of Westerland}\label{sec:grav-op}

In \cite{MR3068956}, Westerland made a major advance in studying the open parts $\CGK_d(n)$ of the Chen--Gibney--Krashen spaces. Namely, he described the operad structure on the collection $H_{\bullet-1}(\CGK_d)$ as a homological transfer map (this essentially goes back to Kimura, Stasheff and Voronov \cite{MR1341693}), and found an algebraic description of that operad. Let us recall that construction in some detail following Dupont and Horel \cite[Sec.~4]{MR3767344}. One begins by considering the spaces
 \[
C_{2d}(n)=\Conf(n,\mathbb{R}^{2d})/(\mathbb{R}^{2d}\rtimes \mathbb{R}_{>0}),
 \] 
that is the quotients of the configuration spaces of $n$ distinct labelled points in $\mathbb{R}^{2d}$ by translations and dilations. We note that there is an obvious homeomorphism
 \[
\Conf(n,\mathbb{R}^{2d})\cong \Conf(n,\mathbb{C}^{d}),     
 \]
so there is a natural $S^1$-bundle
 \[
C_{2d}(n)\to \CGK_d(n).     
 \] 
If one considers the Fulton--MacPherson compactification $C_{2d}(n)\hookrightarrow FM_{2d}(n)$, it turns out that the $S^1$-action on $C_{2d}(n)$ extends to $FM_{2d}(n)$ and is compatible with the stratification of $FM_{2d}(n)$, so that the quotient $X_{2d}(n):=FM_{2d}(n)/S^1$ also has a stratification $X_{2d}(\tau)$ indexed by stable rooted trees $\tau$ with $n$ leaves, and there is a commutative square 
 \[\begin{tikzcd}
C_{2d}(n)\arrow{r}{\sim} \arrow[swap]{d}{} & FM_{2d}(n) \arrow{d}{} \\
\CGK_d(n)\arrow{r}{\sim} & X_{2d}(n)
\end{tikzcd}
 \]
where the horizontal arrows are embeddings that are additionally homotopy equivalences and the vertical arrows are the quotient maps by the $S^1$-action. Furthermore, for a two-vertex tree $\tau$ with $J$ the set of leaves not adjacent to the root, there is an $S^1$-bundle
 \[
X_{2d}(\tau)\to X_{2d}(I\sqcup\{\star\})\times X_{2d}(J),     
 \]
leading to a map in the homology
 \[
H_\bullet(X_{2d}(I\sqcup\{\star\})\times X_{2d}(J))\to H_{\bullet+1}(X_{2d}(\tau))\to  H_{\bullet+1}(X_{2d}(I\sqcup J))   
 \] 
which, via the Künneth formula, induces an operad structure on 
 \[
\{H_{\bullet-1}(X_{2d}(n))\}\cong\{H_{\bullet-1}(\CGK_d(n))\}. 
 \]
In our work, we shall consider the operad 
 \[
\widetilde\Grav_d:=\mathcal S^{2d}H_{\bullet-1}(\CGK_d)   
 \]
obtained from the operad of Westerland by a $2d$-fold operadic suspension. Algebraically, this amounts to even degree shifts
 \[
\mathcal S^{2d}H_{\bullet-1}(\CGK_d(n))=H_{2d(n-1)+\bullet-1}(\CGK_d(n)),     
 \]
which are somewhat ``cosmetic'', and in particular have no impact on signs in formulas. However, besides proving algebraically convenient in the following sections, this suspension has a conceptual meaning behind it. Indeed, since $\dim\CGK_d(n)=2d(n-1)-2$, we have the Poincaré duality isomorphism
 \[
H^{\bullet+1}_c(\CGK_d(n))\cong H_{2d(n-1)+\bullet-1}(\CGK_d(n)).     
 \] 
The corresponding operad structure on $\{H^{\bullet+1}_c(\CGK_d(n))\}$ can be also described directly as follows. The Chen--Gibney--Krashen construction leads to a stratification of $\bCGK_d(n)$ by spaces $\bCGK_d(\tau)$ corresponding to stable rooted trees $\tau$ with $n$ leaves, and the stratum corresponding to the two-vertex tree with $J\subset\underline{n}$ being the set of leaves attached to the only non-root vertex is naturally isomorphic to
 \[
\CGK_d(I\sqcup\{\star\})\times \CGK_d(J).     
 \]
This stratum is adjacent to $\CGK_d(I\sqcup J)$, and one can consider the long exact sequence of a pair, giving the connecting homomorphism
 \[
H^{\bullet}_c(\CGK_d(I\sqcup\{\star\})\times \CGK_d(J))) \to H^{\bullet-1}_c(\CGK_d(I\sqcup J)),   
 \] 
which is precisely the Poincaré dual of the operad structure on $\widetilde\Grav_d=\mathcal S^{2d}H_{\bullet-1}(\CGK_d)$. 

Westerland's algebraic description of this operad uses the operad that he denotes $\Grav_d$, generated by symmetric elements $g_I\in\Grav_d(I)$ of homological degree $2d-1$ and $c\in\Grav_d(1)$ of homological degree $-2$ subject to the following relations:
\begin{align*}
\sum_{\{i,j\}\subseteq J} 
    g_{\underline{n}\setminus\{i,j\}\sqcup\{\star\}}\circ_\star 
            g_{\{i,j\}} &= g_{I\sqcup\{\star\}}
                \circ_\star g_{J}\quad \text{ for $\underline{n}=I\sqcup J$ with $|J|>2$, $k=0,\ldots,d-1$}\\
\sum_{\{i,j\}\subset\underline{n}} 
    g_{\underline{n}\setminus\{i,j\}\sqcup\{\star\}}\circ_\star 
            g_{\{i,j\}} &=0 \quad \text{ for $n\geqslant 3$, $k=0,\ldots,d-1$},\\
 c^d=0, \quad &\text{ and }\quad  c\circ_1 g_I = g_I\circ_i c\quad  \text{ for all } i\in I.
 \end{align*}
Westerland established in \cite[Th.~1.2]{MR3068956} that the operad $H_{\bullet-1}(\CGK_d)$ is isomorphic to the suboperad of $\Grav_d$ generated by all elements of arity greater than one. An important ingredient of his argument is the computation of the cohomology algebra of $\CGK_d(n)$. We shall use that computation to establish the following auxiliary statement.

\begin{proposition}\label{prop:split}
The mixed Hodge structure on $H^\bullet(\CGK_d(n))$ is split. 
\end{proposition}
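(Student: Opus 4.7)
The plan is to use the wonderful compactification $\CGK_d(n) \hookrightarrow \bCGK_d(n)$ together with Deligne's weight spectral sequence, combined with an explicit algebraic description of the cohomology, to exhibit a canonical splitting of the weight filtration.

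First, I would verify that all boundary strata of $\bCGK_d(n)$ have pure Hodge--Tate mixed Hodge structure of type $(k,k)$ in degree $2k$. By the operad-like structure recalled in Section~\ref{sec:dcp}, each stratum $D_S = \bigcap_{G \in S} D_G$ corresponding to a nested set $S$ is a product of smaller wonderful models $\bCGK_d(I)$. Since $\bCGK_d(n)$ is a smooth projective variety whose cohomology is generated by algebraic cycles (namely, the boundary classes $x_G$ and the ample pullback $\sigma_{V^*}$), it has pure Hodge--Tate mixed Hodge structure, and an induction on $n$ yields the same conclusion for all strata.

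Next, apply Deligne's weight spectral sequence
\[
E_1^{-p,q} = \bigoplus_{|S|=p} H^{q-2p}(D_S)(-p) \;\Longrightarrow\; H^{q-p}(\CGK_d(n)),
\]
where $S$ runs over nested sets of cardinality $p$ in $\calG_{\min}^{(d)}(\Delta_n)$. The $E_1$-page is of pure Hodge--Tate type, so each graded piece $\mathrm{Gr}^W_{2p} H^\bullet(\CGK_d(n))$ is pure Hodge--Tate. This reduces the claim to producing a splitting of the weight filtration as mixed Hodge structures, equivalently, to the vanishing of the Ext-classes between Tate pieces of different weights.

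To establish splitness, I would exhibit an explicit $\k$-linear section of the surjection $H^\bullet(\CGK_d(n)) \twoheadrightarrow \mathrm{Gr}^W H^\bullet(\CGK_d(n))$ that is also compatible with the Hodge filtration. The key input is the De Concini--Procesi / Yuzvinsky presentation for the cohomology of the arrangement complement, combined with Westerland's computation of $H^\bullet(\CGK_d(n))$. This model endows the cohomology algebra with a natural bigrading by (cohomological degree, weight), with bihomogeneous generators realized by explicit algebraic cocycles of prescribed Hodge type. The main obstacle is ensuring compatibility with the Hodge filtration rather than merely with the weight filtration: the weight grading is purely combinatorial, coming from the boundary stratification, but realizing the section at the level of Hodge filtration requires choosing explicit differential form representatives and verifying that the associated extensions in the category of Hodge--Tate structures are trivial. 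This Hodge-theoretic refinement of the De Concini--Procesi presentation is the delicate point of the argument.
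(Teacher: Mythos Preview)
Your proposal is not a proof but an outline that stops precisely at the hard step: you correctly observe that the weight-graded pieces are Hodge--Tate, but then say that producing a Hodge-compatible section is ``the delicate point of the argument'' and leave it there. Since $\mathrm{Ext}^1_{\mathrm{MHS}}(\mathbb{Q}(a),\mathbb{Q}(b))$ is nonzero for $a<b$, there is no formal reason why a mixed Hodge--Tate structure should split, and nothing in your sketch (the De Concini--Procesi presentation, Westerland's model, or the existence of a combinatorial bigrading) by itself controls the Hodge filtration. So as written there is a genuine gap.

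The paper takes a completely different and much shorter route that bypasses extensions altogether: it proves that $H^m(\CGK_d(n))$ is \emph{pure} of a single weight for every $m$, which trivially implies splitness. The mechanism is the Fadell--Neuwirth type fibration
\[
\mathrm{Conf}(n-2,\mathbb{C}^d\setminus\{a_1,a_2\})\hookrightarrow \CGK_d(n)\twoheadrightarrow \mathbb{C}P^{d-1},
\]
whose Serre spectral sequence degenerates at $E_2$. One checks separately that the fibre has cohomology pure of weight $\tfrac{2d}{2d-1}$ (via a good-arrangement argument \`a la Dupont), while the base $\mathbb{C}P^{d-1}$ is pure of weight $1$. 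The crucial observation is a degree count: the base contributes only in degrees $0,2,\ldots,2d-2$ and the fibre only in degrees divisible by $2d-1$, so for each total degree $m$ there is at most one pair $(p,q)$ with $p+q=m$ and both factors nonzero. Hence $H^m$ carries a single weight, and there is nothing to split. Compared with your approach, this avoids any discussion of extensions or explicit cocycle representatives; the splitting is a consequence of purity, not of a constructed section.
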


\begin{proof}
Recall that the mixed Hodge structure of a variety $X$ is said to be pure of weight $\frac{p}{q}$ with $\gcd(p,q)=1$, if it is concentrated in degrees divisible by $q$, and in degree $qa$ it is concentrated in the weight $pa$. Let us explain that, for any finite set of points $a_1,\ldots, a_k\in \mathbb{C}^d$, the mixed Hodge structure of 
 \[
 H^\bullet(\mathrm{Conf}(n,\mathbb{C}^d\setminus \{a_1,\ldots,a_k\}))    
 \]
is pure of weight $\frac{2d}{2d-1}$. (This is certainly known to the experts, for instance it can be inferred from the results of \cite{huang2024cohomologyconfigurationspacespunctured}, but we record it for completeness.) The argument goes as follows. Slightly modifying \cite[Def.~8.2]{MR4157110}, we call a finite set $\{H_i\}_{i\in I}$ of affine subspaces in a complex affine space a good arrangement of codimension $d$, if they are all of dimension $d$, and if for each $i\in I$, the nonempty subspaces in $\{H_i\cap H_j\}_{j\ne i}$ form a good arrangement of codimension $d$ in $H_i$. The proof of \cite[Prop.~8.6]{MR4157110} works \emph{mutatis mutandis} to establish that for any good arrangement $\{H_i\}_{i\in I}$ of codimension $d$ in a complex affine space~$V$, the mixed Hodge structure of $H^\bullet(V\setminus \cup_i H_i)$ is pure of weight $\frac{2d}{2d-1}$. We now consider the arrangement of codimension $d$ subspaces in $\mathbb{C}^{nd}=(\mathbb{C}^d)^n$ consisting of the diagonal subspaces $\Delta_{i,j}=\{z_i=z_j\}_{1\le i\ne j\le n}$ and the ``punctures'' $P_{i,s}=\{z_i=a_s\}_{1\le i\le n,1\le s\le k}$. They manifestly form a good arrangement of codimension $d$, proving the stated result. 

We now recall the observation of Westerland \cite[Sec.~2]{MR3068956} that there is a Fadell--Neuwirth type fibration
 \[
\mathrm{Conf}(n,\mathbb{C}^d\setminus \{a_1,a_2\})\hookrightarrow \mathrm{Conf}(n,\mathbb{C}^d)/(\mathbb{C}^d\rtimes\mathbb{C}^\times)\twoheadrightarrow  \mathrm{Conf}(2,\mathbb{C}^d)/(\mathbb{C}^d\rtimes\mathbb{C}^\times),    
 \]
that $\mathrm{Conf}(2,\mathbb{C}^d)/(\mathbb{C}^d\rtimes\mathbb{C}^\times)\cong \mathbb{C}P^{d-1}$, and that the Serre spectral sequence of this fibration abuts on page two, and that by degree reasons there is a multiplicative isomorphism 
 \[
E_2^{\bullet,\bullet}= H^\bullet(\mathbb{C}P^{d-1})\otimes H^\bullet(\mathrm{Conf}(n,\mathbb{C}^d\setminus \{a_1,a_2\})) \cong H^\bullet(\mathrm{Conf}(n,\mathbb{C}^d)/(\mathbb{C}^d\rtimes\mathbb{C}^\times))=H^\bullet(\CGK_d(n)).  
 \] 
It is established in various contexts (see, for instance, \cite[Th.~3]{MR1404924} or \cite[Th.~6.5]{MR2393625}) that the Leray spectral sequence of a morphism between quasi-projective varieties is strictly compatible with the mixed Hodge structures. It follows that 
the associated graded for the weight filtration on $H^\bullet(\CGK_d(n))$ is 
 \[
H^\bullet(\mathbb{C}P^{d-1})\otimes H^\bullet(\mathrm{Conf}(n,\mathbb{C}^d\setminus \{a_1,a_2\})).   
 \] 
It remains to notice that the homological degrees where the first factor does not vanish are $0,-2,\ldots,-(2d-2)$, while for the second factor the corresponding homological degrees are divisible by $2d-1$. Therefore, for each homological degree $m$ there is at most one way to write it as $m=p+q$, where $H^p(\mathbb{C}P^{d-1})\ne 0$ and $H^q(\mathrm{Conf}(n,\mathbb{C}^d\setminus \{a_1,a_2\}))\ne 0$. It follows that the mixed Hodge structure of $H^m(\CGK_d(n))$ is pure of the weight given by the sum of the weights of the mixed Hodge structures of the corresponding $H^p(\mathbb{C}P^{d-1})$ and $H^q(\mathrm{Conf}(n,\mathbb{C}^d\setminus \{a_1,a_2\}))$.
\end{proof}

In the rest of this section, we shall use the work of Westerland to describe an explicit presentation of the operad $H_{\bullet-1}(\CGK_d)$ by generators and relations, to prove that this operad is Koszul, and to determine its Koszul dual. 

\subsection{The presentation by generators and relations}

The main result of this section is the following theorem.

\begin{theorem}\label{th:GravIso}
The operad $\widetilde\Grav_d$ is generated by totally symmetric operations $\gamma_I^a\in \widetilde\Grav_d(I)$, $0\le a\le d-1$, of homological degree $-1-2d(|I|-1)+2d-2a$, subject to the following relations 
\begin{align*}
\sum_{\{i,j\}\subseteq J} 
    \gamma^k_{\underline{n}\setminus\{i,j\}\sqcup\{\star\}}\circ_\star 
            \gamma^{0}_{\{i,j\}} &= \gamma^k_{I\sqcup\{\star\}}
                \circ_\star \gamma^{0}_{J}\quad \text{ for\ $\underline{n}=I\sqcup J$ with $|J|>2$, $0\le k \le d-1$}\\
\sum_{\{i,j\}\subset\underline{n}} 
    \gamma^k_{\underline{n}\setminus\{i,j\}\sqcup\{\star\}}\circ_\star 
            \gamma^{0}_{\{i,j\}} &=0 \quad \text{ for $n\geqslant 3$, $0\le k \le d-1$},\\
   \gamma_{I\sqcup\{\star\}}^a\circ_\star \gamma^{a'}_J&=\gamma^{a+a'}_{I\sqcup\{\star\}}\circ_\star \gamma^{0}_J\quad \text{ for }\ 0\le a+a'\le d-1,\\
   \gamma_{I\sqcup\{\star\}}^a\circ_\star \gamma^{a'}_J&=0\quad \text{ for } a+a'\ge d.   
\end{align*}
\end{theorem}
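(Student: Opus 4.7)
The plan is to construct a surjective operad map from the operad $\calP$ defined by the generators and relations in the statement to $\widetilde\Grav_d$, and then to match dimensions.

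First, using Westerland's theorem \cite{MR3068956} identifying $\widetilde\Grav_d = \mathcal S^{2d}H_{\bullet-1}(\CGK_d)$ with (a $2d$-fold suspension of) the sub-operad of $\Grav_d$ generated by elements of arity at least two, I would define $\phi\colon \calP \to \widetilde\Grav_d$ by sending $\gamma^a_I$ to the class of $c^a g_I$. The homological degrees match by a direct computation using the operadic suspension conventions. Each family of relations is then verified by a short manipulation in $\Grav_d$: the first two families follow from Westerland's generalized gravity relations multiplied by $c^k$, which is unambiguous since $c$ is central ($c\circ_1 g_I = g_I \circ_i c$); the relation $\gamma^a_{I\sqcup\{\star\}} \circ_\star \gamma^{a'}_J = \gamma^{a+a'}_{I\sqcup\{\star\}}\circ_\star \gamma^0_J$ follows by sliding $c^{a'}$ upward from the grafted operation to the root via the same centrality; and the vanishing for $a+a'\ge d$ combines this sliding with $c^d = 0$. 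Surjectivity of $\phi$ is then immediate, because by Westerland's theorem the sub-operad in question is generated by the elements $c^a g_I$, which are precisely the images of our $\gamma^a_I$.

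The main task is injectivity of $\phi$, which I would establish by bounding $\dim \calP(n)$ from above by $\dim \widetilde\Grav_d(n)$ in each homological degree. The relation $\gamma^a \circ \gamma^{a'} = \gamma^{a+a'}\circ \gamma^0$ allows any tree monomial in the generators to be rewritten so that only the root vertex carries a nonzero $a$-parameter, while the vanishing relation caps this root-parameter by $d-1$. The residual ``bottom'' of each normalized tree is a composition of $\gamma^0_I$'s modulo the gravity relations (i)--(ii), and reducing this further produces a classical basis indexed by labelled trees with $n$ leaves, exactly as in Getzler's argument for the $d=1$ gravity operad. Together with the $d$ choices of root parameter $a$, this furnishes a spanning set of $\calP(n)$ whose cardinality in each homological degree matches the known dimension of $\widetilde\Grav_d(n)$, as read off from Westerland's computation of the Poincaré polynomial of $\CGK_d(n)$ (see also the Fadell--Neuwirth-type fibration used in the proof of Proposition~\ref{prop:split}).

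The hardest step is the rigorous spanning-set construction. The cleanest route is the operadic Gröbner bases formalism of \cite{BD,MR2667136} recalled in Section~\ref{sec:rec}: one chooses an admissible ordering on the free shuffle operad generated by $\{\gamma^a_I\}$ that refines a classical path-lex ordering on the underlying tree monomial by the $a$-parameters at each vertex, and verifies that the four families of relations form a Gröbner basis by checking confluence of all S-polynomials. The confluence checks involving only $\gamma^0$-generators reduce to the classical verification for Getzler's gravity relations; the genuinely new overlaps are those of relation (iii) with itself and with the gravity relations, and their resolution reflects the transparent observation that the total $a$-parameter and the underlying tree shape are independently preserved by each rewriting step, modulo the $c^d = 0$ vanishing. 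Once this Gröbner property is in place, counting normal-form monomials closes the argument and, as a byproduct, paves the way for the Koszulness statement proved in the subsequent subsections.
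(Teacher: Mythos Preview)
Your approach is essentially the same as the paper's: construct a surjection from the operad presented by generators and relations onto $\mathcal S^{2d}H_{\bullet-1}(\CGK_d)$ using Westerland's identification (sending $\gamma_I^a$ to the suspension of $g_Ic^a$), then establish injectivity by a shuffle-operad Gr\"obner basis argument that reduces the $\gamma^0$-part to the known $d=1$ gravity computation and handles the $a$-parameters via the sliding relations, so that normal monomials carry a nonzero $a$ only at the root. The paper's execution differs only in that the admissible ordering it uses is more elaborate than a ``path-lex refined by $a$-parameters'': it layers two auxiliary weight gradings and a quantum-monomial word-operad trick on top of the reverse path-permutation lex order to force the correct leading terms in each family of relations, and you should expect to need a comparably delicate construction rather than a single off-the-shelf ordering.
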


\begin{proof}
As mentioned above, according to \cite[Th.~1.2]{MR3068956}, the operad $\{H_{\bullet-1}(\CGK_d(n)(\mathbb{C}))\}_{n\ge 1}$ is isomorphic to the suboperad of $\Grav_d$ generated by all elements of arity greater than one. Note that since the relations of $\Grav_d$ in particular imply that all operations $g_I\in \Grav_d(I)$ are $c$-linear, we have $\Grav_d\cong\Grav_d^0[c]/(c^d)$, where $\Grav_d^0\subset\Grav_d$ is the suboperad generated by all the operations $g_I$. In particular, this immediately implies that the suboperad of $\Grav_d$ generated by all elements of arity greater than one is generated by the operations $g_Ic^a$, where $0\le a\le d-1$. We have
 \[
 |\mathcal S^{2d}g_Ic^a|  =-2d(|I|-1)+2d-1-2a, 
 \]
which is precisely the homological degree of the generator $\gamma_I^a$ of the operad $\widetilde\Grav_d$. Moreover, a direct inspection shows that if we substitute $\mathcal S^{2d}g_Ic^{d-1-a}$ instead of $\gamma_I^a$ into the defining relations of the operad $\widetilde\Grav_d$, those relations are satisfied. Thus, there exists a surjective map of operads
 \[
\widetilde\Grav_d\twoheadrightarrow \mathcal S^{2d}\{H_{\bullet-1}(\CGK_d(n)(\mathbb{C}))\}_{n\ge 1}.
 \]

To proceed, we use a Gröbner basis argument. Let us define an order of shuffle tree monomials on the free operad generated by all $\gamma_I^a$ as a superposition of several partial orders. This is inspired by the proof of \cite[Prop.~3]{MR4392457}, where a Gröbner basis for this operad in the case $d=1$ is given. 

We first define a new weight function which is equal to one on all generators except for $\gamma_I^{0}$ on which it is equal to zero, extend it additively to compositions of generators, and then impose a partial order that compares monomials using the total weight. The only relations of $\calG^{(d)}$ that are not homogeneous with respect to this weight are the relations of the third group, where the leading monomials are the monomials are those on the left hand side, unless $a=0$. 

We then define another weight function that assigns weight $1$ to each generator $\gamma_I^a$ with $|I|>2$, and weight $0$ to each generator $\gamma_I^a$ with $|I|=2$, extend it additively to compositions of generators, and then impose a partial order that compares monomials using the total weight. This ensures that for $2<|J|<n-1$, the monomial on the right hand side of the corresponding relation of the first group of relations of $\calG^{(d)}$ is the leading monomial of that relation. 

Finally, let us consider the monoid of ``quantum monomials'' $\mathsf{QM}=\langle x,y,q\mid xq=qx,yq=qy,yx=xyq\rangle$; it is proved in \cite[Lemma 2.2]{MR4114993} that the binary relation $\prec$ such that 
 \[
x^ky^lq^m\prec x^{k'}y^{l'}q^{m'}\text{ if } k>k' \text{ or } k=k', l<l' \text{ or } k=k', l=l', m<m'     
 \]
makes $\mathsf{QM}$ an ordered monoid (the product is compatible with the order). We shall use the map from the free operad generated by all $\gamma_I^a$ to the word operad associated to $\mathsf{QM}$ sending $\gamma_I^a$ to $(y,y)$ if $|I|=2$ and $\gamma_I^a$ to $(x,x,\ldots,x)$ if $|I|>2$.  According to \cite[Prop.~1.6]{MR4114993}, this makes the shuffle word operad associated to $\mathsf{QM}$ a partially ordered operad, and we may pull back that partial order to a partial order of monomials in the free operad generated by all $\gamma_I^a$. This ensures that for $|S|=n-1$, the monomial on the right hand side of the corresponding relation of the first group of relations of $\calG^{(d)}$ is the leading monomial of that relation. 

The only relations for which our partial orderings do not yet give a definite answer for the leading monomial are the relations of the second group and the relations
 \[
\gamma_{I\sqcup\{\star\}}^k\circ_\star \gamma^{0}_J=\gamma^{0}_{I\sqcup\{\star\}}\circ_\star \gamma^k_J     
 \]
of the third group. For relations of the second group, it is known \cite[Prop.~3]{MR4392457} that it is convenient to choose the order for which the leading term of that relation is
 \[
\gamma^{k}_{\{1,\ldots,n-2,\star\}}\circ_\star\gamma^0_{\{n-1,n\}},    
 \]
which can be accomplished by taking, for instance, the reverse path-permutation lexicographic ordering \cite[Def.~5.4.1.8]{BD}. This will also break a tie between $\gamma_{I\sqcup\{\star\}}^k\circ_\star \gamma^{0}_J$ and $\gamma^{0}_{I\sqcup\{\star\}}\circ_\star \gamma^k_J$ if, in the definition of that ordering, we postulate that $\gamma^a_J>\gamma^b_J$ for $a<b$; for this choice, the leading monomial of the corresponding relation is $\gamma^{0}_{I\sqcup\{\star\}}\circ_\star \gamma^k_J$. 

As we mentioned above, for $d=1$ the relations are known to form a Gröbner basis \cite[Prop.~3]{MR4392457}. We note that the relations of the first and the second group are of the same form as those for $d=1$, and all computations of the S-polynomials between them that one needs to compute when computing the Gröbner basis mimic the computations of S-polynomials in the case $d=1$ (since all terms, including the leading term, have $\gamma^k_I$ with $k>0$ only at the root vertex, all terms of all S-polynomials will satisfy that property, and all corollas are of odd homological degrees, so the signs will be the same as for the gravity operad), so all those S-polynomials reduce to zero. The S-polynomials between elements of the third group and the elements of the first and second group can be seen to reduce to zero directly. Finally, the S-polynomials involving the monomial relations of the fourth group are manifestly reducible to zero. This implies that the defining relations of the shuffle operad associated to $\widetilde\Grav_d$ form a Gröbner basis; in particular, monomials that are normal with respect to those defining relations form a linear basis of that operad. These normal monomials are analogous to the known normal monomials for the gravity operad \cite[Th.~5.16]{MR3084563}, namely, for each arity $n>1$, all normal monomials are of the form
 \[
\gamma_J^a(\lambda_1,\ldots,\lambda_m)\in \widetilde\Grav_d(n),     
 \]
where $\lambda_i$ has inputs in the finite set $I_i$ which all form a partition $\underline{n}=I_1\sqcup\cdots\sqcup I_m$, each element $\lambda_i$ is a shuffle left comb with all vertices labelled $\gamma_{\underline{2}}^{0}$, and moreover $|I_m|=1$. In particular, the operad $\Grav_d^0$ is isomorphic to the suboperad of $\widetilde\Grav_d$ generated by the elements $\gamma_J^{0}$, and hence the number of normal monomials in $\widetilde\Grav_d(n)$ is $d$ times the number of normal monomials in $\Grav_d^0(n)$. But we know that $\Grav_d\cong\Grav_d^0[c]/(c^d)$, so $d\dim \widetilde{\Grav_d}(n)=\dim\Grav_d(n)$, and thus we just proved that for all $n>1$
 \[
\dim\widetilde\Grav_d(n)=\dim\Grav_d(n)=\dim \mathcal S^{2d}H_{\bullet-1}(\CGK_d(n)).
 \]
Thus, our previously constructed surjective map of operads is an isomorphism, which proves the theorem. 
\end{proof}

\begin{corollary}\label{cor:Koszul}
The operad $\widetilde\Grav_d$ is Koszul.
\end{corollary}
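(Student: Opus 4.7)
The plan is to deduce Koszulness directly from the Gröbner basis machinery already deployed in the proof of Theorem \ref{th:GravIso}. The first thing I would point out is that all four groups of defining relations in that presentation are quadratic: every term on either side of each relation is a single partial composition $\gamma^{\bullet}_{\bullet}\circ_\star \gamma^{\bullet}_{\bullet}$ of exactly two generators. Consequently, the Gröbner basis produced in the proof of Theorem \ref{th:GravIso} (which, after the S-polynomial reductions carried out there, coincides with the set of defining relations itself) is a \emph{quadratic} Gröbner basis for the shuffle operad associated to $\widetilde\Grav_d$ with respect to the explicit admissible order constructed in that proof.

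At this stage I would invoke the operadic analogue of Priddy's PBW theorem, proved by Dotsenko and Khoroshkin \cite{MR2667136} and recorded in \cite[Ch.~5]{BD} and \cite[Th.~6.4.10]{LV}: a symmetric operad whose associated shuffle operad admits a quadratic Gröbner basis with respect to some admissible monomial order is Koszul. Applied to $\widetilde\Grav_d$, this criterion yields Koszulness immediately, since the admissibility of the order was already used in the proof of Theorem \ref{th:GravIso} to identify leading monomials and to compute S-polynomials.

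I do not anticipate any genuine obstacle: the only thing to double-check is that the specific superposition of orderings employed in the proof of Theorem \ref{th:GravIso} (the two weight orderings, the quantum-monomial order pulled back via the map $\gamma^a_I\mapsto (y,y)$ or $(x,\ldots,x)$ according to arity, and the reverse path-permutation lexicographic order with the tie-breaking convention $\gamma^a_J>\gamma^b_J$ for $a<b$) is indeed admissible in the sense required for the PBW criterion, but this is precisely what made the Gröbner basis argument run in the first place. Thus the corollary is essentially a one-line consequence of the previous theorem, with the PBW basis of normal monomials described there playing the role of an explicit Poincaré--Birkhoff--Witt basis that certifies Koszulness.
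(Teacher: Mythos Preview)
Your proposal is correct and matches the paper's proof essentially verbatim: the paper also observes that the quadratic Gr\"obner basis for the associated shuffle operad established in Theorem~\ref{th:GravIso} implies Koszulness via \cite{MR2667136}, and that this passes to the symmetric operad. Your additional remarks about checking admissibility of the order are not strictly needed (this was already implicit in the Gr\"obner basis argument), but they do no harm.
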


\begin{proof}
We saw in the proof of Theorem \ref{th:GravIso} that the shuffle operad associated to $\widetilde\Grav_d$ has a quadratic Gröbner basis of relations, and is therefore Koszul \cite{MR2667136}, which implies that the symmetric operad $\widetilde\Grav_d$ is Koszul. 
\end{proof}

\subsection{The Koszul dual operad}
Let us describe the Koszul dual operad. For that, we give the following definition.

\begin{definition}
The operad $\widetilde\HyperCom_d$ is generated by totally symmetric operations $m_I^a\in \widetilde\HyperCom_d(I)$, $0\le a\le d-1$, of homological degree $2(d(|I|-1)-a-1)$, subject to the following relations:
 \[
\sum_{\substack{I\sqcup J =\underline{n} \\ i\in I,\, j,k\in J\\ a+b=p}} 
m_{I\sqcup\{\star\}}^a\circ_\star m_{J}^b=
\sum_{\substack{I\sqcup J = \underline{n} \\ k\in I,\, i,j\in J,\\ a+b=p}} 
m^a_{I\sqcup\{\star\}}\circ_\star m^b_{J}     
 \]
for all $n\ge 3$, all triples of distinct elements $i,j,k\in\underline{n}$, and all $p\in \{0,\ldots,d-1\}$. 
\end{definition}

\begin{proposition}\label{prop:Koszuldual}
We have an operad isomorphism
 \[
\widetilde\Grav_d\cong \mathcal S\widetilde\HyperCom_d^! .
 \]
In particular, the operad $\widetilde\HyperCom_d$ is Koszul. 
\end{proposition}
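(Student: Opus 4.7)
The plan is to verify the stated isomorphism directly from the presentations and then to deduce Koszulity of $\widetilde\HyperCom_d$ from that of $\widetilde\Grav_d$. First I would match up generators. Both $\widetilde\HyperCom_d$ and $\widetilde\Grav_d$ have totally symmetric generators indexed by pairs $(I,a)$ with $|I|\ge 2$ and $0\le a\le d-1$, so there is an obvious bijection between the generators of $\widetilde\HyperCom_d^!$ (which are linear duals of those of $\widetilde\HyperCom_d$ with the degree shift dictated by Koszul duality) and those of $\mathcal S^{-1}\widetilde\Grav_d$. A direct check using that $|m_I^a|=2(d(|I|-1)-a-1)$ and $|\gamma_I^a|=-1-2d(|I|-1)+2d-2a$ confirms that these shifts match. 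Since all generators are totally symmetric on both sides, the sign representation plays no additional role.

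Second, I would verify that the relations of $\mathcal S^{-1}\widetilde\Grav_d$ form the orthogonal complement of the relations of $\widetilde\HyperCom_d$ in the weight-two part of the free operad. It is convenient to grade the quadratic part of both free operads by the sum of the upper indices in a composition $m^a\circ m^{a'}$ (equivalently $\gamma^a\circ\gamma^{a'}$). The Koszul pairing between quadratic elements decomposes as a direct sum with respect to this grading, so it suffices to check orthogonality one value of $p=a+a'$ at a time. The $\widetilde\Grav_d$-relations of types (3) and (4) say precisely that, inside each such graded piece, the class of $\gamma^a_{I\sqcup\{\star\}}\circ_\star\gamma^{a'}_J$ depends only on the sum $a+a'$ and vanishes when $a+a'\ge d$; this is exactly the orthogonal condition to the fact that the $\widetilde\HyperCom_d$-relations involve only the combinations $\sum_{a+b=p}m^a\circ m^b$. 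Once one restricts to the canonical representatives $\gamma^{p}_{I\sqcup\{\star\}}\circ_\star\gamma^{0}_J$, the orthogonality of the remaining relations of types (1) and (2) against the $\widetilde\HyperCom_d$-relations becomes, for each fixed $p$, the classical Getzler--Jones Koszul duality between the gravity and hypercommutative operads.

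Finally, Koszulity of $\widetilde\HyperCom_d$ is immediate from Corollary \ref{cor:Koszul}: Koszulity is preserved under operadic suspension and under Koszul duality of Koszul operads, so $\widetilde\HyperCom_d\cong(\mathcal S^{-1}\widetilde\Grav_d)^!$ is Koszul as well.

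The main obstacle is the orthogonality computation in the second step, whose difficulty is more combinatorial than conceptual. The key simplification is the grading by $p$ on quadratic monomials, which block-diagonalizes the pairing and reduces the problem to $d$ independent copies of the classical $d=1$ case. Some care is needed with signs induced by the suspension $\mathcal S$, but since all generators live in even homological degrees and are totally symmetric, these signs reduce to straightforward parity considerations and do not alter the shape of the orthogonal relations.
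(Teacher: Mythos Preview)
Your plan follows the same skeleton as the paper's proof: match generators, verify that the two sets of quadratic relations are mutual orthogonal complements, and deduce Koszulity from Corollary~\ref{cor:Koszul}. Two points deserve care, however.

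First, the ``obvious bijection'' between generators is not $m_I^a\leftrightarrow\gamma_I^a$; the degree check you mention forces $m_I^a\leftrightarrow\gamma_I^{d-1-a}$, and the paper makes this explicit. This matters for your block-diagonalisation: under the correct identification, the grading by $p=a+a'$ on the $\widetilde\HyperCom_d$ side corresponds to the grading by $2(d-1)-p$ on the $\widetilde\Grav_d$ side, so the ``$d$ independent copies of the $d=1$ case'' picture requires this index flip to be tracked through the orthogonality computation. Relatedly, your remark that ``all generators live in even homological degrees'' is false: the $\gamma_I^a$ have odd degree. The conclusion that signs are harmless is still correct, but for the reason the paper gives (the desuspension removes the sign twist in the pairing), not the one you give.

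Second, orthogonality alone is not enough; you must also know that the relations of $\widetilde\Grav_d$ span the \emph{full} orthogonal complement, which is a dimension statement. The paper does this by an explicit count: it exhibits the basis $\gamma^k_{\underline{n}\setminus\{i,j\}\sqcup\{\star\}}\circ_\star\gamma^0_{\{i,j\}}$ with $\{i,j\}\ne\{n-1,n\}$ of the weight-two quotient of $\widetilde\Grav_d$, and then rewrites the $\widetilde\HyperCom_d$ relations in a manifestly independent form indexed by the same set. Your appeal to the classical Getzler--Jones duality can replace this count, but only once you have cleanly reduced (after the index shift above and after quotienting by relations (3) and (4)) each block to a genuine copy of the $d=1$ weight-two pairing; as written, that reduction is asserted rather than carried out.
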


\begin{proof}
The Koszul dual operad $\widetilde\HyperCom_d^!$ is generated by the operadic desuspension of the duals of suspensions of generators of $\widetilde\HyperCom_d$, so the operadic suspension of the operad $\widetilde\HyperCom_d^!$ is generated by the suspensions of generators of $\widetilde\HyperCom_d$. The generator $m_I^a$ has the homological degree $2(d(|I|-1)-a-1)$, so the dual of its suspension has the homological degree $-2(d(|I|-1)-a-1)-1=2a+1-2d(|I|-1)$, which suggests that the duality should relate it to $\gamma_I^{d-1-a}$, which we shall do throughout the proof.

Furthermore, we note that the relations of $\widetilde\Grav_d$ annihilate the relations of $\widetilde\HyperCom_d$ under the pairing between the weight two components of the respective free operads (the desuspension in particular ensures that we must compute the usual pairing without sign twists). Moreover, examining the relations of $\widetilde\Grav_d$, we immediately see that for a basis of the vector space of elements of arity $n$ and weight two in the quotient operad, we may take all elements of the form $\gamma^k_{\underline{n}\setminus\{i,j\}\sqcup\{\star\}}\circ_\star \gamma^{0}_{\{i,j\}}$ with $\{i,j\}\ne \{n-1,n\}$ and $k=0,\ldots,d-1$. Let us explain that there is the exact same number of linearly independent relations of $\widetilde\HyperCom_d$. Indeed, if we add the element
 \[
 \sum_{\substack{I\sqcup J =\underline{n} \\ i,j,k\in J\\ a+b=p}} m_{I\sqcup\{\star\}}^a\circ_\star m_{J}^b    
 \]
to both sides of a relation of $\widetilde\HyperCom_d$ indexed by $i,j,k$, that relation becomes
 \[
\sum_{\substack{I\sqcup J =\underline{n} \\ j,k\in J\\ a+b=p}} 
m_{I\sqcup\{\star\}}^a\circ_\star m_{J}^b=
\sum_{\substack{I\sqcup J = \underline{n} \\ i,j\in J,\\ a+b=p}} 
m^a_{I\sqcup\{\star\}}\circ_\star m^b_{J} ,    
 \]
showing that the element
 \[
\sum_{\substack{I\sqcup J =\underline{n} \\ i,j\in J\\ a+b=p}} 
m_{I\sqcup\{\star\}}^a\circ_\star m_{J}^b     
 \]
does not depend on $i$. It then easily follows that all these relations are equivalent to the linearly independent
relations
 \[
\sum_{\substack{I\sqcup J =\underline{n} \\ i,j\in J\\ a+b=p}} 
m_{I\sqcup\{\star\}}^a\circ_\star m_{J}^b=
\sum_{\substack{I\sqcup J = \underline{n} \\ n-1,n\in J,\\ a+b=p}} 
m^a_{I\sqcup\{\star\}}\circ_\star m^b_{J} ,    
 \]
and we have these relations for $\{i,j\}\ne \{n-1,n\}$ and $0\le p\le d-1$, which is precisely the data indexing the basis indicated above. This completes the proof of the Koszul duality between the two operads. Since the operadic (de)suspension and passing to the Koszul dual do not affect the Koszul property, Corollary \ref{cor:Koszul} implies that the operad $\widetilde\HyperCom_d$ is Koszul.
\end{proof}

Results of Getzler \cite{MR1363058} imply that the operad $\widetilde\HyperCom_1$, known as $\HyperCom$, is isomorphic to the homology operad of the operad of Deligne--Mumford spaces $\overline{\calM}_{0,n+1}$. In what follows, we shall see that for $d>1$ the homology operad of the operad of Chen--Gibney--Krashen spaces is a nontrivial deformation of the operad $\widetilde\HyperCom_d$.

\section{The hypercommutative operad and its properties}\label{sec:CGK}

In this section, we shall discuss various properties of the operad 
 \[
\HyperCom_d:= H_\bullet(\bCGK_d),    
 \]
which we call the $d$-dimensional hypercommutative operad. We begin with giving a presentation of this operad by generators and relations.

\begin{theorem}\label{th:CGK}
The operad $\HyperCom_d$ is generated by totally symmetric operations 
 \[
\mu^{(p)}_n\in \HyperCom_d(n),\quad p\ge 0,    
 \]
of homological degree $2(d(n-1)-1-p)$, whose defining relations declare that for each $n$, for each $i\ne j\in\underline{n}$, and for each $m\ge d$, we have
 \[
\sum_{q\ge1}\sum_{\substack{S_1\sqcup \cdots\sqcup S_q=\underline{n},\\S_1,\ldots,S_q\ne\varnothing\colon\{i,j\}\subseteq S_1
}}
\sum_{p_1+\cdots+p_q+q-1=m}
\mu^{(p_q)}_{S_q\sqcup\{\star\}}\circ_\star\cdots\circ_\star\mu^{(p_2)}_{S_2\sqcup\{\star\}}\circ_\star\mu^{(p_1)}_{S_1}=0.
 \]
\end{theorem}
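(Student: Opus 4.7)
The plan is to identify the generators as Poincaré duals of explicit cohomology classes, translate the stated relations into a combinatorial identity in the De~Concini--Procesi presentation of $H^\bullet(\bCGK_d(n))$, and establish the matching dimension bound via a Gröbner basis reduction. First, I would set $\mu_n^{(p)}:=\mathrm{PD}(\sigma_{\underline{n}}^p)$; since $\sigma_{\underline{n}}^p$ has cohomological degree $-2p$, its Poincaré dual lies in homological degree $2(d(n-1)-1-p)$, matching the statement. Iterating Proposition~\ref{prop:operad-on-cohomology}, the left-comb composition $\mu^{(p_q)}_{S_q\sqcup\{\star\}}\circ_\star\cdots\circ_\star\mu^{(p_1)}_{S_1}$ corresponds in cohomology to $\prod_{k=1}^{q-1}x_{T_k}\cdot\prod_{k=1}^q\sigma_{T_k}^{p_k}$, where $T_k:=S_1\sqcup\cdots\sqcup S_k$.

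The heart of the proof is then the generating-series identity that the sum $\Psi(\{i,j\},\underline{n})(t):=\sum\prod_{k=1}^{q-1}(x_{T_k}t)\cdot\prod_{k=1}^q(1-\sigma_{T_k}t)^{-1}$ over chains $\{i,j\}\subseteq T_1\subsetneq\cdots\subsetneq T_q=\underline{n}$ equals $(1-\sigma_{\{i,j\}}t)^{-1}$ modulo the Stanley--Reisner relations $x_Hx_{H'}=0$ for overlapping incomparable $H,H'$. Extracting the coefficient of $t^m$, its left-hand side is precisely the cohomology class corresponding to the left-hand side of the theorem's relation, while the right-hand side is $\sigma_{\{i,j\}}^m$, which vanishes in $H^\bullet(\bCGK_d(n))$ for $m\ge d$ by the defining relation $\sigma_{\{i,j\}}^d=0$. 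I would prove this generating-series identity by induction on $|\underline{n}|-|K|$ with $K=\{i,j\}$, splitting chains by whether the first node equals or strictly contains $K$; the crucial step is the partial-fraction identity $\frac{x_{K'}t}{(1-\sigma_{K'}t)(1-\tau_{K'}t)}=\frac{1}{1-\sigma_{K'}t}-\frac{1}{1-\tau_{K'}t}$ with $\tau_{K'}:=\sigma_{K'}-x_{K'}=\sum_{H\supsetneq K'}x_H$, which collapses the sum over chains strictly above $K$ into a telescoping expression equal to $(1-\tau_K t)^{-1}$. The Stanley--Reisner relations enter only via the multinomial identity $\tau_K^m=\sum_{K'\supsetneq K}(\sigma_{K'}^m-\tau_{K'}^m)$, an immediate consequence of $x_Hx_{H'}=0$ for incomparable $H,H'$.

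Let $\mathcal{P}$ denote the operad presented by the theorem; the previous steps give a surjection $\mathcal{P}\twoheadrightarrow\HyperCom_d$. For the matching upper bound I would adapt the strategy used in the proof of Theorem~\ref{th:GravIso}, setting up an admissible ordering on the associated shuffle operad for which, for each $m\ge d$, the leading term of the relation indexed by $(n,m,i,j)$ is the single summand $\mu_n^{(m)}$ coming from $q=1$. Successive rewriting then eliminates every generator $\mu_n^{(p)}$ with $p\ge d$ in favour of compositions of generators with $0\le p\le d-1$, while the difference of two relations sharing the same leading term but indexed by distinct pairs $(i,j)\neq(i',j')$ recovers a hypercommutativity relation of $\widetilde\HyperCom_d$ in the sense of Proposition~\ref{prop:Koszuldual}. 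The normal monomials with respect to the resulting Gröbner basis are then in bijection with a Feichtner--Yuzvinsky-type basis of $H^\bullet(\bCGK_d(n))$ indexed by nested sets with appropriately bounded exponents, yielding $\dim\mathcal{P}(n)\le\dim H_\bullet(\bCGK_d(n))$ and forcing the surjection to be an isomorphism. The main obstacle is the completeness check for this Gröbner basis: all S-polynomials among the relations for $m\ge d$ must reduce to zero, and the normal monomials must match the FY basis exactly, which requires careful bookkeeping of signs and combinatorial multiplicities inherited from the cohomological composition formula.
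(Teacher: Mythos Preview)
Your derivation of the relations is essentially the paper's own argument, repackaged as a generating-function identity: the paper sets $\mu_n^{(m)}=\mathrm{PD}(\sigma_{\underline{n}}^m)$ and $\nu_{n,i,j}^{(m)}=\mathrm{PD}(\sigma_{\{i,j\}}^m)$, expands $\sigma_{\{i,j\}}^m-\sigma_{\underline{n}}^m$ using $\sigma_{\{i,j\}}-\sigma_{\underline{n}}=\sum_{\{i,j\}\subseteq J\subsetneq\underline{n}}x_J$ together with Proposition~\ref{prop:operad-on-cohomology}, and iterates to obtain exactly the left-comb sum equal to $\nu_{n,i,j}^{(m)}$, which vanishes for $m\ge d$. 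Your telescoping partial-fraction argument encodes the same induction; this part is fine.

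Where you diverge from the paper is the upper bound, and here your plan is genuinely incomplete while the paper's route avoids the difficulty entirely. You propose a direct Gr\"obner basis argument on the presentation $\mathcal P$, taking $\mu_n^{(m)}$ (the $q=1$ term) as leading term and then handling the differences of relations with the same leading term. But, as you yourself flag, those differences are not the quadratic relations of $\widetilde\HyperCom_d$; they carry higher-order corrections, and you have not checked any S-polynomials. The paper does \emph{not} attempt a Gr\"obner basis for $\mathcal P$. Instead it filters $\mathcal P$ by the number of generators in a composition, observes that the quadratic parts of the differences are precisely the $\widetilde\HyperCom_d$ relations (so $\mathrm{gr}\,\mathcal P$ is a quotient of $\widetilde\HyperCom_d$), and then obtains the matching dimension count geometrically: Petersen's spectral sequence for the stratified space $\bCGK_d(n)$ has $E_1$ equal to the bar construction of $(\widetilde\HyperCom_d^{\text{!`}})^*\cong\widetilde\Grav_d$, which by Corollary~\ref{cor:Koszul} has homology $\widetilde\HyperCom_d^*$, and the spectral sequence abuts on $E_2$ by parity. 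This gives $\dim H_\bullet(\bCGK_d(n))=\dim\widetilde\HyperCom_d(n)$ directly, so $\dim\mathcal P(n)\le\dim\widetilde\HyperCom_d(n)=\dim\HyperCom_d(n)$ and the surjection is an isomorphism. In other words, the Gr\"obner work was already done once, for $\widetilde\Grav_d$ in Theorem~\ref{th:GravIso}; the paper leverages that via Koszul duality and the spectral sequence rather than repeating it for the non-quadratic presentation of $\HyperCom_d$. Your proposed route might be salvageable, but as written it is a plan with its hardest step left open, whereas the paper's argument is complete and considerably shorter.
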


\begin{proof}
Let us define, for a given $n\ge 2$, for a subset $\{i,j\}\subseteq\underline{n}$, and for an integer $k\ge 0$, the following elements in the operad $\HyperCom_d$:
\begin{gather*}
\mu_n^{(m)}:=\mathrm{PD}(\sigma_{\{1,\ldots,n\}}^m),\\ 
\nu_{n,i,j}^{(m)}:=\mathrm{PD}(\sigma_{\{i,j\}}^m). 
\end{gather*}
Note that we have 
\begin{multline*}
\nu_{n,i,j}^{(m)}=\mathrm{PD}(\sigma_{\{i,j\}}^m)\\=\mathrm{PD}(\sigma_{\{i,j\}}^m-\sigma_{\{1,\ldots,n\}}^m+\sigma_{\{1,\ldots,n\}}^m)
=\mathrm{PD}\left(\sum_{\{i,j\}\subseteq J\subsetneq\underline{n}}x_J\sum_{a=1}^{m}\sigma_{\{i,j\}}^{a-1}\sigma_{\{1,\ldots,n\}}^{m-a}\right)+
\mathrm{PD}(x_{\{1,\ldots,n\}}^m)\\=\sum_{\{i,j\}\subseteq J\subsetneq\underline{n}}\mathrm{PD}\left(x_J\sum_{a=1}^{m}\sigma_{\{i,j\}}^{a-1}\sigma_{\{1,\ldots,n\}}^{m-a}\right)+\mu_n^{(m)}
=\sum_{\{i,j\}\subseteq J\subsetneq\underline{n}}\sum_{a=1}^{m}\mu_{J^c\sqcup\{\star\}}^{(m-a)}\circ_\star\nu_{J,i,j}^{(a-1)}+\mu_n^{(m)}.
\end{multline*}
Here we used the fact that $\sigma_{\underline{n}}=x_{\underline{n}}$, so that 
 \[
\sigma_{\{i,j\}}-\sigma_{\{1,\ldots,n\}}=\sum_{\{i,j\}\subseteq J\subsetneq\underline{n}}x_J,     
 \]
as well as the description of the operad structure in terms of cohomology rings from Proposition~\ref{prop:operad-on-cohomology}. The elements $\nu_{J,i,j}^{(a-1)}$ in this formula can be expanded using the same scheme of computation, and, since we clearly have $\mu_{\{i,j\}}^{(m)}=\nu_{\{i,j\},i,j}^{(m)}$, an easy inductive argument gives us the formula 
 \[
\nu_{n,i,j}^{(m)}=
\sum_{q\ge1}\sum_{\substack{S_1\sqcup \cdots\sqcup S_q=\underline{n},\\S_1,\ldots,S_q\ne\varnothing\colon\{i,j\}\subseteq S_1
}}
\sum_{p_1+\cdots+p_q=m-q+1}
\mu^{(p_q)}_{S_q\sqcup\{\star\}}\circ_\star\cdots\circ_\star\mu^{(p_2)}_{S_2\sqcup\{\star\}}\circ_\star\mu^{(p_1)}_{S_1}.
 \]
Recalling that $\sigma_{\{i,j\}}^m=0$ for $m\ge d$, we see that the relations listed in the statement of the theorem do indeed hold in the operad $\HyperCom_d$. It remains to show that there are no other relations. Let us make several observations. First of all, writing the relation above as 
 \[
\sum_{q\ge2}\sum_{\substack{S_1\sqcup \cdots\sqcup S_q=\underline{n},\\S_1,\ldots,S_q\ne\varnothing\colon\{i,j\}\subseteq S_1
}}
\sum_{p_1+\cdots+p_q=m-q+1}
\mu^{(p_q)}_{S_q\sqcup\{\star\}}\circ_\star\cdots\circ_\star\mu^{(p_2)}_{S_2\sqcup\{\star\}}\circ_\star\mu^{(p_1)}_{S_1}=-\mu^{(m)}_n, 
 \]
we see that it is possible to express each operation $\mu^{(m)}_n$ with $m\ge d$ via operadic compositions of operations of lower arities. Thus, the operad $\HyperCom_d$ is generated by the operations $\mu^{(p)}_n$ with $0\le p\le d-1$. Furthermore, using the fact that the operation $\mu^{(m)}_n$ is totally symmetric, we see that 
  \[
\sum_{q\ge2}\sum_{\substack{S_1\sqcup \cdots\sqcup S_q=\underline{n},\\S_1,\ldots,S_q\ne\varnothing\colon\{i,j\}\subseteq S_1
}}
\sum_{p_1+\cdots+p_q=m-q+1}
\mu^{(p_q)}_{S_q\sqcup\{\star\}}\circ_\star\cdots\circ_\star\mu^{(p_2)}_{S_2\sqcup\{\star\}}\circ_\star\mu^{(p_1)}_{S_1}
 \]
does not depend on $i,j$. Rewriting this relation in terms of the minimal set of generators $\mu^{(p)}_n$ with $0\le p\le d-1$ does not seem to lead to elegant formulas, but we can keep track of the quadratic component of this expression (that is, the combination of terms involving two operations $\mu_n^{(p)}$ from the minimal set of generators). Indeed, rewriting redundant generators will turn quadratic terms into terms that are compositions of at least three operations, so we may safely ignore those. Thus, the quadratic component of this expression is 
  \[
\sum_{\substack{S_1\sqcup S_2=\underline{n},\\S_1,S_2\ne\varnothing\colon\{i,j\}\subseteq S_1
}}
\sum_{0\le p_1,p_2\le d-1, p_1+p_2=m-1}
\mu^{(p_2)}_{S_2\sqcup\{\star\}}\circ_\star\mu^{(p_1)}_{S_1}, 
 \]
so for $d\le m\le 2d-1$, the assertion that this does not depend on $i,j$ is equivalent to the defining relations of the operad $\widetilde\HyperCom_d$, see the proof of Proposition \ref{prop:Koszuldual}, and for $m\ge 2d$ every quadratic element will be a composition of two generators at least one of which is redundant, so the quadratic parts of the relations are trivial. Thus, if we define a decreasing filtration of our operad by the number of generators involved in a composition, in the associated graded operad the cosets of the operations $\mu_n^{(p)}$ with $0\le p\le d-1$ satisfy the relations of the operad $\widetilde\HyperCom_d$. To show that the operad $\HyperCom_d$ has no other relations, it is enough to show that the Betti numbers of the space $\bCGK_d(n)(\mathbb{C})$ are equal to the dimensions of graded components of $\widetilde\HyperCom_d$. 

Our argument relies on the following observation of Petersen \cite[Ex.~3.9]{MR3654116} giving a general framework to an argument of Getzler and Jones \cite[Sec.~3.3]{getzler1994operadshomotopyalgebraiterated}. Suppose that we have a topological operad $\{X(n)\}$ stratified in a way that the strata of $X(n)$ correspond to rooted trees with $n$ leaves, the closed stratum $X(\tau)$ corresponding to a tree $\tau$ satisfies
 \[
X(\tau)\cong \prod_{v\in V(\tau)}X(\mathrm{in}(v)),
 \]
and the composition maps in the operad are given tautologically by inclusions of closed strata. Let us denote by $Y(n)$ the open stratum corresponding to the $n$-corolla (the unique tree with one vertex). We have a cooperad structure on $\{H^\bullet_c(X(n))\}$ coming from the operad structure on $\{X(n)\}$ and an operad structure on $\{H^{\bullet+1}_c(Y(n))\}$ coming from the fact that the open stratum $Y(I\sqcup\{\star\})\times Y(J)$ is adjacent to $Y(I\sqcup J)$, and one can consider the connecting homomorphism in the long exact sequence of a pair
 \[
H^{\bullet}_c(Y(I\sqcup\{\star\})\times Y(J))) \to H^{\bullet-1}_c(Y(I\sqcup J)).   
 \] 
According to \cite[Ex.~3.9]{MR3654116}, the spectral sequence of the filtration on $X(n)$ by the number of vertices of a tree has as its page $E_1^{(1)}$ the bar construction of the operad $\{H^{\bullet+1}_c(Y(n))\}$ and as its page $E_\infty^{(1)}$ the associated graded for a filtration on $\{H^\bullet_c(X(n))\}$. We apply this general observation in the case where $X(n)=\bCGK_d(n)$ and $Y(n)=\CGK_d(n)$. As we indicated above, the operad structure on $\{H^{\bullet+1}_c(\CGK_d(n))\}$ is identified under the Poincaré duality isomorphism with the operad $\mathcal S^{2d}\{H_{\bullet-1}(\CGK_d(n))\}=\mathcal S^{2d}\widetilde\Grav_d$.
According to Proposition \ref{prop:Koszuldual} and the usual formulas for Koszul dual (co)operads \cite[Sec.~7.2.3]{LV}, we have 
 \[
\mathcal S^{2d}\widetilde\Grav_d\cong \mathcal S\widetilde\HyperCom_d^!=(\widetilde\HyperCom_d^{\ac})^*.  
 \]
Moreover, according to Corollary \ref{cor:Koszul} and Proposition \ref{prop:Koszuldual}, the operads $\widetilde\Grav_d$ and $\widetilde\HyperCom_d$ are Koszul, and therefore the homology of the bar construction of the operad $(\widetilde\HyperCom_d^{\ac})^*$ is isomorphic to the cooperad $\widetilde\HyperCom_d^*$. The underlying graded vector space of that cooperad is supported at even homological degrees, so there is no room for further differentials in the spectral sequence $E^{(1)}$. Thus, it abuts on the second page, and therefore the associated graded for a filtration on $H^{\bullet}_c(\bCGK_d)$ is isomorphic to the cooperad $\widetilde\HyperCom_d^*$, which gives the desired coincidence of the Betti numbers of $\bCGK_d(n)$ with the dimensions of graded components of $\widetilde\HyperCom_d(n)$. We conclude that no other relations are satisfied in the operad $\HyperCom_d$, as required. 

\end{proof}

As an example, let us consider the case $d=2$. We saw in the proof that the minimal set of generators of the operad $\HyperCom_2$ consists of the generators $\mu_n^{(0)}$ and $\mu_n^{(1)}$ for all $n\ge 2$. Let us demonstrate how some of the first relations between these generators are obtained. In arity $3$, among the relations of Theorem \ref{th:CGK}, we have 
\begin{gather*}
\mu_3^{(2)}+\mu_2^{(1)}\circ_1\mu_2^{(0)}+ \mu_2^{(0)}\circ_1\mu_2^{(1)}=0,\\
\mu_3^{(2)}+\mu_2^{(1)}\circ_2\mu_2^{(0)}+ \mu_2^{(0)}\circ_2\mu_2^{(1)}=0,\\
\mu_3^{(3)}+\mu_2^{(2)}\circ_1\mu_2^{(0)}+ \mu_2^{(1)}\circ_1\mu_2^{(1)}+ \mu_2^{(0)}\circ_1\mu_2^{(2)}=0,\\    
\mu_3^{(3)}+\mu_2^{(2)}\circ_2\mu_2^{(0)}+ \mu_2^{(1)}\circ_2\mu_2^{(1)}+ \mu_2^{(0)}\circ_2\mu_2^{(2)}=0.    
\end{gather*}
Note that $\mu_2^{(2)}=0$, so these relations imply
\begin{gather*}
\mu_2^{(1)}\circ_1\mu_2^{(0)}+ \mu_2^{(0)}\circ_1\mu_2^{(1)}=\mu_2^{(1)}\circ_2\mu_2^{(0)}+ \mu_2^{(0)}\circ_2\mu_2^{(1)},\\
\mu_2^{(1)}\circ_1\mu_2^{(1)}= \mu_2^{(1)}\circ_2\mu_2^{(1)}.    
\end{gather*}
In arity $4$, among the relations of Theorem \ref{th:CGK}, we have (partially simplifying using the relations $\mu_2^{(2)}=0$ and $\mu_3^{(2)}+\mu_2^{(1)}\circ_2\mu_2^{(0)}+ \mu_2^{(0)}\circ_2\mu_2^{(1)}=0$)
\begin{gather*}
\mu_4^{(2)}+\mu_3^{(1)}\circ_1\mu_2^{(0)}+\mu_3^{(0)}\circ_1\mu_2^{(1)}+\bigl(\mu_2^{(1)}\circ_1\mu_3^{(0)}+\mu_2^{(0)}\circ_1\mu_3^{(1)}+
  \mu_2^{(0)}\circ_1\mu_2^{(0)}\circ_1\mu_2^{(0)}\bigr).(1+(3,4))=0,\\     
\mu_4^{(3)}+\mu_3^{(2)}\circ_1\mu_2^{(0)}+\mu_3^{(1)}\circ_1\mu_2^{(1)}+\bigl(\mu_2^{(1)}\circ_1\mu_3^{(1)}+
  \mu_2^{(1)}\circ_1\mu_2^{(0)}\circ_1\mu_2^{(0)}\bigr).(1+(3,4))=0,
\end{gather*}
and simplifying further gives us the relations
\begin{multline*}
\mu_3^{(1)}\circ_1\mu_2^{(0)}+\mu_3^{(0)}\circ_1\mu_2^{(1)}+(\mu_2^{(1)}\circ_1\mu_3^{(0)}+\mu_2^{(0)}\circ_1\mu_3^{(1)}).(3,4)+\bigl(
  \mu_2^{(0)}\circ_1\mu_2^{(0)}\circ_1\mu_2^{(0)}\bigr).(1+(3,4))=\\     
\mu_3^{(1)}\circ_2\mu_2^{(0)}+\mu_3^{(0)}\circ_2\mu_2^{(1)}+(\mu_2^{(1)}\circ_2\mu_3^{(0)}+\mu_2^{(0)}\circ_2\mu_3^{(1)}).(1,4)+\bigl(
  (\mu_2^{(0)}\circ_1\mu_2^{(0)})\circ_2\mu_2^{(0)}\bigr).(1+(1,4))
\end{multline*}
and 
\begin{multline*}
\mu_3^{(1)}\circ_1\mu_2^{(1)}+(\mu_2^{(1)}\circ_1\mu_3^{(1)}+\mu_2^{(1)}\circ_1\mu_2^{(0)}\circ_1\mu_2^{(0)}).(3,4)-\mu_2^{(0)}\circ_1\mu_2^{(1)}\circ_1\mu_2^{(0)}=\\
\mu_3^{(2)}\circ_2\mu_2^{(0)}+\mu_3^{(1)}\circ_2\mu_2^{(1)}+(\mu_2^{(1)}\circ_1\mu_3^{(1)}+\mu_2^{(1)}\circ_1\mu_2^{(0)}\circ_1\mu_2^{(0)}).(1,4)-(\mu_2^{(0)}\circ_1\mu_2^{(1)})\circ_2\mu_2^{(0)}
\end{multline*}
At this point it is natural to note that, while there is no freedom in choosing the generator $\mu_n^{(0)}$, since the corresponding component of the operad $\HyperCom_d$ is one-dimensional, there are some choices for the generator $\mu_n^{(p)}$ with $p>0$. For example, in the case $d=2$ that we are considering, the generator $\mu_3^{(1)}$ has degree $4$, and so in principle we can add to it a scalar multiple of the combination 
 \[
(\mu_2^{(0)}\circ_1\mu_2^{(0)})(1+(1,2,3)+(1,3,2)),   
 \]
which is the only decomposable element of the same arity and degree that is totally symmetric in its arguments. A simple direct verification shows that such a modification does not lead to any major simplification of the relations of the operad $\HyperCom_2$. 

We now continue to exhibit a certain type of Koszul duality between the operads $\HyperCom_d$ and $\widetilde\Grav_d$. Since we just established that the operad $\HyperCom_d$ is not even quadratic for $d>1$, these operads cannot be Koszul dual literally. However, quadratic parts of the relations of the operad $\HyperCom_d$ are precisely the relations of the operad $\widetilde\HyperCom_d$ which is Koszul and has $\widetilde\Grav_d^*$ as its Koszul dual cooperad, which suggests to look for the correct result in the realm of homotopy Koszul operads \cite[Sec.~5.4]{MR2560406}. This is exactly what we shall do. Precisely, we recall that there is a notion of operads up to homotopy that goes back to van der Laan \cite{vdLaan}, see \cite[Sec.~4]{MR2560406} and \cite[Sec.~10.5]{LV} for more details. One can think about this notion using groupoid coloured operads \cite{MR3134040,MR4425832} as follows. One may encode operads using an operad $\mathsf{Op}$ coloured by the groupoid $\mathbb{S}$ of finite sets and bijections; this operad is Koszul, and operads up to homotopy are algebras over the coloured operad $\mathsf{Op}_\infty$ which is the cobar construction of the Koszul dual cooperad of $\mathsf{Op}$. Similarly to how in associative algebras the only basic structure operation is the binary product and in $A_\infty$-algebras there are operations with any number of arguments, basic operation of an operad are infinitesimal compositions along 2-vertex trees, while an $\mathsf{Op}_\infty$-structure involves higher operations corresponding to any possible rooted tree. 

A consequence of the Koszul property of the operad $\mathsf{Op}$ is that an $\mathsf{Op}_\infty$-structure can be transferred from a chain complex to its homology using the general framework of the homotopy transfer theorem \cite[Sec.~10.3]{LV}. As always, an $\mathsf{Op}_\infty$-structure on a chain complex with zero differential is said to be minimal. To state the following result, we also need to recall that, similarly to how an $A_\infty$-coalgebra on a vector space $V$ is the same as a differential on $\overline{T}(s^{-1}V)$, making this free algebra into a dg algebra, the cobar construction $\Omega_\infty(V)$, an $\mathsf{Op}_\infty$-coalgebra on $\calV$ is the same as a differential on $\overline{\calT}(s^{-1}\calV)$, making this free operad into a dg operad, the cobar construction $\Omega_\infty(\calV)$.

\begin{theorem}\label{th:MHShigher}
The operad structure on 
 \[
\widetilde\Grav_d\cong \mathcal{S}^{2d}H_{\bullet-1}(\CGK_d)     
 \]
can be extended to a minimal $\mathsf{Op}_\infty$-structure for which we have a quasi-isomorphism
 \[
\Omega_\infty(\widetilde\Grav_d^*)\sim \HyperCom_d.    
 \] 
\end{theorem}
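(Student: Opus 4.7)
My plan is to construct the minimal $\mathsf{Op}_\infty$-structure via operadic homotopy transfer applied to a dg cooperad model of $\bCGK_d$ adapted to the stratification, and to deduce the quasi-isomorphism by a bar-cobar argument. The key geometric input is the splitting of the mixed Hodge structure from Proposition \ref{prop:split}.

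First, I fix a dg cooperad model $\calC^\bullet$ quasi-isomorphic to $\HyperCom_d^*$ (for instance via Sullivan-type PA-forms or a logarithmic model) that is compatible with the stratification of $\bCGK_d(n)$ by open strata $\CGK_d(\tau)$. The decreasing filtration of $\calC^\bullet$ by the number of vertices of $\tau$ is a filtration by sub-dg-cooperads; as in the proof of Theorem \ref{th:CGK}, the associated spectral sequence has $E_1$-page equal to the dual of the bar construction of the operad $\mathcal{S}^{2d}\widetilde\Grav_d \cong (\widetilde\HyperCom_d^{\ac})^*$, and it collapses at $E_2$ because Koszulness (Corollary \ref{cor:Koszul}) identifies $E_2$ with $\widetilde\HyperCom_d^*$, which is concentrated in even homological degrees. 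Proposition \ref{prop:split} provides a canonical splitting of the weight filtration, upgrading the $E_2$-collapse to a strong deformation retract of $\calC^\bullet$ onto a representative whose underlying graded cooperad is $\widetilde\HyperCom_d^*$.

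Second, I apply the operadic homotopy transfer theorem for $\mathsf{Op}_\infty$-cooperads (van der Laan \cite{vdLaan}; see also \cite[Sec.~10.5]{LV}) to this contraction data. This produces a minimal $\mathsf{Op}_\infty$-cooperad structure on $\widetilde\HyperCom_d^* \cong \mathcal{S}^{-2d}\widetilde\Grav_d^*$ whose binary cooperation is the strict cooperad structure; linearly dualising and undoing the operadic suspension yields a minimal $\mathsf{Op}_\infty$-operad structure on $\widetilde\Grav_d$ extending its usual operad structure. By general properties of bar-cobar adjunctions for $\mathsf{Op}_\infty$-(co)operads, the cobar $\Omega_\infty$ applied to this homotopy cooperad recovers the original dg cooperad $\calC^\bullet$ up to quasi-isomorphism, and hence is quasi-isomorphic to $\HyperCom_d$.

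The main technical obstacle is to match the binary part of the transferred structure with the already-known operad structure on $\widetilde\Grav_d$ obtained from $H_{\bullet-1}(\CGK_d)$ via Westerland's transfer map. This requires choosing the contraction data in the first step so that the induced isomorphism on $E_1$-pages is the one implemented by Poincar\'e duality and the Koszul duality of Proposition \ref{prop:Koszuldual}. Once this compatibility is in place, the higher (ternary and above) $\mathsf{Op}_\infty$-operations arise canonically from the Hodge-theoretic splitting; they vanish when the MHS of $\CGK_d$ is pure (the case $d=1$, recovering Getzler's theorem) and in general measure precisely its failure to be so.
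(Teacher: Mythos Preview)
There is a genuine gap: you are transferring from the wrong dg cooperad, and the isomorphism $\widetilde\HyperCom_d^* \cong \mathcal{S}^{-2d}\widetilde\Grav_d^*$ you invoke in the second step is false. The species $\widetilde\HyperCom_d$ and $\widetilde\Grav_d$ are Koszul dual (Proposition~\ref{prop:Koszuldual}), not suspensions of one another; already their arity-wise dimensions differ (they are the Betti numbers of $\bCGK_d(n)$ versus $\CGK_d(n)$). Consequently, homotopy transfer along a contraction of your $\calC^\bullet$ onto its homology produces a minimal $\mathsf{coOp}_\infty$-structure on $H_\bullet(\calC^\bullet)\cong\HyperCom_d^*$, and the cobar of that recovers nothing new: it is just $\infty$-isomorphic to the strict cooperad $\HyperCom_d^*$ you started with. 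No $\mathsf{Op}_\infty$-structure on $\widetilde\Grav_d$ is produced this way, and Proposition~\ref{prop:split} plays no role in this step (a contraction of $\calC^\bullet$ onto $\HyperCom_d^*$ exists over $\mathbb{Q}$ regardless).

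The paper's proof runs the transfer on the \emph{other} side of the bar--cobar adjunction. One uses Petersen's second spectral sequence $E^{(2)}$, whose $E_1$-page is the cobar construction $\Omega(\HyperCom_d^*)$ and whose $E_\infty$ is the associated graded for the weight filtration on $H^{\bullet+1}_c(\CGK_d)$; this is Poincar\'e dual to Deligne's spectral sequence and hence degenerates at $E_2$. The split mixed Hodge structure of Proposition~\ref{prop:split} is what identifies that associated graded with $H^{\bullet+1}_c(\CGK_d)\cong\widetilde\Grav_d$ itself as an operad, so that $H_\bullet(\mathsf{B}(\HyperCom_d))\cong\widetilde\Grav_d^*$. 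Only then does homotopy transfer from the bar construction $\mathsf{B}(\HyperCom_d)$ to its homology $\widetilde\Grav_d^*$ give the desired minimal $\mathsf{coOp}_\infty$-structure, and $\Omega_\infty(\widetilde\Grav_d^*)\sim\Omega\mathsf{B}(\HyperCom_d)\sim\HyperCom_d$ follows. In short: replace $\calC^\bullet$ by $\mathsf{B}(\HyperCom_d)$ (equivalently $\Omega(\HyperCom_d^*)$), use $E^{(2)}$ rather than $E^{(1)}$, and apply Proposition~\ref{prop:split} to identify the homology of the bar construction rather than to split a filtration on $\calC^\bullet$.
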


\begin{proof}
We continue using the observation of \cite[Ex.~3.9]{MR3654116} for the spaces $X(n)=\bCGK_d(n)$ and $Y(n)=\CGK_d(n)$, as in the  proof of Theorem \ref{th:CGK}. According to \cite[Ex.~3.9]{MR3654116}, the spectral sequence $E^{(2)}$ of \cite[Th.~1]{MR3654116} has in this case as its page $E_1^{(2)}$ the cobar construction of the cooperad $\{H^\bullet_c(X(n),\mathbb{Q})\}$, and as its page $E_\infty^{(2)}$ the associated graded for a filtration on $\{H^{\bullet+1}_c(Y(n),\mathbb{Q})\}$. 

In our case, $E_1^{(2)}$ is the cobar construction of the cooperad $\{H^\bullet_c(X(n))\}\cong\HyperCom_d^*$. Moreover, since the complement of $Y(n)=\CGK_d(n)$ in $X(n)=\bCGK_d(n)$ is a normal crossing divisor, the stratification is by intersections of its components. As observed by Petersen \cite[Ex.~3.5]{MR3654116}, our spectral sequence becomes the Poincaré dual of the spectral sequence used by Deligne \cite[Sec.~3.2]{MR498551} to define a mixed Hodge structure on the cohomology of a smooth noncompact complex algebraic variety; it converges to the associated graded with respect to the weight filtration on the cohomology of that variety, in our case of $H^\bullet(\CGK_d(n))$. Note that by \cite[Cor.~3.2.13]{MR498551}, the Deligne spectral sequence abuts on the second page. Moreover, according to Proposition~\ref{prop:split}, the mixed Hodge structure of $H^\bullet(\CGK_d(n))$ is split, and hence the spectral sequence in fact converges to $H^\bullet(\CGK_d(n))$, and not merely to the associated graded.  

Since the Poincaré duality is compatible with mixed Hodge structures \cite[Sec.~6.3]{MR2393625}, all the differentials of our spectral sequence (the Poincaré dual of the Deligne spectral sequence) are strictly compatible with the mixed Hodge structures. Thus, we have isomorphisms
 \[
H_\bullet(\Omega(\HyperCom_d^*))\cong H^{\bullet+1}_c(\CGK_d)\cong\mathcal{S}^{2d}H_{\bullet-1}(\CGK_d)\cong\widetilde\Grav_d,     
 \] 
or, dually, 
 \[
H_\bullet(\mathsf{B}(\HyperCom_d))\cong\widetilde\Grav_d^*.     
 \] 
If we equip the cooperad $\widetilde\Grav_d^*$ with the minimal $\mathsf{coOp}_\infty$-structure obtained from the cooperad structure on the bar construction using the homotopy transfer \cite{MR2349311}, we have 
 \[
\Omega_\infty(\widetilde\Grav_d^*)\cong H_\bullet(\bCGK_d),   
 \] 
as required.  
\end{proof}

\begin{remark}
It is only at this point that the difference between the case $d=1$ and the case $d>1$ becomes crucial. Namely, in the case $d=1$ the mixed Hodge structure on $H^k(\CGK_1(n))$ is pure of weight $2k$ and hence there is no room for a nontrivial $\mathsf{Op}_\infty$-structure on the homology of the bar construction of the operad $H_\bullet(\bCGK_1)$, and therefore the operad $H_\bullet(\bCGK_1)$ is Koszul (this is behind the scenes of the argument of Getzler in \cite[Sec.~3.11]{MR1363058}). By contrast, this is no longer the case for $d>1$; one may argue that the $\mathsf{Op}_\infty$-structure on $\widetilde\Grav_d\cong\mathcal{S}^{2d}H_{\bullet-1}(\CGK_d)$ is, in a sense, a ``witness'' of the non-pure mixed Hodge structure on $H^\bullet(\CGK_d)$.  (A similar phenomenon was exhibited by McCrory \cite{MR757958} who described the weight filtration of the link of a normal surface singularity in terms of Massey products on the cohomology.) 
\end{remark}

To conclude this section, let us recall that in \cite{rossi2024chainlevelkoszulduality}, Rossi and Salvatore lifted the Koszul duality between the operads $\Grav$ and $\HyperCom$ to the chain level, and conjectured that the Deligne--Mumford operad is cellular. Both of these questions make perfect sense for any $d$: it would be interesting to determine if the homotopy Koszul duality between $\widetilde{\Grav}_d$ and $\HyperCom_d$ can be lifted to the chain level (where, in fact, the corresponding statement might be more transparent), and whether the Chen--Gibney--Krashen operad $\bCGK_d$ is cellular for any $d$. 

\section{The \texorpdfstring{$S^1$}{S1}-framed operad of little disks}\label{sec:S1framed}

We shall now discuss the operad of $\D_{2d}\rtimes S^1$ of $S^1$-framed little $2d$-disks, so that we can eventually elucidate its relationship with the Chen--Gibney--Krashen operad. The results of the previous sections, paired with the work of Ward \cite{MR3995021}, suggest this kind of relationship, and we shall see that it can be explained in rather direct way. 

We begin by giving a small $\mathbb{R}$-model of the operad $\D_{2d}\rtimes S^1$. Let us adopt the viewpoint of Khoroshkin and Willwacher \cite{khoroshkin2025realmodelsframedlittle} who constructed functorial models of framed little disk operads $\D_m\rtimes G$ that one can define for any subgroup $G\subset SO_m$. For even $m=2d$, this leads to a proof of formality of the operad $\D_{2d}\rtimes SO_{2d}$, which, however, does not immediately imply such formality result for any subgroup $G\subset SO_{2d}$. (The reason for it is that the inclusion of groups $G\hookrightarrow SO_{2d}$ need not be formal.) 

Let us give a short summary of the parts of \cite{khoroshkin2025realmodelsframedlittle} that is relevant to us, translating it into language suitable for our purposes. Let $\mathsf{Graphs}_{m}$ be the Kontsevich's graph operad. The underlying graded vector space of $\mathsf{Graphs}_{m}(n)$ has a basis of directed graphs with numbered white vertices bijectively numbered by $\underline{n}$ and unnumbered black vertices of degree $m$, and with edges of degree $1-m$, where changing the direction of an edge multiplies the graph by $(-1)^m$, where each black vertex is at least trivalent, and where no connected component may consist of black vertices only. The composition is defined by the ``usual'' formulas (insert a graph in a vertex and sum over all ways to reconnect the incoming edges of that vertex), and a differential comes from a suitable version of the operadic twisting construction \cite{MR4621635,MR3348138}. It is proved by Kontsevich \cite{MR1718044} that $\mathsf{Graphs}_m$ is a model of $\D_m$. It follows from \cite{khoroshkin2025realmodelsframedlittle} that the action of a Lie subgroup $G\subset SO_m$ on $\D_m$ can be encoded by a homotopy action of the abelian Lie algebra $\mathfrak{g}_G=\pi^{\mathbb{R}}(G)$ on the differential graded operad $\mathsf{Graphs}_m$. Specifically, one can always construct a model of $\D_m\rtimes G$ of the form
 \[
\mathsf{Graphs}_m\rtimes_{\alpha_m} U(\hat{\mathfrak{g}}_G),    
 \] 
where $\hat{\mathfrak{g}}_G=(\Lie(s^{-1}\overline{H}_\bullet(BG)),\partial)$ is the Lie bar-cobar resolution of $\mathfrak{g}_G$, and where $\alpha_m$ is a concrete Maurer--Cartan element of the dg Lie algebra $\overline{H}_\bullet(BG)\hat{\otimes}\mathsf{GC}_{m}^+$. Here $\mathsf{GC}_{m}^+$ is a certain Lie algebra of graphs (obtained as the one-dimensional extension of the Lie algebra $\mathsf{GC}_{m}$, which is defined as a suitable deformation complex \cite{MR4621635,MR3348138}). Let us apply this to the case where $m=2d$ is even and $G=S^1\cong SO_2$ is embedded in $SO_{2d}$ diagonally. In this case, 
 \[
\hat{\mathfrak{g}}_{S^1}=(\Lie(\Delta_k \colon k\ge 1),\partial)    
 \]
with $|\Delta_k|=2k-1$ and 
 \[
 \partial(\Delta_k)+\sum_{a+b=k}[\Delta_a,\Delta_b]=0.   
 \]
Moreover, in the case of even $m=2d$, the Maurer--Cartan element $\alpha_{2d}$ is much simpler than in general, and the argument along the lines of \cite[Sec.~5.2]{khoroshkin2025realmodelsframedlittle} shows that one obtains a model for $\D_{2d}\rtimes S^1$ given by $\Gerst_{2d}\rtimes U(\hat{\mathfrak{g}}_{S^1})$,    
where $\Gerst_{2d}=H_\bullet(\D_{2d})$ is the $2d$-Gerstenhaber operad, the semi-direct product is taken with respect to the action of $\hat{\mathfrak{g}}_{S^1}$ on $\Gerst_{2d}$ by derivations for which 
 \[
\Delta_k(-\cdot-)=
\begin{cases} 
\quad 0 , \qquad k\ne d,\\
[-,-] , \quad k=d.
\end{cases},
\qquad 
\Delta_k([-,-])=0.
 \]

It turns out that this model can be meaningfully simplified. Let us explain how that is done. 

\begin{definition}
For $d\ge 1$, we define the \emph{$d$-truncated model of the circle} $\hat{\mathfrak{g}}^{(d)}_{S^1}$ to be the differential graded Lie algebra with generators $\Delta_1$, \ldots, $\Delta_d$ of degrees $|\Delta_k|=2k-1$ satisfying 
\begin{gather}
\partial(\Delta_k)=-\sum_{a+b=k}[\Delta_a,\Delta_b],\\
\sum_{a+b=m}[\Delta_a,\Delta_b]=0, \quad d+1\le m\le 2d.
\end{gather}
\end{definition}

\begin{remark}
The above relations can be compactly written as 
 \[
[\partial+\Delta_1+\cdots+\Delta_k,\partial+\Delta_1+\cdots+\Delta_k]=0,    
 \]
where one adopts the usual ``differential trick'' convention $[\partial,a]=\partial(a)$. 
\end{remark}

We are now ready to state and prove an important technical result. 

\begin{proposition}\label{prop:d-model}
The operad $\D_{2d}\rtimes S^1$ has a model 
 \[
\Gerst_{2d}\rtimes U(\hat{\mathfrak{g}}^{(d)}_{S^1}),   
 \]
where the semi-direct product is taken with respect to the action of $\hat{\mathfrak{g}}^{(d)}_{S^1}$ on $\Gerst_{2d}$ by derivations for which 
 \[
\Delta_k(-\cdot-)=
\begin{cases} 
\quad 0 , \qquad k< d,\\
[-,-] , \quad k=d.
\end{cases},
\qquad 
\Delta_k([-,-])=0.
 \]
\end{proposition}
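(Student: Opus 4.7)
The plan is to reduce to the established Khoroshkin--Willwacher model $\Gerst_{2d}\rtimes U(\hat{\mathfrak{g}}_{S^1})$ by producing a quasi-isomorphism of dg operads
\[
\Psi\colon \Gerst_{2d}\rtimes U(\hat{\mathfrak{g}}_{S^1})\to \Gerst_{2d}\rtimes U(\hat{\mathfrak{g}}^{(d)}_{S^1}).
\]
First I would define the natural projection $\phi\colon \hat{\mathfrak{g}}_{S^1}\to \hat{\mathfrak{g}}^{(d)}_{S^1}$ by $\Delta_k\mapsto \Delta_k$ for $k\le d$ and $\Delta_k\mapsto 0$ for $k>d$ and check that it is a surjection of dg Lie algebras. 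The only nontrivial verification is preservation of the differential for $k>d$: one computes $\phi(\partial\Delta_k)=-\sum_{a+b=k,\,1\le a,b\le d}[\Delta_a,\Delta_b]$, which vanishes in $\hat{\mathfrak{g}}^{(d)}_{S^1}$ either by the defining truncation relations (for $d<k\le 2d$) or because the sum is empty (for $k>2d$), while $\partial\phi(\Delta_k)=0$. Since $\Delta_k$ acts trivially on $\Gerst_{2d}$ whenever $k\ne d$, the $\hat{\mathfrak{g}}_{S^1}$-action factors through $\phi$, inducing the morphism $\Psi$.

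The heart of the argument is to show that $\phi$ is a quasi-isomorphism. I would equip both dg Lie algebras with the weight grading $w(\Delta_k)=k$ extended additively to brackets; since $\partial$ and $\phi$ both preserve weight, cohomology splits into finite-dimensional weight-graded pieces. Because $\hat{\mathfrak{g}}_{S^1}$ is a cofibrant resolution of the abelian Lie algebra $\mathfrak{g}_{S^1}$, which is concentrated in weight~$1$, its weight-$w$ cohomology vanishes for $w\ge 2$; it therefore suffices to establish $H_\bullet(\hat{\mathfrak{g}}^{(d)}_{S^1})_w=0$ for $w\ge 2$. I would carry this out by induction on $w$: the defining differentials $\partial\Delta_k=-\sum_{a+b=k}[\Delta_a,\Delta_b]$ for $k\le d$, together with the truncation relations $\sum_{a+b=k}[\Delta_a,\Delta_b]=0$ for $d<k\le 2d$, ensure, after suitable applications of the graded Jacobi identity, that every weight-$w$ cocycle is a coboundary for $w\le 2d$, while for $w>2d$ all Lie monomials in $\Delta_1,\ldots,\Delta_d$ of total weight $w$ already vanish after iterated Jacobi manipulations and truncation.

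Granted that $\phi$ is a quasi-isomorphism of dg Lie algebras, the Poincar\'e--Birkhoff--Witt theorem implies that $U(\phi)\colon U(\hat{\mathfrak{g}}_{S^1})\to U(\hat{\mathfrak{g}}^{(d)}_{S^1})$ is a quasi-isomorphism of dg associative algebras, because the associated graded for the PBW filtration is the symmetric algebra functor applied to $\phi$, which preserves quasi-isomorphisms. Tensoring with the chain complex $\Gerst_{2d}$ and observing that the twist of the differential on both sides is implemented by the same action of $\Delta_d$ (as the Gerstenhaber bracket, with all other $\Delta_k$ acting as zero), one deduces that $\Psi$ is an arity-wise quasi-isomorphism of chain complexes, and hence of dg operads. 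Combined with the Khoroshkin--Willwacher model of $\D_{2d}\rtimes S^1$, this yields the proposition.

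The principal obstacle is the weight-wise acyclicity $H_\bullet(\hat{\mathfrak{g}}^{(d)}_{S^1})_w=0$ for $w\ge 2$. Small-$w$ cases reduce to elementary manipulations (such as the automatic vanishing $[\Delta_1,[\Delta_1,\Delta_1]]=0$ in characteristic zero), but a uniform argument requires careful bookkeeping of the interplay between Jacobi, the differential, and the truncation relations in a well-chosen PBW basis. A more conceptual alternative, which I would pursue if the direct induction proves unwieldy, would be to identify $\hat{\mathfrak{g}}^{(d)}_{S^1}$ with the Lie cobar construction of the truncated sub-cocommutative-coalgebra $\mathbb{R}\{u,u^2,\ldots,u^d\}\subset\overline{H}_\bullet(BS^1)$ and deduce the quasi-isomorphism from a Koszul-style comparison.
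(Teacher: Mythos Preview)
Your reduction is sound and matches the paper exactly: the action of $\hat{\mathfrak{g}}_{S^1}$ on $\Gerst_{2d}$ does factor through $\hat{\mathfrak{g}}^{(d)}_{S^1}$, the semi-direct product construction preserves weak equivalences, and everything comes down to showing that the projection $U(\hat{\mathfrak{g}}_{S^1})\to U(\hat{\mathfrak{g}}^{(d)}_{S^1})$ is a quasi-isomorphism. The PBW step and the verification that $\phi$ is a surjection of dg Lie algebras are also fine.

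The gap is precisely where you locate it. Your direct weight-wise acyclicity sketch is not a proof: the assertion that for $w>2d$ ``all Lie monomials in $\Delta_1,\ldots,\Delta_d$ of total weight $w$ already vanish after iterated Jacobi manipulations and truncation'' is neither proved nor obvious, and carrying it out by hand quickly becomes intractable. More seriously, your proposed alternative is based on a false identification: the Lie cobar construction of any coalgebra is \emph{free} as a graded Lie algebra, whereas $\hat{\mathfrak{g}}^{(d)}_{S^1}$ carries the genuine relations $\sum_{a+b=m}[\Delta_a,\Delta_b]=0$ for $d<m\le 2d$. So $\hat{\mathfrak{g}}^{(d)}_{S^1}$ is not the cobar construction of the truncated sub-coalgebra $\k\{u,\ldots,u^d\}$, and the Koszul-style comparison you sketch does not apply as stated.

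The paper closes this gap with \emph{inhomogeneous} Koszul duality on the associative side. One introduces the algebra $\tilde{A}$ with generators $c_1,\ldots,c_d$ and relations $c_ic_j=c_{i+j}$ for $i+j\le d$ together with $c_ic_j=c_kc_l$ whenever $d<i+j=k+l\le 2d$. A short Gr\"obner basis computation (graded-lex with $c_d>\cdots>c_1$) shows that the normal forms are exactly $c_d^\ell c_k$, one per total weight, so the obvious surjection $\tilde{A}\to\k[t]$, $c_k\mapsto t^k$, is an isomorphism and the presentation is inhomogeneous Koszul in the sense of Positselski. A direct inspection then identifies the Koszul dual dg algebra $\tilde{A}^{!}$ with $U(\hat{\mathfrak{g}}^{(d)}_{S^1})$: the quadratic part of the relations dualises to the truncation relations, while the linear part produces the differential $\partial\Delta_k=-\sum_{a+b=k}[\Delta_a,\Delta_b]$. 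Inhomogeneous Koszul theory now gives the quasi-isomorphism $U(\hat{\mathfrak{g}}^{(d)}_{S^1})\simeq\Omega(\k[t]^\vee)\simeq H_\bullet(S^1)$ directly, bypassing any inductive acyclicity argument. This is the missing ingredient in your outline; once you have it, the rest of your proof goes through unchanged.
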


\begin{proof}
The action of $\hat{\mathfrak{g}}_{S^1}$ on $\Gerst_{2d}$ clearly factors through $\hat{\mathfrak{g}}^{(d)}_{S^1}$, and the semi-direct product construction preserves weak equivalences. Thus, we just need to prove that the obvious homomorphism 
$U(\hat{\mathfrak{g}}_{S^1})\to U(\hat{\mathfrak{g}}^{(d)}_{S^1})$ sending all generators $\Delta_k$ for $k>d$ to zero is a quasi-isomorphism. For that, it is enough to prove that the map $\phi_d\colon U(\hat{\mathfrak{g}}^{(d)}_{S^1})\to H_\bullet(S^1)$ given by 
 \[
\phi(\Delta_k)=
\begin{cases}
[S^1], \quad k=1,\\
\,\,\, 0, \qquad k>1
\end{cases}
 \]
is a quasi-isomorphism. We first note that, since the homology of the circle is the Koszul dual algebra of the polynomial algebra $A=\k[t]$, we have a quasi-isomorphism
 \[
\Omega(A^\vee)\sim H_\bullet(S^1),  
 \]
where $A^\vee$ is the graded dual coalgebra of $A$. Let us now consider the following algebra $\tilde{A}$: it has generators $c_1$, \ldots, $c_d$, and relations 
\begin{gather}
c_ic_j=c_{i+j}, \quad i+j\le d,\\ 
c_ic_j=c_kc_l, \quad d+1\le i+j=k+l\le 2d.
\end{gather}
Note that there is a surjective homomorphism of algebras $\tilde{A}\to A$ sending $c_k$ to $t^k$. We observe that the relations of the algebra $\tilde{A}$ form a Gröbner basis of the ideal they generate (for the graded lexicographic ordering with $c_d>c_{d-1}>\cdots > c_1$). Indeed, these relations are homogeneous for the weight grading assigning to the variable $c_k$ the weight $k$, the normal forms with respect to the leading terms of the relations are easily seen to be $c_d^l c_k$ with $0\le k\le d$, and for each total weight there is just one normal form of that weight. This shows that the above algebra homomorphism $\tilde{A}\to A$ can only be surjective if there are no other elements in the Gröbner basis, for presence of such elements would inevitably create a dependency between the normal forms we listed. This means that $\tilde{A}\cong A$, and that our presentation of the algebra $\tilde{A}$ is inhomogeneous Koszul \cite{MR1250981}. By a direct inspection, the Koszul dual algebra $\tilde{A}^!$ is precisely the differential graded algebra $\mathfrak{m}_d(S^1)$. Because of the inhomogeneous Koszul property, the algebra $\tilde{A}^!$ is quasi-isomorphic to $\Omega(A^\vee)$ which in turn is quasi-isomorphic to $\sim H_\bullet(S^1)$, and it is clear that the map $\phi_d$ implements an explicit quasi-isomorphism, as required. 
\end{proof}

\begin{remark}
Our argument shows that for $d>1$ it is not possible to get a model for $D_{2d}\rtimes S^1$ by merely taking the homology, since the fundamental class $[S^1]$ would act trivially on $\Gerst_{2d}(2)$, and hence on $\Gerst_{2d}$, by degree reasons: the binary operations of $\Gerst_{2d}$ have degrees $0$ and $2d-1$. Let us give a useful elementary hint for how the higher homotopies of the circle action enter the story. Let us consider the Hopf-type fibration $S^{2d-1}\to \mathbb{C}P^{d-1}$ with the fiber $S^1$. The Serre spectral sequence of this fibration has
 \[
E_2^{p,q}=H^p(\mathbb{C}P^{d-1})\otimes H^{q}(S^1)=\k[\xi,c]/(c^p)    
 \]
with $|\xi|=-1$, $|c|=-2$. It converges to $H^\bullet(S^{2d-1})$, and hence, it must have $d_2(\xi)=c$. The action of the circle corresponds to a coaction of $H^\bullet(S^1)$, and its higher homotopies are obtained by homotopy transfer formulas \cite{LV}, which, once one chooses a contracting homotopy $h$ for the above differential $d_2$, immediately shows that the $d$-th higher operation sends $1$ to the linear dual of the fundamental cycle $[S^{2d-1}]$. We find this elementary homotopy transfer exercise very illuminating.
\end{remark}

\section{The Givental action}\label{sec:Givental}

\subsection{An infinitesimal action of \texorpdfstring{$\Der(\calO)[[z]]$}{DerOz} on maps from \texorpdfstring{$\HyperCom_d$}{HCd} to \texorpdfstring{$\calO$}{O}}

Let $\calO$ be an arbitrary operad. Suppose that $f\in\Hom(\HyperCom_d,\calO)$ is a morphism of operads. Clearly, $f$ makes $\calO$ a $\HyperCom_d$-bimodule, so we may talk about $f$-derivations of $\HyperCom_d$ with values in $\calO$: we say that a map  $D\colon\HyperCom_d\to\calO$ is an \emph{$f$-derivation} if 
 \[
D(u\circ_\star v)=D(u)\circ_\star f(v)+ f(u)\circ_\star D(v)    
 \]
for all $u\in\HyperCom_d(I\sqcup\{\star\})$ and all $v\in \HyperCom_d(J)$. 
We denote by $\Der_f(\HyperCom_d,\calO)$ the vector space of all $f$-derivations of $\HyperCom_d$ with values in $\calO$. Geometrically, that vector space is the tangent space to the morphism $f$ in the following precise sense: we have $D\in \Der_f(\HyperCom_d,\calO)$ if and only if
 \[
f+\epsilon D\in \Hom_{\k[\epsilon]/(\epsilon^2)}(\HyperCom_d[\epsilon]/(\epsilon^2),\calO[\epsilon]/(\epsilon^2)) .
 \]
Here and below we shall view the set $\Hom_{\mathrm{Op}}(\HyperCom_d,\calO)$ of all morphisms of operads from $\HyperCom_d$ to $\calO$ simply as an algebraic variety; though in reality it is infinite-dimensional (we need to define images of infinitely many generators and ensure that they satisfy infinitely many identities, which are polynomial equations on coordinates of the images), and one should be talking about pro-varieties, the rather basic geometric concepts and results that we require in this paper can be adapted quite trivially to that level of generality.

\begin{proposition}\label{prop:GiventalDer}
For any $a\in \Der(\calO)[[z]]$, there is a unique vector field $Y(a)$ on the variety $\Hom_{\mathrm{Op}}(\HyperCom_d,\calO)$ satisfying 
\begin{equation}\label{eq:GiventalDer}
[(Y(Dz^p))(f)](\mu_{n}^{(k)})= 
\sum_{\substack{p_0+\cdots+p_q+q=p\\S_0\sqcup \cdots\sqcup S_{q}=\underline{n}}}
f(\mu^{(k+p_{q})}_{S_{q}\sqcup\{\star\}})\circ_\star f(\mu^{(p_{q-1})}_{S_{q-1}\sqcup\{\star\}})\circ_\star\cdots\circ_\star f(\mu^{(p_1)}_{S_1\sqcup\{\star\}}) \circ_\star D(f(\mu^{(p_0)}_{S_0})).
\end{equation}
The assignment $a\mapsto Y(a)$ is a Lie algebra homomorphism from the Lie algebra $\Der(\calO)[[z]]$ to the Lie algebra of vector fields on the variety $\Hom_{\mathrm{Op}}(\HyperCom_d,\calO)$. In other words, for all $a,a'\in\Der(\calO)[[z]]$, we have
 \[
Y([a,a'])=[Y(a),Y(a')]. 
 \] 
\end{proposition}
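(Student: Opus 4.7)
A vector field at $f\in\Hom_{\mathrm{Op}}(\HyperCom_d,\calO)$ is the same as an $f$-derivation $\HyperCom_d\to\calO$. By Theorem \ref{th:CGK}, the operad $\HyperCom_d$ is generated by the operations $\mu_n^{(k)}$, $n\ge 2$, $k\ge 0$, so any $f$-derivation is determined by its values on these generators via the Leibniz rule. Uniqueness of $Y(Dz^p)(f)$ is therefore automatic, and the content of the first statement is existence, i.e.\ compatibility of the values prescribed by \eqref{eq:GiventalDer} with the defining relations of $\HyperCom_d$. Once this is established, extending $a\mapsto Y(a)$ by linearity and $z$-adic continuity to all of $\Der(\calO)[[z]]$ is formal.

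My plan for existence is to package the generators into a generating series. I would introduce a formal variable $t$ of homological degree $2$ and set $M_I(t):=\sum_{k\ge 0}t^k f(\mu_I^{(k)})$. In these terms, the relations of Theorem \ref{th:CGK} assert that for every pair $\{i,j\}\subseteq\underline{n}$ the expression
\[
\Phi_{n,\{i,j\}}(t):=\sum_{q\ge 1}t^{q-1}\sum_{\substack{S_1\sqcup\cdots\sqcup S_q=\underline{n}\\ \{i,j\}\subseteq S_1}}M_{S_q\sqcup\{\star\}}(t)\circ_\star\cdots\circ_\star M_{S_1}(t)
\]
is a polynomial in $t$ of degree less than $d$. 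I would then apply the Leibniz extension of the prospective $f$-derivation $Y(Dz^p)(f)$ to $\Phi_{n,\{i,j\}}(t)$ and reorganize the result: the underlying derivation $D$ of $\calO$ ends up applied to a single innermost factor $f(\mu^{(p_0)}_{S_0})$, while the remaining factors combine into a nested composition of $M_I(t)$'s with the upper index of the outermost factor boosted. The resulting expression has the very same shape as $\Phi_{n,\{i,j\}}(t)$, so the bound on the degree in $t$ is preserved, the coefficient of each $t^m$ with $m\ge d$ remains zero, and the relations are annihilated.

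For the Lie homomorphism property, I note that both $Y([a,a'])(f)$ and $[Y(a),Y(a')](f)$ are $f$-derivations and hence agreement on the generators $\mu_n^{(k)}$ suffices. For $a=Dz^p$ and $a'=D'z^{p'}$, I would compute $[Y(a),Y(a')](f)(\mu_n^{(k)})$ by applying $Y(a')$ to the right-hand side of \eqref{eq:GiventalDer}, regarding that side as a polynomial in the coordinates $f(\mu^{(\bullet)}_\bullet)$ of the point $f$. Antisymmetrizing in $(D,p)\leftrightarrow(D',p')$ kills all contributions in which $D$ and $D'$ act on different factors, so only the ``diagonal'' terms survive, and they assemble precisely into the formula prescribing $Y([D,D']z^{p+p'})(f)(\mu_n^{(k)})$.

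The hard part will be the combinatorial verification in the second paragraph: matching the sums over ordered set partitions $S_0\sqcup\cdots\sqcup S_q$ of $\underline{n}$ in \eqref{eq:GiventalDer} with those produced by the Leibniz rule applied to a nested composition $M_{S_q\sqcup\{\star\}}(t)\circ_\star\cdots\circ_\star M_{S_1}(t)$. A clean way to organize this is to iterate a single ``infinitesimal'' step, inserting $D$ at the innermost spot and peeling off one additional outer factor at a time, so that the identity to verify becomes a reindexing lemma between two orders of summation indexed by ordered set partitions; the bookkeeping, while lengthy, should be essentially formal once the generating-series reformulation is in place.
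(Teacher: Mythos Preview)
Your overall scaffolding is right and matches the paper: a vector field at $f$ is an $f$-derivation, such a derivation is determined on the generators $\mu_n^{(k)}$, uniqueness is immediate, and the content is that the prescription \eqref{eq:GiventalDer} kills the relations of Theorem~\ref{th:CGK}. Your argument for the Lie homomorphism property (agreement on generators, antisymmetrization kills the cross terms) is also fine; the paper simply says ``by direct inspection''.

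The gap is in the second paragraph. Your claim that, after applying Leibniz to $\Phi_{n,\{i,j\}}(t)$ and expanding each $\delta(M_{S_u})$ via \eqref{eq:GiventalDer}, ``the resulting expression has the very same shape as $\Phi_{n,\{i,j\}}(t)$'' is not true on the nose. When $\delta$ hits a factor $M_{S_u}$ with $u>1$, the derivation $D$ ends up in the middle of the chain with the original factors $M_{S_{u-1}},\dots,M_{S_1}$ (the one containing $\{i,j\}$ among them) still sitting \emph{below} it; and when $\delta$ hits $M_{S_1}$, the distinguished pair $\{i,j\}$ may land in any of the new blocks $T_0,\dots,T_r$, not necessarily the deepest one carrying $D$. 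Neither of these is a $\Phi$-shaped expression, so there is no direct ``degree in $t$'' bound to read off.

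What actually makes the argument work---and what your ``reindexing lemma'' would have to encode---is a case analysis, which is exactly what the paper does. It first separates out the $q=0$ part of \eqref{eq:GiventalDer} (the term $D(f(\mu_{S_u}^{(p_u+p)}))$), observes that summing these over $u$ nearly produces $D$ applied to the relation with parameter $m+p$, and isolates the residual. For the decomposable ($q\ge1$) part it then tracks where $i$ and $j$ land: if they lie in different blocks, the terms group into an instance of a relation with distinguished pair $\{j,\star\}$ or $\{i,\star\}$; if they lie together in a block that still has a $\star$-slot, one sees a relation of smaller arity with pair $\{i,j\}$; and if they lie together in the deepest block, those terms cancel exactly against the residual from the first step. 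Your generating-series packaging is a pleasant notational device, but it does not bypass this case split; the ``essentially formal bookkeeping'' you anticipate is precisely this analysis of the positions of $i$ and $j$, and you should carry it out explicitly.
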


\begin{proof}
Let us denote by $\calT_d$ the free operad generated by all operations $\mu_n^{(p)}$ of which the operad $\HyperCom_d$ is a quotient (by the relations described in Theorem \ref{th:CGK}). The map of operads $f\colon\HyperCom_d\to\calO$ induces a map of operads $\calT_d\to\calO$, which we shall denote by the same letter $f$. Since the operad $\calT_d$ is free, any map of its species of generators to $\calO$ extends to a unique $f$-derivation. Thus, to establish that the vector field $Y(Dz^p)$ is well defined, it is enough to show that the $f$-derivation $Dz^p.f\in\Der_f(\calT_d,\calO)$ vanishes on all relations of $\HyperCom_d$. It is enough to show that it vanishes on all generators of the ideal of relations. Let us take one of those generators, say,
 \[
\sum_{q\ge 1}\sum_{\substack{ p_1+\cdots+p_q+q-1=m\\S_1\sqcup \cdots\sqcup S_q=\underline{n},\\S_1,\ldots,S_q\ne\varnothing\colon\{i,j\}\subseteq S_1
}}
\mu^{(p_q)}_{S_q\sqcup\{\star\}}\circ_\star\cdots\circ_\star\mu^{(p_1)}_{S_1}
 \]
for some $n$, some chosen elements $i\ne j\in\underline{n}$, and some $m\ge d$, and apply our $f$-derivation to it, amounting to replacing it by   
 \[
\sum_{q\ge 1}\sum_{\substack{p_1+\cdots+p_q+q-1=m\\S_1\sqcup \cdots\sqcup S_q=\underline{n},\\S_1,\ldots,S_q\ne\varnothing\colon\{i,j\}\subseteq S_1
}}\sum_{u=1}^q 
f(\mu^{(p_q)}_{S_q\sqcup\{\star\}})\circ_\star \cdots \circ_\star(Dz^p.f)(\mu^{(p_u)}_{S_u\sqcup\{\star\}})\circ_\star \cdots \circ_\star f(\mu^{(p_2)}_{S_2\sqcup\{\star\}})\circ_\star f(\mu^{(p_1)}_{S_1})
 \]
and expanding $(Dz^p.f)(\mu^{(p_u)}_{S_u\sqcup\{\star\}})$ according to Formula \eqref{eq:GiventalDer}. We shall now analyze this expansion in detail, and show that it simplifies to zero. To organize the calculations, we split contributions of Formula \eqref{eq:GiventalDer} into two parts: the sum where we only use the $q=0$ term of Formula \eqref{eq:GiventalDer} everywhere, and the sum of all other terms.
The first part is 
 \[
\sum_{q\ge 1}\sum_{\substack{p_1+\cdots+p_q+q-1=m\\S_1\sqcup \cdots\sqcup S_q=\underline{n},\\S_1,\ldots,S_q\ne\varnothing\colon\{i,j\}\subseteq S_1
}}\sum_{u=1}^q 
f(\mu^{(p_q)}_{S_q\sqcup\{\star\}})\circ_\star \cdots \circ_\star D(f(\mu^{(p_u+p)}_{S_u\sqcup\{\star\}}))\circ_\star \cdots \circ_\star f(\mu^{(p_2)}_{S_2\sqcup\{\star\}})\circ_\star f(\mu^{(p_1)}_{S_1}),    
 \]
and if we subtract from it the action of the derivation $D$ on the relation corresponding to the same $n$, same chosen elements $i\ne j\in\underline{n}$, and $m+p$, we obtain  
 \[
-\sum_{q\ge 1}\sum_{\substack{p_1+\cdots+p_q+q-1=m\\S_1\sqcup \cdots\sqcup S_q=\underline{n},\\S_1,\ldots,S_q\ne\varnothing\colon\{i,j\}\subseteq S_1\\ 
1\le u\le q \colon p_u<p 
}}
f(\mu^{(p_q)}_{S_q\sqcup\{\star\}})\circ_\star\cdots\circ_\star D(f(\mu^{(p_u)}_{S_u\sqcup\{\star\}}))\circ_\star\cdots\circ_\star f(\mu^{(p_1)}_{S_1}).
 \]
To analyze the second part
 \[
\sum_{\substack{q\ge 1\\ p_1+\cdots+p_q+q-1=m\\S_1\sqcup \cdots\sqcup S_q=\underline{n},\\S_1,\ldots,S_q\ne\varnothing\colon\{i,j\}\subseteq S_1,\\  u=1,\ldots,q
}} 
f(\mu^{(p_q)}_{S_q\sqcup\{\star\}})\circ_\star \cdots \circ_\star 
\sum_{\substack{t_0+\cdots+t_{q_u}+q_u=p\\T_0\sqcup \cdots\sqcup T_{q_u}\subseteq \underline{n}\\ q_u>0}}
f(\mu^{(p_u+t_{q_u})}_{T_{q_u}\sqcup\{\star\}})\circ_\star \cdots\circ_\star  D(f(\mu^{(t_0)}_{T_0}))
\circ_\star \cdots \circ_\star f(\mu^{(p_1)}_{S_1}),    
 \] 
we shall split it further, according to where the elements $i$ and $j$ appear. Concretely, they may be contained in the same block ($S_i$ or $T_j$), or not, in which case $i$ may appear ``deeper'' than~$j$, or it may happen the other way round. The terms where $i$ appears deeper than $j$ can be organized in groups that vanish: such group is precisely the sum of terms where the operations substituted into the slot $\star$ of the operation that has $j$ as an argument are exactly the same; forgetting these substituted operations, we recover one of the relations of $\HyperCom_d$ with the chosen arguments $j$ and $\star$.  The same applies to the terms where $j$ appears deeper than~$i$. It remains to consider the terms where $i$ and $j$ are in the same block. In this case, it is possible that the operation that has $i$ and $j$ as its arguments has a slot labelled $\star$, in which case the sum of all operations like that where the operations substituted into the slot $\star$ are exactly the same is a consequence of a relations of smaller arity with the chosen arguments $i$ and $j$. Finally, we are left with the sum of terms where $i$ and $j$ are in the same block which is the deepest block that there is. These terms are precisely those appearing with the opposite sign after the simplification of the very first part of the sum above (for that, it is useful to note that since the second part only has the ``decomposable'' terms of Formula \eqref{eq:GiventalDer}, this forces the inequality $p_u<p$ where the derivation $D$ is applied). We conclude that all terms cancel, as claimed. 
The claim about the Lie algebra homomorphism follows by a direct inspection from Formula \eqref{eq:GiventalDer} and the $f$-derivation property.  
\end{proof}

The result we have just proved has the following crucial corollary. 

\begin{corollary}\label{cor:GiventalAdjoint}
There is a unique Lie algebra homomorphism from $\calO(\underline{1})[[z]]$ to the Lie algebra of vector fields on the variety $\Hom_{\mathrm{Op}}(\HyperCom_d,\calO)$ sending $a\in \calO(\underline{1})[[z]]$ to a vector field $X(a)$ satisfying
\begin{equation}\label{eq:GiventalUnary}
[(X(rz^p))](f) (\mu_{n}^{(k)})= 
\sum_{\substack{p_0+\cdots+p_q+q=p\\S_0\sqcup \cdots\sqcup S_{q}\subseteq \underline{n}}}
f(\mu^{(k+p_{q})}_{S_{q}\sqcup\{\star\}})\circ_\star f(\mu^{(p_{q-1})}_{S_{q-1}\sqcup\{\star\}})\circ_\star\cdots\circ_\star f(\mu^{(p_1)}_{S_1\sqcup\{\star\}}) \circ_\star [r,f(\mu^{(p_0)}_{S_0})].
\end{equation}
\end{corollary}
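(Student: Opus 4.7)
The plan is to realize $X$ as the composition of $Y$ with the operadic adjoint representation. For any $r \in \calO(\underline{1})$, the graded operadic commutator $\mathrm{ad}_r(x) := [r,x]$ (meaning $r\circ_1 x - (-1)^{|r||x|}\sum_i x\circ_i r$ for $x\in\calO(n)$) defines a derivation $\mathrm{ad}_r \in \Der(\calO)$; the derivation axiom with respect to each partial composition follows from the sequential and parallel associativity of the operad by routine sign bookkeeping. The resulting map $\mathrm{ad}\colon \calO(\underline{1}) \to \Der(\calO)$ is a homomorphism of graded Lie algebras, which is the operadic generalization of the classical statement that inner derivations of an algebra form a Lie subalgebra of all derivations, and is a direct consequence of the graded Jacobi identity for the operadic bracket. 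Extending $\k[[z]]$-linearly yields a Lie algebra homomorphism $\mathrm{ad}\colon \calO(\underline{1})[[z]] \to \Der(\calO)[[z]]$.

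Set $X := Y \circ \mathrm{ad}$, where $Y$ is the Lie algebra homomorphism of Proposition~\ref{prop:GiventalDer}; as the composition of two Lie algebra homomorphisms it is itself a Lie algebra homomorphism from $\calO(\underline{1})[[z]]$ to the Lie algebra of vector fields on $\Hom_{\mathrm{Op}}(\HyperCom_d,\calO)$. To check that $X(rz^p)$ satisfies formula~\eqref{eq:GiventalUnary}, substitute $D = \mathrm{ad}_r$ into \eqref{eq:GiventalDer}: the innermost piece $D(f(\mu^{(p_0)}_{S_0}))$ becomes $[r, f(\mu^{(p_0)}_{S_0})]$, which is precisely the rightmost factor in~\eqref{eq:GiventalUnary}. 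For uniqueness, observe that any morphism $f \in \Hom_{\mathrm{Op}}(\HyperCom_d, \calO)$ is determined by its values on the generators $\mu_n^{(k)}$ of $\HyperCom_d$ described in Theorem~\ref{th:CGK}, so a vector field on this variety is determined by its derivation action on these coordinates; since \eqref{eq:GiventalUnary} prescribes $X(rz^p)(f)(\mu_n^{(k)})$ at every $f$, the homomorphism $X$ is uniquely determined on elements of the form $rz^p$, and $\k[[z]]$-linearity together with $z$-adic continuity extends this uniqueness to all of $\calO(\underline{1})[[z]]$.

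The only technical point that demands genuine care is the verification that $\mathrm{ad}$ lands in $\Der(\calO)$, i.e., that $\mathrm{ad}_r$ is a derivation with respect to every partial composition $\circ_i$. In the ungraded setting this is classical; in the homologically graded context used in this paper it amounts to carefully tracking Koszul signs when commuting $r$ past operations of various degrees in the verification of the identity $\mathrm{ad}_r(x\circ_i y) = \mathrm{ad}_r(x)\circ_i y + (-1)^{|r||x|} x\circ_i \mathrm{ad}_r(y)$. Once this and the companion identity $\mathrm{ad}_{[r,s]} = [\mathrm{ad}_r,\mathrm{ad}_s]$ are in hand, the remainder of the proof is formal.
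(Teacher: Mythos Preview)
Your proof is correct and follows exactly the same approach as the paper: define $X(rz^p):=Y([r,-]z^p)$ using the inner derivation $[r,-]\in\Der(\calO)$ and invoke Proposition~\ref{prop:GiventalDer}. You spell out in more detail the verification that $\mathrm{ad}$ is a Lie algebra map landing in $\Der(\calO)$ and the uniqueness argument, but the underlying idea is identical to the paper's one-line proof.
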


\begin{proof}
This immediately follows from Proposition \ref{prop:GiventalDer} and the fact that for any $r\in\calO(\underline{1})$, the endomorphism $D=[r,-]$ of $\calO$ is a derivation (such derivations are called inner operadic derivations), and this assignment defines a Lie algebra morphism from $\calO(\underline{1})$ to $\Der(\calO)$, so we may put $X(rz^p):=Y([r,-]z^p)$. 
\end{proof}

For a Lie algebra concentrated in degree zero, a morphism into the Lie algebra of vector fields on a manifold may be viewed as an infinitesimal action. One way to ensure that one can integrate it to a group action is to require that the Lie algebra is pronilpotent, in which case the exponential map may be used. In our case, we may consider elements of degree zero, and avoiding constant terms, getting a pronilpotent Lie algebra
 \[
\mathfrak{g}_{\calO}:=\left\{\sum_{i\ge 1} r_iz^i\colon |r_i|=2i\right \}\subset \calO(\underline{1})[[z]].   
 \]
and the corresponding prounipotent group $G_{\calO}=\exp(\mathfrak{g}_{\calO})$. The following result is a version of the celebrated Givental action on cohomological field theories.

\begin{corollary}\label{cor:GiventalAction}
The group $G_{\calO}$ acts on the variety $\Hom_{\mathrm{Op}}(\HyperCom_d,\calO)$.
\end{corollary}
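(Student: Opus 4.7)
The plan is to integrate the infinitesimal Givental action from Corollary~\ref{cor:GiventalAdjoint} to a left action of the prounipotent group $G_{\calO}$. First I would verify that $\mathfrak{g}_{\calO}$ is pronilpotent with respect to the descending filtration $\mathfrak{g}_{\calO}^{\ge n} := \mathfrak{g}_{\calO} \cap z^n \calO(\underline{1})[[z]]$, which manifestly satisfies $[\mathfrak{g}_{\calO}^{\ge m}, \mathfrak{g}_{\calO}^{\ge n}] \subseteq \mathfrak{g}_{\calO}^{\ge m+n}$ and exhibits $\mathfrak{g}_{\calO}$ as the projective limit of the nilpotent quotients $\mathfrak{g}_{\calO}/\mathfrak{g}_{\calO}^{\ge n}$. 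Consequently, the Baker--Campbell--Hausdorff series converges on $\mathfrak{g}_{\calO}\times\mathfrak{g}_{\calO}$, so $G_{\calO} = \exp(\mathfrak{g}_{\calO})$ acquires, in the standard way, the structure of a prounipotent algebraic group whose Lie algebra is $\mathfrak{g}_{\calO}$.

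Next I would define the action by setting
\[
\exp(a)\cdot f := \sum_{m\ge 0}\frac{1}{m!}\,X(a)^m(f)
\]
for $a\in\mathfrak{g}_{\calO}$ and $f\in\Hom_{\mathrm{Op}}(\HyperCom_d,\calO)$, interpreted in a suitable completion of the (pro-algebraic) coordinate ring of $\Hom_{\mathrm{Op}}(\HyperCom_d,\calO)$. Because each vector field $X(a)$ is by construction tangent to that variety, the flow preserves it, so $\exp(a)\cdot f$ is again an operad morphism. The group-action identities $\exp(X(a))\cdot(\exp(X(b))\cdot f) = \exp(X(\mathrm{BCH}(a,b)))\cdot f$ and $\exp(0)\cdot f = f$ then follow formally from the Lie algebra morphism property $[X(a),X(b)] = X([a,b])$ of Corollary~\ref{cor:GiventalAdjoint}, combined with the usual prounipotent BCH formalism.

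The hard part of the plan will be the precise justification of convergence of the exponential series. I would address it by inspecting formula~\eqref{eq:GiventalUnary}: for $a = rz^p\in\mathfrak{g}_{\calO}^{\ge p}$, each summand on the right-hand side either increases the number of operadic composition vertices of the resulting expression (a quantity bounded above by $n-1$ for an output of arity $n$, thanks to the stability inequality for operadic trees made of operations of arity $\ge 2$) or shifts the internal index $k$ upward by $p\ge 1$ via the translational term $[r,f(\mu_n^{(k+p)})]$. Combining the composition-vertex count and the $z$-order of $a$ into a single complexity grading on the coordinate ring of $\Hom_{\mathrm{Op}}(\HyperCom_d,\calO)$, one finds that $X(a)$ is topologically nilpotent for the induced filtration, so the series $\sum\frac{1}{m!}X(a)^m$ converges in the pro-completion and defines an endomorphism of $\Hom_{\mathrm{Op}}(\HyperCom_d,\calO)$. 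Once this convergence is in place, the remainder of the argument is a routine application of the general correspondence between pronilpotent Lie algebra actions and prounipotent group actions on pro-algebraic varieties.
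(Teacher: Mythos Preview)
Your proposal is correct and follows essentially the same approach as the paper. The paper's proof is a two-line appeal to the standard fact that an infinitesimal action of a pronilpotent Lie algebra exponentiates to a prounipotent group action, invoking Corollary~\ref{cor:GiventalAdjoint}; you spell out the pronilpotency of $\mathfrak{g}_{\calO}$ via the $z$-adic filtration, the BCH formalism, and a convergence argument, but the underlying idea is identical.
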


\begin{proof}
Thanks to Corollary \ref{cor:GiventalAdjoint}, we have an infinitesimal action of a pronilpotent Lie algebra $\mathfrak{g}_{\calO}$ on that variety, which we may exponentiate to an action of the corresponding prounipotent group.
\end{proof}

\subsection{The Givental action and the $\psi$-classes} \label{sec:PsiClasses}

In \cite[Sec.~5]{lindström2025loggeometricmodelslittle}, Lindström explains that the space $\bCGK_d(n)$ has $n+1$ line bundles $\calL_0,\dots,\calL_n$ such that the pull-back of the normal bundle of the stratum $D_J$ (obtained by the operadic composition map $\circ_\star\colon \CGK_d(I\sqcup\{\star\})\times \CGK_d(J) \to \CGK_d(n)$) is isomorphic to $p_1^*\calL_\star \otimes p_2^*\calL_0$, where $p_1$ and $p_2$ are the projections to the first and the second factors in $ \bCGK_d(I\sqcup\{\star\})\times \bCGK_d(J)$, see~\cite[Lemma 5.4]{lindström2025loggeometricmodelslittle}.

One can use the Fulton--MacPherson construction applied to $\mathbb{A}^d$ to compute on $\bCGK_{d}(n)$ the first Chern classes of the dual line bundles $\calL_i^*$, $i=0,\dots,n$,  as it is done in~\cite[Sec.~4.1]{lindström2025loggeometricmodelslittle}. Alternatively, we refer the reader to~\cite[Sec.~9]{nesterov2025hilbertschemespointsfultonmacpherson}, where the computation of the first Chern classes of the $d$-dimensional bundles can be adapted to our needs using the diagonal embedding $S^1=U(1)\to U(d)$ of the corresponding structure groups; the restriction of those formulas to the relevant stratum of the Fulton--MacPherson compactification is completely standard. In our notation, we have:
 \[
   \psi_0:= c_1(\calL_0^*) = -x_{\underline{n}}, \qquad
   \psi_i:= c_1(\calL_i^*)  = x_{\underline{n}} + \sum_{i\in  I\subsetneq\underline{n}}x_I,     
 \]
which implies that $\mu_n^{(m)} = (-1)^m \psi_0^m \mu_n^{(0)}$ and 
 \[
(\psi_0 + \psi_i)\mu_{n}^{(m)} = \sum_{\substack{I\sqcup J = \underline{n}\\ i\in J, I\not=\emptyset, |J|\geq 2}}\mu_{I\sqcup \star}^{(m)} \circ_{\star} \mu_J^{(0)}, \quad m\geq 0. 
 \]
These formulas allow us to revisit the formulas for the Givental action from Corollary \ref{cor:GiventalAdjoint}. Namely, by a direct computation one may establish a formula that is more recognizable to the experts in Gromov--Witten theory and is directly aligned with the usual formula in $d=1$ case, see, e.~g., \cite[Sec. 4.3]{MR4580527}. That said, this formula is less general than ~\eqref{eq:GiventalDer} and requires more notation than~\eqref{eq:GiventalUnary}, so we record the end result and omit the explicit computation leading to that result.
\begin{proposition} The vector field $X(a)$ given by~\eqref{eq:GiventalUnary} can equivalently be rewritten, for $a=rz^p$, as 
   \begin{equation*}
      X(rz^p)(f)(\mu_{n}^{(k)})=   r \circ_{1} f(\mu_{n}^{(k+p)}) -\sum_{i=1}^n f(\psi_i^p\mu_{n}^{(k)}) \circ_i r + 
      \sum_{\substack{s+t=p-1\\I\sqcup J=\underline{n}}}
      f(\psi_{\star}^s\mu^{(k)}_{I\sqcup\{\star\}})\circ_\star \big( r \circ_1  f(\mu^{(t)}_{J})\big).
   \end{equation*}
\end{proposition}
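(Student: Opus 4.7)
The plan is to open the commutator appearing in Corollary~\ref{cor:GiventalAdjoint},
\[
[r,f(\mu^{(p_0)}_{S_0})] = r\circ_1 f(\mu^{(p_0)}_{S_0}) - \sum_{i\in S_0} f(\mu^{(p_0)}_{S_0})\circ_i r,
\]
and split the resulting sum into the ``$r\circ_1$'' contribution (Part~A) and the ``$-\sum_i$'' contribution (Part~B). In Part~A, the unique summand with $q=0$ produces $r\circ_1 f(\mu_n^{(k+p)})$, which is precisely the first term of the target formula. For $q\ge 1$ in Part~A, I would reindex by $(J,t):=(S_0,p_0)$ and $(I,s):=(\underline{n}\setminus J,\,p-1-t)$ and factor the expression as a chain of length $q$ in $f(\HyperCom_d(I\sqcup\{\star\}))$ composed at $\star$ with $r\circ_1 f(\mu^{(t)}_J)$. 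Matching with the third term of the target then reduces to the cohomological identity
\[
\psi_\star^{\,s}\,\mu^{(k)}_{I\sqcup\{\star\}}=\sum_{q\ge 1}\sum_{\substack{S_1\sqcup\cdots\sqcup S_q=I\\ p_1+\cdots+p_q+q-1=s}}\mu^{(k+p_q)}_{S_q\sqcup\{\star\}}\circ_\star\cdots\circ_\star\mu^{(p_1)}_{S_1\sqcup\{\star\}}
\]
in $\HyperCom_d(I\sqcup\{\star\})$. In Part~B, after fixing $i\in\underline{n}$ and pulling $\circ_i r$ to the right, one needs the analogous identity
\[
\psi_i^{\,p}\,\mu^{(k)}_n=\sum_{q\ge 0}\sum_{\substack{S_0\sqcup\cdots\sqcup S_q=\underline{n},\,i\in S_0\\ p_0+\cdots+p_q+q=p}}\mu^{(k+p_q)}_{S_q\sqcup\{\star\}}\circ_\star\cdots\circ_\star\mu^{(p_1)}_{S_1\sqcup\{\star\}}\circ_\star\mu^{(p_0)}_{S_0}
\]
in $\HyperCom_d(\underline{n})$, with the convention that for $q=0$ the chain reduces to the single factor $\mu_n^{(k+p)}$.

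Both identities I would prove by induction on $s$ (respectively $p$), with trivial base case. The inductive step rests on the two relations recorded at the beginning of Section~\ref{sec:PsiClasses}, namely $\mu_n^{(m+1)}=-\psi_0\,\mu_n^{(m)}$ and the expansion of $(\psi_0+\psi_j)\,\mu_n^{(m)}$, together with the elementary pullback property $\psi_i(\mu\circ_\star\nu)=\mu\circ_\star(\psi_i\nu)$ whenever $i$ is an input of $\nu$. This reduces multiplication by $\psi_\star$ or $\psi_i$ on a chain to its action on the innermost $\mu$-factor, which by the base relations produces exactly two types of new chains: those with the same number of levels but with the deepest exponent incremented by one, and those with one additional level inserted at the bottom by further splitting the innermost block. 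A direct enumeration then shows that these two types jointly exhaust all chains appearing on the right-hand side for the next value of the exponent, each produced exactly once.

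The main obstacle is the combinatorial bookkeeping in this induction: one must carefully track how an ordered partition of the input set transforms under each of the two moves above, and handle the edge cases in which some block has size one (where $\mu^{(0)}_{\{i\}}$ is interpreted as the operad identity). Once the two identities are in hand, the target formula is obtained by direct substitution and collection of the three groups of terms.
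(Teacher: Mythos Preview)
Your proposal is correct and fills in precisely what the paper omits: the paper explicitly states that it ``record[s] the end result and omit[s] the explicit computation leading to that result'', so there is no proof in the paper to compare against. Your outline---expanding the commutator, separating the $r\circ_1(\cdots)$ and $(\cdots)\circ_i r$ contributions, and identifying each with a $\psi$-class power via the two chain identities---is exactly the direct computation the authors have in mind, and your inductive argument for those identities (each chain at level $p$ arises uniquely either by incrementing the deepest exponent or by splitting the deepest block) is sound.

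Two small points worth tightening. First, the pullback property $\psi_i(\mu\circ_\star\nu)=\mu\circ_\star(\psi_i\nu)$ that you invoke is not proved in the paper either; it follows from the line-bundle description you cite from \cite{lindström2025loggeometricmodelslittle} (the restriction of $\calL_i$ to the boundary divisor is pulled back from the factor carrying the $i$-th marked point), and you should say so. Second, your remark about edge cases is apt but slightly misdirected: in the source formula \eqref{eq:GiventalUnary} the case $|S_0|=1$ contributes $[r,\mathrm{id}]=0$, so it is harmless there, but after you split the commutator the two halves no longer individually vanish on such terms, and you must check that your $\psi_\star$-identity and the $|J|=1$ terms of the target's third sum absorb these consistently. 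This is routine once one fixes the convention $\mu^{(0)}_{\{i\}}=\mathrm{id}$ and notes that a size-one deepest block forces $p_0=0$, but it is the one place a careless reader could stumble.
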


\subsection{Operators of fractional differential order}

In the spirit of \cite[Th. 1]{MR3019721} it is natural to discuss the concept of the differential order of operators on a commutative associative algebra that emerges naturally from Proposition \ref{prop:GiventalDer}. For that, it is convenient to discuss everything in terms of homotopy circle actions. We say that $\{\Delta_i\}_{i\ge 0}$ is a homotopy circle action on a graded vector space $V$ if $|\Delta_i| = 2i-1$ and  
 \[
\sum_{i=0}^{n}\Delta_i\Delta_{n-i}=0.    
 \]
If we introduce formal variable $z$ with $|z|=-2$, such a structure may be faitfully encoded by a formal expression $\Delta=\sum_{i\ge 0}\Delta_iz^i$  satisfying $|\Delta|=-1$ and $\Delta^2=0$. 

\begin{definition}\label{def:FractionalOrder} 
Let $(V,\mu)$ be a graded commutative associative algebra and let $\{\Delta_i\}_{i\ge 0}$ be a homotopy circle action on $V$. We say that it is of \emph{homotopical differential order at most $\frac{d+1}{d}$} if each operator $\Delta_i$ is of differential order at most $\lfloor i/d\rfloor+1$. 
\end{definition}

In order words, $\Delta_0,\dots,\Delta_{d-1}$ are of differential order at most~$1$, $\Delta_d,\dots,\Delta_{2d-1}$ are of differential order at most~$2$, etc. This definition is very natural in the context of commutative homotopy BV algebras considered by Kravchenko~\cite{Kra}. Indeed, in this case one considers a specific homotopy transfer of a BV algebra structure that only produces non-trivial homotopies assembling into a homotopy circle action $\Delta$; for that action, the homotopical differential order is at most $2$. Furthermore, applying analogous construction to a $BV_k$-algebra (these are analogs of BV algebras where the BV operator is assumed to have the differential order $\leq k$, see~\cite[Def.~4.1.2]{MR4580527}), one induces a homotopy circle action $\Delta$ of homotopical differential order at most $k$, $k=1,2,3,\dots$.

Similarly, applying to the model of the operad $\D_{2d}\rtimes S^1$ that we use (see Proposition~\ref{prop:d-model}) a homotopy transfer that only produces non-trivial homotopies assembling into a homotopy circle action $\Delta$, we obtain a graded commutative associative algebra with a homotopy circle action of homotopical differential order at most $\frac{d+1}{d}$. Let us prove an analogue of results of~\cite{MR4580527,MR3019721} giving a characterization of fractional homotopical differential order.

\begin{proposition}\label{prop:GiventalDO} 
A homotopy circle action $\{\Delta_i\}_{i\ge 0}$ on a graded commutative associative algebra $(V,\mu)$ is of homotopical differential order at most $\frac{d+1}{d}$ if and only if 
 \[
X(\Delta)(f) = 0
 \]
for the operad morphism $f\colon\HyperCom_d\to \calE{}nd_V$ satisfying $f(\mu_n^{(k)})=0$ unless $k=d(n-1)-1$. 
\end{proposition}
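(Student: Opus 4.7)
The plan is to expand $X(\Delta)(f)(\mu_n^{(k)})$ using Corollary~\ref{cor:GiventalAdjoint}, exploit the severely restricted support of $f$ to drastically simplify the resulting sum, and match the outcome with a higher Koszul bracket whose vanishing encodes the differential order of $\Delta_i$.

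First I would apply Corollary~\ref{cor:GiventalAdjoint} to $X(\Delta_iz^i)(f)(\mu_n^{(k)})$ and demand that every factor $f(\mu_{S_j\sqcup\{\star\}}^{(p_j)})$ and $f(\mu_{S_0}^{(p_0)})$ in the summation be non-vanishing. The hypothesis on $f$ forces $p_0=d(|S_0|-1)-1$, $p_j=d|S_j|-1$ for $1\leq j\leq q-1$, and $p_q=d|S_q|-1-k$. Substituting these into the constraint $p_0+\cdots+p_q+q=i$ collapses to the single equation $i=d(n-1)-1-k$, singling out a unique index $i$ for each pair $(n,k)$. Hence the requirement $X(\Delta)(f)=0$ decouples into independent requirements on each individual $\Delta_i$.

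Next, I would inductively establish $f(\mu_n^{(d(n-1)-1)})=(-1)^n\mu^{(n)}$ for all $n\geq 2$ using the defining relations of $\HyperCom_d$ at $(n,d(n-1)-1)$. By commutativity, every composition $f(\mu_{S_q\sqcup\{\star\}}^{(\ast)})\circ_\star\cdots\circ_\star f(\mu_{S_1\sqcup\{\star\}}^{(\ast)})\circ_\star[\Delta_i,f(\mu_{S_0}^{(\ast)})]$ applied to $(a_1,\ldots,a_n)$ evaluates to $(-1)^{|S_0|}\prod_{j\geq 1}(-1)^{|S_j|+1}\cdot [\Delta_i,\mu^{(|S_0|)}](a_{S_0})\cdot a_{\underline{n}\setminus S_0}$, where $a_U=\prod_{u\in U}a_u$. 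Summing over ordered set compositions $(S_1,\ldots,S_q)$ of $\underline{n}\setminus S_0$ and setting $t=n-|S_0|$, the total coefficient is $(-1)^{|S_0|}c(t)$ with
\begin{equation*}
   c(t)=\sum_{q\geq 0}(-1)^{t+q}\,q!\,S(t,q),
\end{equation*}
where $S(t,q)$ is the Stirling number of the second kind. The exponential-generating-function identity $\sum_{q\geq 0}(-1)^q(e^x-1)^q=e^{-x}$ implies $\sum_q(-1)^qq!S(t,q)=(-1)^t$, so $c(t)=1$ for every $t\geq 0$, and the full equation reduces to
\begin{equation*}
    \sum_{\substack{S\subseteq\underline{n}\\|S|\geq 2}}(-1)^{|S|}\,[\Delta_i,\mu^{(|S|)}](a_S)\cdot a_{\underline{n}\setminus S}=0.
\end{equation*}
Expanding each commutator and regrouping by the subset on which $\Delta_i$ lands, the elementary identity $\sum_{u=0}^{n-1}\binom{n-1}{u}(-1)^u=0$ shows that the left-hand side equals $(-1)^n\Phi_{\Delta_i}^n(a_1,\ldots,a_n)$, where $\Phi_D^n=\sum_{\emptyset\neq I\subseteq\underline{n}}(-1)^{n-|I|}D(a_I)a_{\underline{n}\setminus I}$ is the $n$-th higher Koszul bracket of $D$.

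Finally, the Grothendieck--Koszul characterization of differential order yields that $\Phi_{\Delta_i}^n=0$ if and only if $\Delta_i$ has differential order at most $n-1$. Admissible pairs $(n,k)$ with $i=d(n-1)-1-k$ correspond to $n\geq\lfloor i/d\rfloor+2$, so the strongest condition comes from the smallest such $n$ and amounts to $\Delta_i$ being of differential order at most $\lfloor i/d\rfloor+1$, matching Definition~\ref{def:FractionalOrder}. The main obstacle is the inductive determination $f(\mu_n^{(d(n-1)-1)})=(-1)^n\mu^{(n)}$: it requires unpacking the defining relations of $\HyperCom_d$ and keeping track of the sign of each composite contribution after applying $f$. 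Once these signs are under control, the remainder of the argument rests on the elegant identity $c(t)=1$.
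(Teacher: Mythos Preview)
Your proposal is correct and follows essentially the same route as the paper's proof: decouple the condition $X(\Delta)(f)=0$ into separate conditions on each $\Delta_i$, use the inductive identity $f(\mu_n^{(d(n-1)-1)})=(-1)^n\mu^{(n)}$, collapse the resulting chain of iterated products, and identify what remains with the $n$-th Koszul bracket $\Phi_{\Delta_i}^n$.

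The only difference is in bookkeeping. The paper works directly with the single minimal generator $\mu_{s+2}^{(d(s+1)-1-i)}$ attached to a given $i$ and phrases the combinatorial collapse as ``the Euler characteristic of the reduced homology complex of an interval in the Boolean lattice''. You instead treat all admissible $(n,k)$ and make the collapse fully explicit via the Stirling-number identity $\sum_{q\ge 0}(-1)^q q!\,S(t,q)=(-1)^t$, which is exactly the same computation in a more hands-on form. Your version has the virtue of being self-contained; the paper's version has the virtue of pointing to the underlying Boolean-lattice combinatorics that also governs the $d=1$ Givental formulas.
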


\begin{proof}
Note that the morphism $f$ annihilates all elements of $\HyperCom_d$ of non-zero homological degree, and therefore Formula \ref{eq:GiventalUnary} implies that the contributions of $X(\Delta_iz^i)(f)(\mu_n^{d(n-1)-1})$ for different $i$ have different homological degrees, so $X(\Delta)(f) = 0$ implies $X(\Delta_iz^i)(f) = 0$ for each $i$ individually. Moreover, an easy inductive argument using the defining relations of the operad $\HyperCom_d$ shows that all $n$-ary operations obtained as iterations of $\mu_2^{(d-1)}$ are equal to each other and equal to $(-1)^n\mu_n^{(d(n-1)-1)}$. 

Suppose now that $s\le i/d<s+1$, so that $\lfloor i/d\rfloor+1=s+1$. Using the assumption $f(\mu_n^{(k)})=0$ unless $k=d(n-1)-1$ and the abovementioned formula for the iterations of $\mu_2^{(d-1)}$, we may simplify the expression of Formula \eqref{eq:GiventalUnary} for $X(\Delta_i z^i)(\mu_{s+2}^{d(s+1)-1-i})$. Essentially, all terms that appear there are of the form $\mu_{I\sqcup\{\star\}}^{(d|I|-1)}\circ_\star(\Delta_i\circ_1)\mu_{J}^{(d(|J|-1)-1)}$, with the coefficient equal, up to a sign, to the Euler characteristic of the complex computing the reduced homology of a certain interval in the Boolean lattice. This way, we obtain precisely the Koszul brace $\langle-,\ldots,-\rangle_{s+2}^{\Delta_i}$, proving that the operator $\Delta_i$ is of differential order at most $s+1$, as required.  
\end{proof}

\begin{remark}
In the light of this proposition and Definition~\ref{def:FractionalOrder}, it is natural to propose the name ${\frac{d+1}{d}}\text{-}BV$ for the model of the operad $\D_{2d}\rtimes S^1$ introduced above.
\end{remark}

\section{The Chen--Gibney--Krashen operad as a homotopy quotient}\label{sec:CGKHomotopyQuotient}

The goal of this section is to prove the following result. 

\begin{theorem}\label{th:homotopy-q}
We have a quasi-isomorphism of differential graded operads
 \[
(\HyperCom_d,0)\to {\textstyle\frac{d+1}{d}}\text{-}BV/\Delta,     
 \]
where the latter operad denotes the homotopy quotient of the operad $C_\bullet(\D_{2d}\rtimes S^1)$ by its natural circle action.
\end{theorem}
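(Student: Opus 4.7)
Thanks to Proposition~\ref{prop:d-model}, I will work with the small model $\Gerst_{2d}\rtimes U(\hat{\mathfrak{g}}_{S^1}^{(d)})$ of $C_\bullet(\D_{2d}\rtimes S^1)$, in which the circle action is implemented by the dg Lie algebra $\hat{\mathfrak{g}}_{S^1}^{(d)}$. The homotopy quotient by the circle action can be presented as the derived tensor product $(\Gerst_{2d}\rtimes U(\hat{\mathfrak{g}}_{S^1}^{(d)}))\otimes^L_{U(\hat{\mathfrak{g}}_{S^1}^{(d)})}\k$, computed concretely by tensoring with the Chevalley--Eilenberg-type resolution of the augmentation of $U(\hat{\mathfrak{g}}_{S^1}^{(d)})$. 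The resulting model is a dg operad whose underlying graded operad is $\Gerst_{2d}$ tensored with the symmetric algebra on degree-shifted duals of the generators $\Delta_1,\ldots,\Delta_d$, equipped with a twisted differential that simultaneously encodes the internal differential of $\hat{\mathfrak{g}}_{S^1}^{(d)}$, its bracket relations, and the action of each $\Delta_k$ on $\Gerst_{2d}$ as a derivation.

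The next step is to construct a morphism $f\colon(\HyperCom_d,0)\to{\textstyle\frac{d+1}{d}}\text{-}BV/\Delta$. Guided by the Givental action of Proposition~\ref{prop:GiventalDer} and the $\psi$-class formulas of Section~\ref{sec:PsiClasses}, I would send each generator $\mu_n^{(p)}$ to an explicit cocycle obtained from iterated Gerstenhaber products multiplied by a homogeneous combination of the dual variables whose total degree matches $2(d(n-1)-1-p)$. Verifying that this assignment preserves the relations of Theorem~\ref{th:CGK} for $m\ge d$ should reduce directly to the defining relations of $\hat{\mathfrak{g}}_{S^1}^{(d)}$, in particular to the higher-degree identities $\sum_{a+b=m}[\Delta_a,\Delta_b]=0$ for $d+1\le m\le 2d$, combined with the Leibniz rule for the action of each $\Delta_k$ on products in $\Gerst_{2d}$.

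To show that $f$ is a quasi-isomorphism, I would filter both sides by operadic arity and consider the associated spectral sequence. The $E^1$-page on the right-hand side should be identified, via a direct computation, with the Chevalley--Eilenberg complex of the circle acting trivially on $\Gerst_{2d}$, and so by the Koszul duality between $\widetilde{\Grav}_d$ and $\widetilde{\HyperCom}_d$ from Proposition~\ref{prop:Koszuldual} it should match $\widetilde{\HyperCom}_d$. The remaining differentials of the spectral sequence should then reproduce precisely the minimal $\mathsf{Op}_\infty$-structure on $\widetilde{\HyperCom}_d$ provided by Theorem~\ref{th:MHShigher}, whose cohomology is $\HyperCom_d$. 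Convergence of the spectral sequence is ensured by the splitting of the mixed Hodge structure on $H^\bullet(\CGK_d)$ established in Proposition~\ref{prop:split}.

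The main obstacle, I expect, is handling the contributions of the higher generators $\Delta_2,\ldots,\Delta_d$, which have no analogue in the classical $d=1$ case. These generators encode precisely the non-formality of the $S^1$-action for $d>1$, and matching their combined contribution to the differentials of the spectral sequence with the $\mathsf{Op}_\infty$-deformation of $\widetilde{\HyperCom}_d$ into $\HyperCom_d$ is the most delicate part of the argument. Unpacking the compatibility between the higher-degree bracket relations in $\hat{\mathfrak{g}}_{S^1}^{(d)}$ and the relations of $\HyperCom_d$ indexed by $m\ge d$ in Theorem~\ref{th:CGK} is likely to require careful combinatorial bookkeeping, paralleling the explicit computations done for $d=2$ immediately after that theorem.
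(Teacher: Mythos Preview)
Your overall architecture --- build an explicit map and then argue it is a quasi-isomorphism --- matches the paper, and the idea of using the Givental action to produce the map is correct. However, the second half of your plan has a genuine gap.

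First, a small but fatal technical point: filtering ``by operadic arity'' does not produce a spectral sequence. The differential of ${\textstyle\frac{d+1}{d}}\text{-}BV/\Delta$ preserves arity, so each arity component is already a stand-alone chain complex; there is no filtration-induced differential to analyze. Whatever filtration you intend, you need to name it, and it is not this one.

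Second, and more seriously, you have no mechanism to compute the homology of ${\textstyle\frac{d+1}{d}}\text{-}BV/\Delta$. Your proposed route --- identify an $E^1$-page with $\widetilde\HyperCom_d$ via Koszul duality with $\widetilde\Grav_d$, then recognize higher differentials as the $\mathsf{Op}_\infty$-structure of Theorem~\ref{th:MHShigher} --- does not type-check. Theorem~\ref{th:MHShigher} puts the minimal $\mathsf{Op}_\infty$-structure on $\widetilde\Grav_d$ (dually, a $\mathsf{coOp}_\infty$-structure on $\widetilde\Grav_d^*$), not on $\widetilde\HyperCom_d$; and Proposition~\ref{prop:split} is a statement about the mixed Hodge structure on the open strata $\CGK_d(n)$, which bears no direct relation to convergence of any spectral sequence for the homotopy quotient of the framed disks operad. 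There is no purely algebraic path in the paper from Koszul duality of $\widetilde\Grav_d$ and $\widetilde\HyperCom_d$ to the homology of $\D_{2d}\rtimes S^1$ modulo $S^1$.

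The paper closes this gap with substantial geometric input that your plan does not have: it invokes Lindstr\"om's log-geometric identification of the real-oriented-blowup/$S^1$-framed Chen--Gibney--Krashen spaces with $\D_{2d}\rtimes S^1$, together with Tu's Feynman compactification and Mondello's resolution, to obtain an \emph{arity-wise} quasi-isomorphism of chain complexes between $(C_\bullet(\D_{2d}\rtimes S^1)/_h S^1)(n)$ and $C_\bullet(\bCGK_d)(n)$. That geometric step only matches Euler characteristics / graded dimensions; the paper then needs a separate algebraic argument --- an explicit Gr\"obner-type basis $\mathsf{B}_d$ of $\HyperCom_d$ and a leading-term comparison inside the target --- to show that the Givental-constructed map $\theta=\exp(X(\phi(z)))(\theta_0)$ is injective on homology. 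Injectivity plus equal dimensions gives the quasi-isomorphism. Your plan would need something playing the role of both of these ingredients.
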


Various versions of this result for $n=1$ were proved by Drummond-Cole and Vallette \cite{MR3029946}, Khoroshkin and the first author \cite{MR3084563}, Khoroshkin, Markarian and the third author \cite{MR3079329}, two of the authors of the present paper and Vallette \cite{MR3426748}, Drummond-Cole \cite{MR3252959},  Tu \cite{tu2021categoricalenumerativeinvariantsground}, as well as Oancea and Vaintrob \cite{oancea2025delignemumfordoperadtrivializationcircle}. For $d>1$, the situation is a bit more intricate than in the $d=1$ case, due to the fact that the operad $\D_{2d}\rtimes S^1$ is not formal. Our proof will combine convenient features of many different proofs that exist for $d=1$, and we believe that staying within the context of just one of those proofs comes at a price: it either makes the argument more complicated, or does not construct a direct explicit quasi-isomorphism. 

\begin{proof}
Let us begin by noting that, completely analogously to \cite{MR3079329}, one can show that a differential graded operad ${\frac{d+1}{d}}\text{-}BV/\Delta$ modelling the homotopy quotient of ${\frac{d+1}{d}}\text{-}BV$ by the circle action may be obtained by adjoining to ${\frac{d+1}{d}}\text{-}BV$ a trivialization of the circle action, that is, unary operations $\phi_k$ with $|\phi_k|=2k$ for all $k\ge 1$, the differential $\partial$ on which is uniquely determined by the equation
 \[
\exp(-\phi(z))\partial\exp(\phi(z))=\partial+\Delta_1z+\cdots+\Delta_dz^d.    
 \] 
Our argument consists of three parts. First, we shall use the Givental action to construct an explicit morphism of differential graded operads from $(\HyperCom_d,0)$ to ${\frac{d+1}{d}}\text{-}BV/\Delta$. As our second step, we shall show by an algebraic argument that this morphism is injective on the homology. Finally, we shall employ a geometric argument to show that the two differential graded operads are quasi-isomorphic as chain complexes, thus completing the proof. 

There is a uniquely defined map $\theta_0$ from the operad $\HyperCom_d$ to the underlying operad of ${\frac{d+1}{d}}\text{-}BV/\Delta$, which we shall denote $\calO$ for brevity, for which $\theta_0(\mu_n^{(k)})=0$ unless $k=d(n-1)-1$, and $\theta_0(\mu_2^{(d-1)})=-\cdot-$. Thanks to Corollary \ref{cor:GiventalAction}, we may apply the Givental action of the element $\phi(z)\in G_{\calO}=\exp(\mathfrak{g}_{\calO})$ and define a map $\theta$ between the same operads for which 
 \[
\theta=\exp(X(\phi(z)))(\theta_0),    
 \]
where $X(-)$ is the vector field defined in Corollary \ref{cor:GiventalAdjoint}. 
Proposition \ref{prop:GiventalDO} implies that the semi-direct product relations can be written as  
 \[
X(\Delta_1z+\cdots+\Delta_dz^d)(\theta_0)=0.    
 \]   
Moreover, viewing the derivation $\partial$ as $\partial z^0$, we find that $X(\partial z^0)(f)=\partial(f)$, so we may write   
\[
X(\partial z^0+\Delta_1z+\cdots+\Delta_dz^d)(\theta_0)=0.    
 \]
Using the fact that $X(-)$ is a Lie algebra homomorphism, we find 
 \[
X(\partial z^0)(\theta)=X(\partial z^0)(\exp(X(\phi(z)))(\theta_0))=\exp(X(\phi(z)))(X(\partial z^0+\Delta_1z+\cdots+\Delta_dz^d)(\theta_0))=0 ,  
 \]
and therefore the map $\theta$ is compatible with the differentials, so it is a map of differential graded operads. This completes the first step of the proof.

Let us show that $\theta$ is injective on the homology. Examining the proof of Proposition \ref{prop:Koszuldual}, one immediately sees that the quadratic leading terms of relations for the operad $\widetilde\HyperCom_d$ are precisely the elements $m^k_{\underline{n}\setminus\{i,j\}\sqcup\{\star\}}\circ_\star m^{d-1}_{\{i,j\}}$ with $\{i,j\}\ne \{n-1,n\}$ and $k=0,\ldots,d-1$. Examining the proof of Theorem \ref{th:CGK}, we see that the same is true for the minimal set of generators $\mu^{(p)}_n$, $0\le p\le d-1$, of the operad $\HyperCom_d$, if we start our comparison by postulating that tree monomials with fewer internal vertices are larger, and we have the following inductive definition of a basis $\mathsf{B}_d$ of the operad $\HyperCom_d$ analogous to the basis of the operad $\HyperCom$ from \cite[Prop.~3]{MR4392457}:
\begin{itemize}
\item[\textbullet] the unit element belongs to $\mathsf{B}_d$,
\item[\textbullet] a shuffle composition $\mu_k^{(p)}(\tau_1,\ldots,\tau_k)$ belongs to $\mathsf{B}_d$ if and only if $\tau_1$, \ldots, $\tau_k$ belong to $\mathsf{B}_d$, and the top level operations (root vertices) of $\tau_1$, \ldots, $\tau_{k-1}$ are different from $\mu_2^{(d-1)}$. 
\end{itemize}
Since elements that do not contain any $\Delta_i$ are not contained in the image of the differential, to prove injectivity of $\theta$ on the homology it is enough to prove its injectivity on the nose, that is, to show that the images of elements of $\mathsf{B}_d$ are linearly independent in the operad ${\frac{d+1}{d}}\text{-}BV/\Delta$. The suboperad of ${\frac{d+1}{d}}\text{-}BV/\Delta$ generated by the commutative associative product $-\cdot-$ and all the operations $\{\phi_i\}$ has an obvious basis $\overline{\mathsf{B}}_d$ constructed as follows:
\begin{itemize}
\item[\textbullet] the unit element belongs to $\overline{\mathsf{B}}_d$,
\item[\textbullet] every element $\phi_i\circ_1 \tau$ with $\tau\in \overline{\mathsf{B}}_d$ belongs to $\overline{\mathsf{B}}_d$,  
\item[\textbullet] a shuffle composition $\tau_1\cdot \tau_2$ belongs to $\overline{\mathsf{B}}_d$ if and only if $\tau_1$ and $\tau_2$ belong to $\overline{\mathsf{B}}_d$, and the root vertex of $\tau_1$ is not labelled with $-\cdot-$. 
\end{itemize}

If we prioritize trees with fewer internal vertices in the operad ${\frac{d+1}{d}}\text{-}BV/\Delta$ as well, the image of $\mu_n^{(p)}$ under the morphism $\theta$ has the leading term $(-1)^n\phi_{d(n-1)-1-p}\circ_1 \mu_n^{(d(n-1)-1)}$. We note that the number $N=d(n-1)-1-p=d(n-2)+(d-1-p)$ allows us to reconstruct $n$ and $p$ uniquely, since we are considering the minimal system of generators, and hence $0\le p\le d-1$, and thus $n-2=\lfloor N/d\rfloor$ and $d-1-p=N-(n-2)d$. It follows that the leading terms of all basis elements of $\mathsf{B}_d$ are distinct basis elements of $\overline{\mathsf{B}}_d$, completing the proof of injectivity.

Finally, let us show that for each $n\ge 2$, we have an isomorphism of homology of the chain complexes $(C_*(\D_{2d}\rtimes S^1)/_h S^1)(n)$ and $(C_*(\bCGK_d))(n)$ (as indicated above, for this step we may ignore compatibility of isomorphisms with any structure on the homology). We already recalled in Section \ref{sec:PsiClasses} that $\bCGK_d(n)$ has $n+1$ line bundles $\calL_0,\dots,\calL_n$ such that the pull-back of the normal bundle of the stratum $D_J$ under the respective map $\circ_\star$ is isomorphic to $p_1^*\calL_\star \otimes p_2^*\calL_0$, where $p_1$ and $p_2$ are the projections to the first and the second factors in $\bCGK_d(I\sqcup\{\star\})\times \bCGK_d(J)$. We shall use these bundles to mimic the the construction of Kimura--Stasheff--Voronov~\cite{MR1341693} that is available in the case $d=1$. Namely, let $\hCGK_d(n)$ be the real oriented blow-up of $\bCGK_d(n)$ along the boundary $\bCGK_d\setminus \CGK_d$ (or, in other words, along the full image of the operadic composition in $\bCGK_d$) and let $\hCGK^{\mathrm{fr}}_d(n)$ denote the total space of the $(S^1)^{n+1}$-bundle over $\hCGK_d(n)$, where the individual $S^1$ factors are the $S^1$-bundles associated with $\calL_0,\dots,\calL_n$. The collection of spaces $\hCGK^{\mathrm{fr}}_d(n)$ is naturally endowed with a structure of a topological $S^1$-operad inherited from the operad structure on $\bCGK_d$. This construction is performed by Lindström~\cite{lindström2025loggeometricmodelslittle} using analytification of the log geometric extension of $\bCGK_d$, generalizing how it is done for $d=1$ by Vaintrob~\cite{vaintrob2019moduliframedformalcurves,vaintrob2021formalitylittledisksalgebraic}. In particular, according to~\cite[Th.~5.9]{lindström2025loggeometricmodelslittle}, the $S^1$-operad $\hCGK^{\mathrm{fr}}_d$ is homeomorphic to $ \D_{2d}\rtimes S^1 $. We use this realization of $\D_{2d}\rtimes S^1$ to employ the approach of Tu~\cite{tu2021categoricalenumerativeinvariantsground} to the homotopy quotient.

There are two steps in the argument of \cite{tu2021categoricalenumerativeinvariantsground} that shall use. First, from an $S^1$-operad one passes to its Feynman compactification (the terminology reflects the fact that \cite{tu2021categoricalenumerativeinvariantsground} works with modular operads). For that, we consider the chain operad $C_\bullet(\hCGK^{\mathrm{fr}}_d)$ and form a new operad $\calF C_\bullet(\hCGK^{\mathrm{fr}}_d)$ modelled by a direct sum over stable rooted trees whose vertices with $m$ inputs are decorated by homotopy $S^1$-quotients of $C_\bullet(\hCGK^{\mathrm{fr}}_d)(m)$ considered as $(S^1)^{m+1}$-modules, cf.~\cite[Sec.~2.4-2.7]{tu2021categoricalenumerativeinvariantsground}. The operad $\calF C_\bullet(\hCGK^{\mathrm{fr}}_d)$ gives a model for $C_\bullet(\hCGK^{\mathrm{fr}}_d/_h S^1)$. Second, one may use the resolution of Mondello~\cite{Mondello} initially tailored for homotopically inverting the real oriented blow-up of a normal crossing divisor in the realm of simplicial spaces. In our situation, repeating \emph{mutatis mutandis} all steps of ~\cite[Sec.~3.4-3.8]{tu2021categoricalenumerativeinvariantsground}, we obtain for each $n\ge 2$ a chain complex $\mathrm{Tot}(\calF C_\bullet(\hCGK^{\mathrm{fr}}_d(n))_\bullet)$; an analogue of \cite[Th.~3.6]{tu2021categoricalenumerativeinvariantsground} ensures
that it is quasi-isomorphic to $C_\bullet(\bCGK_{d}(n))$, while an analogue of \cite[Prop.~3.8]{tu2021categoricalenumerativeinvariantsground} ensures that it is quasi-isomorphic to $\calF C_\bullet(\hCGK^{\mathrm{fr}}_d)(n)$, which represents $C_\bullet(\hCGK^{\mathrm{fr}}_d/_h S^1)(n)$, and hence $C_\bullet((\D_{2d}\rtimes S^1)/_h S^1)(n)$. This completes the proof. 
\end{proof}

\begin{remark}
Examining the last part of the above proof, one finds that this line of argument alone is not sufficient to directly conclude that the operads $C_*(\D_{2d}\rtimes S^1)/_h S^1$ and $C_*(\bCGK_d)$ are quasi-isomorphic, and the above argument only furnishes an arity-wise quasi-isomorphism of chain complexes. Thus, some extra arguments are needed to ensure that the operads are quasi-isomorphic, and we believe that our argument accomplishes that goal in a rather succinct way.
\end{remark}

\section{The \texorpdfstring{$d\to\infty$}{dinf} (co)limit of the Chen--Gibney--Krashen spaces}\label{sec:DJcolimit}

In this section, we discuss our original motivation and explain how the main results of the paper relate to it. 

\subsection{The operad \texorpdfstring{$1\text{-}\HyperCom$}{1Hycom}}
As we recalled in the introduction, in the article \cite{MR4580527}, two of the authors of the present paper and Tamaroff introduced a family of operads depending on a positive integer parameter $k$, which they denoted $k\text{-}\HyperCom$. For $k\ge 2$, these operads are somewhat similar to the operad $\HyperCom$ of Getzler: they are generated by symmetric operations $\mu_n$ of all possible arities $n\ge 2$ of certain even homological degrees that satisfy quadratic relations analogous to those of Getzler. (However, their dimensions of graded components do not possess the same agreeable properties, and, for instance, one can check that they are not homology operads of smooth complex projective varieties; a geometrical description of those operads is an open problem.) For $k=1$, the corresponding operad is qualitatively different: it is generated by infinitely many commutative binary operations $\nu_2^{(p)}$ of all possible even homological degrees $2p\ge 0$ that satisfy the following generalized associativity relations:
\begin{equation}\label{eq:1HyCom}
\sum_{p+q=m}\nu_2^{(p)}\circ_1\nu_2^{(q)}=\sum_{p+q=m}\nu_2^{(p)}\circ_2\nu_2^{(q)}, \quad m\ge 0 .    
\end{equation}
There is one almost obvious ``topological'' operad whose homology is exactly this algebraic operad. The quotation marks suggest that it is not like most topological operads one knows, and indeed it is not. Namely, let us consider the operad
 \[
\coprod_{x\in\mathbb{C}P^\infty} \Com,    
 \]
the coproduct of copies of the operad $\Com$ indexed by points of $\mathbb{C}P^\infty$, with obvious topology on it. As an operad, it is generated by commutative binary operations $\nu_x$, $x\in\mathbb{C}P^\infty$, satisfying the relations
\begin{equation}\label{eq:1HyComTop}
\nu_x\circ_1\nu_x=\nu_x\circ_2\nu_x  .   
\end{equation} 
It easily follows that the homology of this operad is generated by commutative binary operations $H_\bullet(\mathbb{C}P^\infty)$. Furthermore, Relation \eqref{eq:1HyComTop} uses the diagonal $\Delta\colon \mathbb{C}P^\infty\to \mathbb{C}P^\infty\times \mathbb{C}P^\infty$, and so it is almost obvious that it induces precisely Relation \eqref{eq:1HyCom} on the homology. However, since an operator of differential order at most $1$ is also an operator of differential order at most $2$, there is a map from the operad $\HyperCom$ to the operad $1\text{-}\HyperCom$; thus, it is desirable to be able to model this map on the topological level. For the coproduct of copies of $\Com$, there does not seem to be any map of this sort. 

\subsection{The Stanley--Reisner rings appear}
In order to make an educated guess for the unknown topological operad behind the operad $1\text{-}\HyperCom$, one can compute the dimensions of graded components of $1\text{-}\HyperCom(n)$ (which would be the Betti numbers of the $n$-th component of the corresponding topological operad). The answer turns out to be given by the elegant formula 
 \[
\frac{P_n(q^2)}{(1-q^2)^n},     
 \] 
where the polynomial $P_n(q^2)$ is the generating function for the elements of the $n$-th row of the second-order Eulerian triangle \cite{oeis-euler}. Moreover, the series $\frac{P_n(q)}{(1-q)^n}$ is known \cite{MR2195566} to be the generating series for dimensions of components of the commutative ring
 \[
\mathbb{Q}[x_I\colon I\subseteq\underline{n}, |I|\ge 2]/(x_Ix_J\colon I\cap J\ne\varnothing, I\not\subseteq J, J\not\subseteq I),     
 \]
which we recognize as the Stanley--Reisner ring of the nested set complex for the minimal building set of the lattice of partitions of $\underline{n}$. 

\begin{definition}
Let $\Gamma$ be a simplicial complex on a vertex set $V$. The \emph{Davis--Januszkiewicz space} $DJ(\Gamma)$ associated to $\Gamma$ is the subset of the product $\prod_{v\in V}\mathbb{C}P^\infty$ obtained as the union of subsets 
 \[
DJ(\sigma)=\prod_{v\in \sigma}\mathbb{C}P^\infty\times\prod_{v\notin\sigma} \{\mathrm{pt}\}    
 \]
for all faces $\sigma$ of $\Gamma$. 
\end{definition}

This definition is a particular case of a construction introduced by Anick \cite[p.~344]{MR764587} under the name $\Gamma$-wedge. The name and the notation, nowadays usual in the literature, refers to the work of Davis and Januszkiewicz~\cite{MR1104531} who in particular proved that the Stanley--Reisner ring $SR(\Gamma)$ associated to $\Gamma$ is the cohomology ring of $DJ(\Gamma)$. 

Thus, we have the operad $1\text{-}\HyperCom$ which seems to be related to the Davis--Januszkiewicz spaces for the nested set complexes associated to the minimal building sets of the partition lattices, and the operad $\HyperCom$, which is the homology operad of the Deligne--Mumford operad; whose components $\overline{\calM}_{0,n+1}$ are the De Concini--Procesi wonderful compactifications of the braid arrangements associated to the very same minimal building sets of the partition lattice. This was a useful starting hint for relating the two pictures, which led us to the following result. 

\begin{theorem}
For $d<d'$, we have a map of operads $\bCGK_d\to \bCGK_{d'}$. For the corresponding colimit, we have 
 \[
H^\bullet(\colim_{d\to\infty}\bCGK_d(n))\cong SR(N(\calL(\Delta_n),\calG_{\min}(\Delta_n)))    
 \]
and  
 \[
H_\bullet(\colim_{d\to\infty}\bCGK_d)=\colim_{d\to\infty}\HyperCom_d\cong 1\text{-}\HyperCom.    
 \]  
\end{theorem}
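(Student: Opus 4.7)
The plan is three-fold: (i) construct the operadic maps, (ii) compute the cohomology of the colimit, (iii) identify the homology colimit with $1\text{-}\HyperCom$.

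The maps come from applying the ``multiples of an arrangement'' construction of Section~\ref{sec:dcp} to the braid arrangement. The linear inclusion $\imath_{d,d'}\colon V_n^{\oplus d}\hookrightarrow V_n^{\oplus d'}$ induces a morphism of wonderful models $Y_{\calL^{(d)}(\Delta_n),\calG^{(d)}_{\min}(\Delta_n)}\to Y_{\calL^{(d')}(\Delta_n),\calG^{(d')}_{\min}(\Delta_n)}$, which by the Gallardo--Routis identification is an embedding $\bCGK_d(n)\hookrightarrow\bCGK_{d'}(n)$. Operadic compatibility follows from the $\mathfrak{LBS}$-operad formalism of Coron: the partition lattices and their minimal building sets do not depend on $d$, and the structure maps \eqref{eq:operad-DCP} are manifestly natural in the ambient vector space.

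For cohomology, I use the presentation $H^\bullet(\bCGK_d(n))\cong SR(N(\calL(\Delta_n),\calG_{\min}(\Delta_n)))/I_d$, where $I_d$ is generated by $(\sum_{\{i,j\}\subseteq I}x_I)^d$ for each $i\ne j$. The pullback sends each $x_G$ to $x_G$, so the cohomology tower is a chain of surjections $\cdots\twoheadrightarrow SR/I_{d'}\twoheadrightarrow SR/I_d$. In any fixed cohomological degree $k$, the generators of $I_d$ live in degree $2d$, so once $d>k/2$ they impose no constraint in degree $\le k$; the tower is eventually constant there, $\lim^1$ vanishes, and the Milnor exact sequence yields $H^\bullet(\colim_d\bCGK_d(n))\cong SR(N(\calL(\Delta_n),\calG_{\min}(\Delta_n)))$.

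Continuity of homology for sequential colimits of CW-inclusions then gives $H_\bullet(\colim_d\bCGK_d)=\colim_d\HyperCom_d$. I define $\phi\colon 1\text{-}\HyperCom\to\colim_d\HyperCom_d$ on generators as follows. In arity $2$, $\bCGK_d(2)\cong\mathbb{C}P^{d-1}$ with the tower being the standard linear chain, so $\mu_2^{(p)}=\mathrm{PD}(\sigma_{\{1,2\}}^p)\in H_{2(d-1-p)}$ is sent to $\mu_2^{(p+(d'-d))}$. Reindexing by $q=d-1-p$ yields a well-defined stable element of degree $2q$ in the arity-$2$ colimit, which I take as $\phi(\nu_2^{(q)})$. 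To check the defining relations \eqref{eq:1HyCom}, I exploit the arity-$3$ relations of Theorem~\ref{th:CGK} with $m\geq d$: for each choice of pair $\{i,j\}\subset\underline{3}$, the relation has the form $\mu_3^{(m)}+\sum_{p_1+p_2=m-1}\mu_2^{(p_2)}\circ_\star\mu_2^{(p_1)}=0$, with the binary summand determined by that pair. Subtracting the relations for two different pairs eliminates $\mu_3^{(m)}$ and produces exactly a version of \eqref{eq:1HyCom}. Surjectivity of $\phi$ follows from the observation that $\mu_n^{(p)}|_d$ maps to $\mu_n^{(p+(d'-d)(n-1))}|_{d'}$; for $n\geq 3$ and $d'$ large, this index exceeds $d'$, so Theorem~\ref{th:CGK} expresses it as a combination of compositions of lower-arity operations, and iterating reduces everything to binary.

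The main obstacle is injectivity of $\phi$, and the cleanest resolution is a Hilbert series comparison. Because each $\bCGK_d(n)$ is a smooth compact complex variety, Poincaré duality gives $\dim H_{2k}(\bCGK_d(n))=\dim(SR(N)/I_d)_k$, which stabilizes to $\dim SR(N)_k$ as $d\to\infty$. Thus the Hilbert series of $\colim_d\HyperCom_d(n)$ coincides with that of $SR(N(\calL(\Delta_n),\calG_{\min}(\Delta_n)))$ with the grading doubled, namely $\frac{P_n(q^2)}{(1-q^2)^n}$, which matches the known Hilbert series of $1\text{-}\HyperCom(n)$. Combined with the surjectivity of $\phi$, this forces $\phi$ to be an isomorphism in each arity and each degree, completing the proof.
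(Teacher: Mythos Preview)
Your approach is essentially the paper's, with more detail filled in (the paper simply asserts that only the $\mu_2^{(p)}$ ``survive'' and that their relations are precisely those of $1\text{-}\HyperCom$, whereas you spell out the map $\phi$, verify the relations by subtracting two arity-$3$ identities, and close with a Hilbert series count). One step, however, is not justified and should be replaced.

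You assert that $\mu_n^{(p)}|_d$ maps to $\mu_n^{(p+(d'-d)(n-1))}|_{d'}$ for $n\geq 3$. By the projection formula this would require the Poincar\'e dual of $\bCGK_d(n)$ inside $\bCGK_{d'}(n)$ to be exactly $\sigma_{\underline{n}}^{(d'-d)(n-1)}$; you have only verified this for $n=2$ (where it is the class of a linear $\mathbb{C}P^{d-1}\subset\mathbb{C}P^{d'-1}$), and for larger $n$ the class of the embedded wonderful model could a priori involve the other $\sigma_G$'s. Fortunately the exact formula is not needed for surjectivity. Since the operad $\HyperCom_d$ is graded and every minimal generator $\mu_m^{(p)}$ with $m\geq 3$ has degree $2(d(m-2)+(d-1-p))\geq 2d$, any tree monomial of degree $2k$ in $\HyperCom_d$ with $d>k$ is built entirely from binary generators; hence in each fixed degree the colimit is already spanned by compositions of the $\nu_2^{(q)}$. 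This is the mechanism behind the paper's phrase ``only the generators $\mu_2^{(p)}$ survive'', and it replaces your iteration argument cleanly.
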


\begin{proof}
Let us first observe that, because of realizations of the Chen--Gibney--Krashen spaces as wonderful compactifications, we have the maps 
 \[
\bCGK_d(n)(\mathbb{C})\to \overline{Y}_{\calL(\Delta_n^{(d)}),\calG_{n,\min}^{(d)}} \to     
\overline{Y}_{\calL(\Delta_n^{(d')}),\calG_{n,\min}^{(d')}}\cong \bCGK_{d'}(n)     
 \]
for all $d<d'$; since the operad map comes from inclusion of boundary divisors of the wonderful compactification, it is clear that this map is a map of operads. 

From the description of the generators $\sigma_G$ in Section \ref{sec:dcp}, it easily follows that the induced map of cohomology rings 
 \[
H^\bullet(\bCGK_{d'}(n))\to  H^\bullet(\bCGK_{d}(n))   
 \]
sends every generator $\sigma_G$ on the left to the same generator on the right. It then follows from Milnors $\lim^1=0$ theorem \cite[Th.~7.75]{MR385836} that for the space $\colim_d\bCGK_{d}(n)$ the cohomology ring is precisely the Stanley--Reisner ring. (In the next section, we shall prove a stronger assertion that this colimit is homotopy equivalent to the corresponding Davis--Januszkiewicz space.) As we saw in the proof of Theorem \ref{th:CGK}, the minimal set of generators of the operad $\HyperCom_d$ are the elements $\mu_n^{(p)}$ with $0\le p\le d-1$ of degrees 
 \[
|\mu_n^{(p)}|=2(d(n-1)-1-p)=2(d(n-2)+(d-1-p)).    
 \]
The latter formula for the degree shows that only the generators $\mu_2^{(p)}$ ``survive'' in the limit $d\to\infty$; moreover, it is easy to see that the relations satisfied by these generators are precisely the relations of the operad $1\text{-}\HyperCom$, if we let $\nu_2^{(p)}:=\mu_2^{(d-1-p)}$ and let $d\to\infty$. 
\end{proof}

It is worth noting that the operad we thus constructed does not have an obvious relationship to the coproduct of copies of $\Com$ we discussed above; for instance, it is easy to show that there is no map of operads from that coproduct to $\colim_d\bCGK_{d}$ that is a homotopy equivalence on the level of spaces, and, in fact, no map of operads from that coproduct to $\colim_d\bCGK_{d}$ at all.

\begin{remark}
For the diagram we displayed in the introduction, only the top row is sufficiently nontrivial on the level of spaces. Namely, the homotopy quotient of the $S^1$-framing of the operad $\colim_{d\to\infty}\D_{2d}$ by the circle action is given by $\colim\limits_{d\to\infty}\bCGK_d$for each $n$, but the colimit of $\D_{2d}(n)$ is contractible, and the colimit of $\CGK_d(n)$ is the quotient of the latter contractible space by a free circle action, and thus is nothing but $BS^1$. However, if we pass to the corresponding algebraic operads, the bottom left corner of the diagram becomes nontrivial, for the operad in that corner is $\colim_{d\to\infty}\widetilde\Grav_d$, which we define as the $2d$-fold suspension of the operad of Westerland, leading to the property $\colim_{d\to\infty}\widetilde\Grav_d\cong \mathcal{S}1\text{-}\HyperCom^!$.  
\end{remark}

\subsection{Colimits of multiplication by \texorpdfstring{$d$}{PDFstring} and the Davis--Januszkiewicz spaces}

In this section, we prove (a generalization of) the claim made earlier about the relationship between the Chen--Gibney--Krashen spaces and the Davis--Januszkiewicz spaces. Note that the colimit
 \[
Y^{\infty}_{\calL,\calG}:=\colim_d Y_{\calL^{(d)},\calG^{(d)}}  
 \]
is a subset of 
 \[
\colim_{q}\prod_{G\in\calG} \mathbb{P}(V/G^{\perp})^{\oplus q}\cong 
\prod_{G\in\calG}\colim_{q} \mathbb{P}(V/G^{\perp})^{\oplus q} ,   
 \]
we shall use the notation $\mathbb{C}P^{\infty}_{G}$ for the factor 
 \[
\colim_{q} \mathbb{P}(V/G^{\perp})^{\oplus q}
 \] 
corresponding to $G\in\calG$. In this section, we shall show that for any building set $\calG$ of any subspace arrangement $\calL$, the space $Y^{\infty}_{\calL,\calG}$ has the homotopy type of the Davis--Januszkiewicz space 
 \[
DJ_{\calL,\calG}:=\bigcup_{S\in N(\calL,\calG)}\prod_{I\in S}\mathbb{C}P^\infty\times\prod_{I\notin S}\{\mathrm{pt}\}\subset \prod_{G\in\calG} \mathbb{C}P^\infty,  
 \]
where $\mathrm{pt}$ denotes a base point in $\mathbb{C}P^\infty$. The argument is organized as follows. It is clear that we may choose different spaces that are homotopy equivalent to $\mathbb{C}P^\infty$ as different factors without changing the homotopy type of the result. The natural choice is to take the factor corresponding to $G\in\calG$ to be $\mathbb{C}P^{\infty}_{G}$, so that the spaces $DJ_{\calL,\calG}$ and $Y^{\infty}_{\calL,\calG}$ are subspaces of the same space
 \[
\mathbb{P}_{\calL,\calG}:=\prod_{G\in\calG} \mathbb{C}P^\infty_G.     
 \]
The goal of this section is to prove the following result. 

\begin{theorem}\label{thm:spheq}
The subspaces $DJ_{\calL,\calG}$ and $Y^{\infty}_{\calL,\calG}$ of the space~$\mathbb{P}_{\calL,\calG}$ are homotopy equivalent.
\end{theorem}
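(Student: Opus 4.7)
The plan is to identify both $DJ_{\calL,\calG}$ and $Y^\infty_{\calL,\calG}$ with the same homotopy colimit
\[
\mathrm{hocolim}_{S \in N(\calL,\calG)} \prod_{G \in S} \mathbb{C}P^\infty_G,
\]
indexed by the face poset of the nested set complex. For the Davis--Januszkiewicz side this identification is the classical polyhedral-product statement: the canonical projection onto the homotopy colimit is a weak equivalence, with the structure maps given by coordinate inclusions that send the missing factors to the chosen basepoint of $\mathbb{C}P^\infty_G$.

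For $Y^\infty_{\calL,\calG}$, the plan is to transfer the nested-set stratification of each wonderful model $Y_{\calL^{(d)},\calG^{(d)}}$ to the colimit. By the discussion of Section~\ref{sec:dcp}, each closed stratum $D_G$ factorizes as a product of two smaller wonderful models; iterating this for a nested set $S$ yields a product $\prod_v Y_{\calL^{(d)}_v,\calG^{(d)}_v}$ whose factors come from the arrangements $(\calL_v,\calG_v)$ attached to the vertices of the tree encoding $S$. Passing to the colimit in $d$, these factorizations assemble compatibly under the transition maps $\imath_{d,d+1}$ and produce a stratification of $Y^\infty_{\calL,\calG}$ whose closed stratum indexed by $S$ is $\prod_v Y^\infty_{\calL_v,\calG_v}$. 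Proceeding by induction on the rank of $\calL$, I would then show that each atomic factor is homotopy equivalent to $\mathbb{C}P^\infty_{G_v}$, the base case being a single projective pencil where $Y^\infty$ is tautologically $\mathbb{C}P^\infty$. A bar construction / Mayer--Vietoris argument converts this stratification into a homotopy colimit presentation matching the one above for $DJ_{\calL,\calG}$, which yields the desired equivalence.

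The main obstacle will be verifying the inductive claim $Y^\infty_{\calL_v,\calG_v}\simeq\mathbb{C}P^\infty_{G_v}$ together with the coherence of the resulting equivalences as functors out of $N(\calL,\calG)$. Concretely, one must check that in the $d\to\infty$ colimit the De Concini--Procesi relations (which involve $d$-th powers) become vacuous, while the Stanley--Reisner relations persist, so that the cellular structure of the wonderful model degenerates onto that of the polyhedral product. I expect this to be largely formal once one unpacks the behaviour of the transition maps $Y_{\calL^{(d)},\calG^{(d)}}\hookrightarrow Y_{\calL^{(d+1)},\calG^{(d+1)}}$ on boundary strata, but the careful setup of the bar-construction replacement and the coherent choice of basepoints for the inductive equivalences is where the delicate combinatorics will intervene. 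Finally, one would verify that the resulting abstract homotopy equivalence can be realized as a map of subspaces of $\mathbb{P}_{\calL,\calG}$, using the evident inclusion $DJ_{\calL,\calG}\hookrightarrow Y^\infty_{\calL,\calG}$ built from the deepest-stratum embeddings $\prod_{G\in S}\mathbb{C}P^\infty_G\hookrightarrow Y^\infty_{\calL,\calG}$.
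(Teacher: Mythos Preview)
Your overall strategy—identify both spaces with a homotopy colimit over the face poset of $N(\calL,\calG)$—is a plausible route and genuinely different from the paper's argument, but there is a real gap in the inductive claim at its heart.

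The assertion that ``each atomic factor $Y^{\infty}_{\calL_v,\calG_v}$ is homotopy equivalent to $\mathbb{C}P^{\infty}_{G_v}$'' is not correct. For a non-maximal nested set $S$, the closed stratum $D_S$ of $Y^{\infty}_{\calL,\calG}$ is a product of $Y^{\infty}$'s of smaller arrangements, but those smaller arrangements generically still carry nontrivial building sets, and the corresponding $Y^{\infty}_{\calL_v,\calG_v}$ is a full Davis--Januszkiewicz space (by the inductive hypothesis), not a single $\mathbb{C}P^{\infty}$. So your diagram over $N(\calL,\calG)$ does not have the terms $\prod_{G\in S}\mathbb{C}P^{\infty}_G$; it has products of smaller $DJ$'s. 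Reconciling that with the polyhedral-product hocolim for $DJ_{\calL,\calG}$ is not the formal bookkeeping step you describe—it is essentially the whole theorem again, now phrased as a coherence problem for an infinite family of equivalences. Your remark that the De Concini--Procesi relations ``become vacuous'' is a cohomological statement, and by itself gives only an isomorphism of cohomology rings, not the homotopy equivalence you need.

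What is missing is a geometric mechanism explaining \emph{why} the colimit simplifies. The paper supplies one: rather than attacking the full stratification at once, it removes a single element $G\in\calG$ at a time and uses that $Y_{\calL^{(d)},\calG^{(d)}}$ is the blowup of $Y_{\calL'^{(d)},\calG'^{(d)}}$ along a copy of $Y_{\calL_G^{(d)},\calG_G^{(d)}}$. The key lemma is that in the colimit $d\to\infty$ the normal bundle of the blowup center has infinite rank, so its sphere bundle has contractible fibre $S^{\infty}$; this forces the colimit of the blowups to be homotopy equivalent to the pushout of
\[
Y^{\infty}_{\calL_G,\calG_G}\times\mathbb{C}P^{\infty}_{G}\ \longleftarrow\ Y^{\infty}_{\calL_G,\calG_G}\ \longrightarrow\ Y^{\infty}_{\calL',\calG'}.
\]
The Davis--Januszkiewicz space satisfies exactly the same pushout relation (strictly, not just up to homotopy), so induction on $|\calG|$ finishes the argument. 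This infinite-rank normal bundle observation is the substantive content; without it, a stratification argument like yours has no leverage to collapse the wonderful-model factors down to copies of $\mathbb{C}P^{\infty}$.
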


\begin{proof}
We argue by induction on $|\calG|$. If $\calG$ is empty, we clearly have $Y^\infty_{\calL,\calG}\cong \mathbb{C}P^\infty\cong DJ_{\calL,\calG}$. Suppose that we wish to prove our assertion for $\calG$, and it is proved for all building sets of smaller cardinality. 
We shall now make a choice of a specific element $G\in\calG$. If all elements of $\calG$ are indecomposable (do not admit nontrivial direct sum decompositions with all summands in $\calG$), let us take for $G$ a minimal element of $\calG$, otherwise let us take for $G$ a minimal decomposable element of $\calG$. In each of these cases $\calG':=\calG\setminus\{G\}$ is known to be a building set of a suitable subspace arrangement $\calL'$, see \cite[Prop. 2.3(4)]{MR1366622} for the first case and \cite[Prop. 2.5(1)]{MR1366622} for the second one. Moreover, and $Y_{\calL,\calG}$ is obtained from $Y_{\calL',\calG'}$ by blowing up a subvariety isomorphic to $Y_{\calL_G,\calG_G}$ see \cite[Th. 3.2(4)]{MR1366622} for the first case, and \cite[Th. 3.2(3), Th. 4.3]{MR1366622} for the second one. 

We shall need a general result about colimits of blowups. Let $Z_{0}\subset Z_{1}\subset\cdots$ be a sub-filtration of a filtration $X_{0}\subset X_{1}\subset\cdots$, where all arrows are closed embeddings of smooth complex manifolds. We shall denote $B_{q}:=\mathrm{Bl}_{Z_{q}} X_{q}$ and consider the colimits 
 \[
Z:=\colim Z_{q},\quad   X:=\colim X_{q},\quad  B:=\colim B_{q}.    
 \]
Let $\nu_{q}:=\nu(Z_{q}\to X_{q})$ be the normal bundle of each of the embeddings, and denote $\nu:=\colim \nu_{q}$. We shall use the open (respectively, closed) disk bundle $D^{\circ}(\nu)$ (respectively, $D(\nu)$) associated with $\nu$, as well as the corresponding sphere bundle $S(\nu)$. 

\begin{proposition}\label{prop:limblow}
Suppose that $\mathrm{rk}\nu= \infty$.
Then $B$ is homotopy equivalent to the pushout $\tilde{B}$ of the diagram
\begin{equation}\label{eq:binfdesc}
\begin{tikzcd}
Z\times \mathbb{C}P^{\infty} & Z \arrow{l}\arrow{r} & X ,
\end{tikzcd}
\end{equation}
where the left arrow is defined using a basepoint of~$\mathbb{C}P^{\infty}$. Moreover, a homotopy equivalence is implemented by a map 
 \[
h\colon \tilde{B}\to B    
 \]
whose restrictions of $h$ to $Z\times \mathbb{C}P^{\infty}$ and $X\setminus Z$ are the exceptional divisor embedding and the identity map, respectively.
\end{proposition}
\begin{proof}
Following \cite{MR2163611}, we identify $\mathrm{Bl}_{Z_{q}} X_{q}$ with the pushout of the diagram
 \[
\begin{tikzcd}
\mathbb{P}(\nu_{q})& S(\nu_{q}) \arrow{l}\arrow{r} & X_{q}\setminus D^{\circ}(\nu_{q}) .
\end{tikzcd}   
 \]
Let us consider the commutative diagram
\begin{equation}\label{eq:bigbl}
\begin{tikzcd}
\mathbb{P}(\nu_{q})\arrow[equal]{d} & S(\nu_{q}) \arrow{l}\arrow[equal]{d}\arrow{r} & X_{q}\setminus D^{\circ}(\nu_{q})\arrow{d}\\
\mathbb{P}(\nu_{q}) & S(\nu_{q}) \arrow{l}\arrow{r} & X_{q}
\end{tikzcd}
\end{equation}
In \eqref{eq:bigbl}, the left horizontal arrows are equal to the quotient map by the natural fiberwise $S^1$-action, and the right horizontal arrows are the natural embeddings. The colimit of \eqref{eq:bigbl} with respect to $q$ yields another commutative diagram
\begin{equation}\label{eq:infbl}
\begin{tikzcd}
\mathbb{P}(\nu)\arrow[equal]{d} & S(\nu) \arrow{l}\arrow[equal]{d}\arrow{r} & X\setminus D^{\circ}(\nu)\arrow{d}\\
\mathbb{P}(\nu) & S(\nu) \arrow{l}\arrow{r} & X
\end{tikzcd}
\end{equation}
Since colimits commute, the space $B$ is the colimit of the top row in \eqref{eq:infbl}. Let us show that the right vertical arrow in \eqref{eq:infbl} is a homotopy equivalence. For that, we consider a commutative diagram
\begin{equation}\label{eq:mpush}
\begin{tikzcd}
S(\nu)\arrow{d} & S(\nu) \arrow[equal]{l}\arrow[equal]{d}\arrow{r} & X\setminus D^{\circ}(\nu) \arrow[equal]{d}\\
D(\nu) & S(\nu) \arrow{l}\arrow{r} & X\setminus D^{\circ}(\nu)
\end{tikzcd}
\end{equation}
whose row-wise pushouts are $X\setminus D^{\circ}(\nu)$ and $X$, respectively, and whose induced maps of row-wise pushouts is precisely the right vertical arrow in \eqref{eq:infbl}. Since every row arrow in \eqref{eq:mpush} is a closed embedding, the rows of \eqref{eq:mpush} are cofibrant diagrams.
This implies that the row-wise pushouts equal the homotopy pushouts. By \cite{MR179792}, $\nu$ is a trivial complex vector bundle on $Z$. Since $\nu$ is of infinite rank, the fiber of $S(\nu)$ is a contractible sphere $S^{\infty}$. Therefore, the left vertical arrow of \eqref{eq:mpush} is a homotopy equivalence. By the standard property of homotopy colimits this implies that the induced map of homotopy pushouts is a homotopy equivalence.

Continuing similarly, we note the right horizontal arrows in \eqref{eq:bigbl} and \eqref{eq:infbl} are closed embeddings, the row diagrams in each of these are cofibrant, and hence the homotopy pushouts of the row diagrams coincide with usual pushouts. Therefore, the row-wise pushouts in \eqref{eq:infbl} are homotopy equivalent, and it remains to find the pushout for the bottom row of \eqref{eq:infbl}.
Consider finally the commutative diagram
\begin{equation}\label{eq:trivb}
\begin{tikzcd}
Z\times \mathbb{C}P^{\infty}\arrow{d} & Z \arrow{l}\arrow{d}\arrow{r} & X\arrow[equal]{d}\\
\mathbb{P}(\nu) & S(\nu) \arrow{l}\arrow{r} & X.
\end{tikzcd}
\end{equation}
Here, the left vertical arrow is an isomorphism of fiber bundles and the middle vertical arrow is given by a choice of a section $s$ for $S(\nu)$. (Here we use once again the fact that $\nu$ is trival \cite{MR179792}.)
In \eqref{eq:trivb}, the rows are cofibrant, and the vertical arrows are homotopy equivalences.
Therefore, \eqref{eq:trivb} identifies the pushout in question with the pushout \eqref{eq:binfdesc}.
\end{proof}

Applying this result to the varieties $Z_d:=Y_{\calL_G^{(d)},\calG_G^{(d)}}$ and $X_d:=Y_{\calL^{(d)},\calG^{(d)}}$, we immediately obtain a homotopy equivalence 
 \[
h=h_{\calL,\calG}\colon \tilde{Y}^{\infty}_{\calL,\calG}\to Y^{\infty}_{\calL,\calG},   
 \]
where $\tilde{Y}^{\infty}_{\calL,\calG}$ is the pushout of the diagram
\begin{equation}\label{eq:Ypushout}
\begin{tikzcd}
Y^{\infty}_{\calL_G,\calG_G}\times \mathbb{C}P^{\infty}_{G} & Y^{\infty}_{\calL_G,\calG_G}\arrow{l}\arrow{r} & Y^{\infty}_{\calL',\calG'},
\end{tikzcd}
\end{equation}
in which the left arrow is obtained by putting the basepoint in the missing coordinate, and the right arrow is the colimit of the maps of wonderful compactifications induced by the map $r\colon G^\perp\to V$. 

Let us show that for the Davis--Janiszkiewicz spaces the same recurrence relation is true strictly, not up to homotopy, that is, that 
the Davis--Janiszkiewicz space $DJ_{\calL,\calG}$ coincides with the pushout of the diagram
\begin{equation}\label{eq:binfdesc}
\begin{tikzcd}
DJ_{\calL_G,\calG_G}\times \mathbb{C}P^{\infty}_{G} & DJ_{\calL_G,\calG_G} \arrow{l}\arrow{r} & DJ_{\calL',\calG'} ,
\end{tikzcd}
\end{equation}
where the row maps are obtained by putting the basepoints in the missing coordinates.

Indeed, among nested sets $S\in N(\calL,\calG)$, we may first consider nested sets that do not contain $G$, in which case $S\in N(\calL',\calG')$, and so the corresponding points of $DJ_{\calL,\calG}$ come from $DJ_{\calL',\calG'}$. Otherwise, $G\in S$, in which case for each $H\in S$, the element $H+G$ belongs to $\calG_G$, and 
 \[
G^\perp/(H+G)^{\perp}\cong (G^\perp+H^\perp)/H^\perp\cong (G\cap H)^\perp/H^\perp=V/H^\perp  
 \]
because of minimality of $G$, and therefore for each $H$, the factor 
$\mathbb{C}P^{\infty}_{H}:=\colim_{q} \mathbb{P}(V/H^{\perp})^{\oplus q}$ in $DJ_{\calL,\calG}$ is exactly the same as the factor 
$\mathbb{C}P^{\infty}_{H+G}:=\colim_{q} \mathbb{P}(G^\perp/(H+G)^{\perp})^{\oplus q}$ in $DJ_{\calL_G,\calG_G}$, proving the claim.

To conclude the proof, we note that the following diagram is commutative:
 \[
\begin{tikzcd}[column sep=0.5cm]
Y^{\infty}_{\calL_G,\calG_G}\arrow{r}{}\arrow{rrd}\arrow{dd} & Y^{\infty}_{\calL',\calG'}\arrow{rrd}{}\arrow{dd} & \\
& & Y^{\infty}_{\calL_G,\calG_G}\times \mathbb{C}P^{\infty}_{G}\arrow[swap]{r}{}\arrow{dd} & \tilde{Y}^{\infty}_{\calL,\calG}\arrow{dd} \arrow{dd}\\
\mathbb{P}_{\calL_G,\calG_G} \arrow{r}\arrow{rrd} & \mathbb{P}_{\calL',\calG'} \arrow{rrd} & \\
& & \mathbb{P}_{\calL_G,\calG_G}\times\mathbb{C}P^{\infty}_{G}\arrow{r} & \mathbb{P}_{\calL,\calG}
\end{tikzcd}    
 \]
Indeed, the top square is defined as a pushout diagram, the back and left side squares are commutative, because the top arrows are restrictions of the bottom arrows, and the bottom square is tautologically commutative. The least obvious claim concerns the front square, where the right vertical map is a composition of the homotopy equivalence $h$ and the inclusion of the wonderful compactification. In this case, the commutativity follows from the property of the homotopy equivalence $h$ indicated in Proposition \ref{prop:limblow}. 

The commutativity of this diagram allows us to glue a homotopy equivalence $DJ_{\calL,\calG}\sim Y^{\infty}_{\calL,\calG}$ from the available homotopy equivalences $DJ_{\calL_G,\calG_G}\sim Y^{\infty}_{\calL_G,\calG_G}$ and $DJ_{\calL',\calG'}\sim Y^{\infty}_{\calL',\calG'}$, thus completing the proof.
\end{proof}

\printbibliography
\end{document}